\documentclass[12pt]{article}

\usepackage[latin1]{inputenc}
\usepackage[T1]{fontenc}
\usepackage{amsmath,amssymb,amsthm}

\DeclareMathOperator{\coker}{coker}
 \DeclareMathOperator{\HH}{H}
\DeclareMathOperator{\Proj}{Proj} 
\DeclareMathOperator{\Hom}{Hom}
 \DeclareMathOperator{\Ext}{Ext}
\DeclareMathOperator{\codim}{codim}\DeclareMathOperator{\diam}{diam}


\topmargin -15mm \textwidth 166mm \textheight 230mm \oddsidemargin
0mm \evensidemargin 0mm


\newtheorem{theorem}{Theorem}[section]

\newtheorem{corollary}[theorem]{Corollary}
\newtheorem{definition/corollary}[theorem]{Definition/Corollary}
\newtheorem{definition}[theorem]{Definition}
\newtheorem{example}[theorem]{Example}
\newtheorem{proposition}[theorem]{Proposition}

\newtheorem{remark}[theorem]{Remark}


\usepackage{latexsym}            
\usepackage{amssymb}

\newcommand{\pp}{{\mathbb P}}

\newcommand\sE{{\mathcal E}}
\newcommand\sF{{\mathcal F}}

\newcommand\sI{{\mathcal I}}

\newcommand\sN{{\mathcal N}}
\newcommand\sO{{\mathcal O}}

\newcommand\sS{{\mathcal S}}

\newcommand{\proj}[1]
{ \mathchoice
           { {\mathbb P}^{#1} }
           { {\mathbb P}^{#1} }
           { {\mathbb P}^{#1} }
           { {\mathbb P}^{#1} }
         }




\begin{document}

\title {The Hilbert Scheme of Buchsbaum space curves}
\author{Jan O. Kleppe} 


\date{\ } 

\maketitle 

{\footnotesize \vspace*{-0.40in} \textbf{Abstract.} \noindent We consider the
  Hilbert scheme $\HH(d,g)$ of space curves $C$ with homogeneous ideal
  $I(C):=H_{*}^0(\sI_{C})$ and Rao module $M:=H_{*}^1(\sI_{C})$. By taking
  suitable generizations (deformations to a more general curve) $C'$ of $C$,
  we simplify the minimal free resolution of $I(C)$ by e.g making consecutive
  free summands (ghost-terms) disappear in a free resolution of $I(C')$. Using
  this for Buchsbaum curves of diameter one ($M_v \ne 0$ for only one $v$), we
  establish a one-to-one correspondence between the set $\sS$ of irreducible
  components of $\HH(d,g)$ that contain $(C)$ and a set of minimal 5-tuples
  that specializes in an explicit manner to a 5-tuple of certain graded Betti
  numbers of $C$ related to ghost-terms. Moreover we almost completely (resp.
  completely) determine the graded Betti numbers of all generizations of $C$
  (resp. all generic curves of $\sS$), and we give a specific
  description of the singular locus of the Hilbert scheme of curves of
  diameter at most one. We also prove some semi-continuity results for the
  graded Betti numbers of any space curve under some assumptions.
  \vspace*{-0.12in}
  \thispagestyle{empty} 

  \noindent {\bf AMS Subject Classification.}} {\small 14C05, 14H50, 14M06,
  13D02, 13C40. }

{\footnotesize
\noindent {\bf Keywords}. Hilbert scheme, space curve, Buchsbaum curve, graded
Betti numbers, ghost term, linkage. }
  \vspace*{-0.08in}

\section {Introduction}
The goal of this paper is to give an explicit description of all irreducible
components of the Hilbert scheme $\HH(d,g)$ of space curves
that contain a given Buchsbaum curve. Thus this paper completes the study we
started in \cite{krao} where we only succeeded in some cases (\cite{krao},
Prop. 4.6). Recall that a curve $C$ (equidimensional and locally
Cohen-Macaulay) with sheaf ideal $\sI_C$ is called (arithmetically) Buchsbaum
if the 
Rao module $M:=H_{*}^1(\sI_{C})$ satisfies $(X_0,X_1,X_2,X_3)\cdot M =0$ where
$R = k[X_0,X_1,X_2,X_3]$ is the polynomial ring. Hence if $M_v = 0$ for all
but one $v$, then $C$ is certainly Buchsbaum; we call $C$ a diameter-1 curve
in this case.

In this paper we determine all components $V$ of $\HH(d,g)$ containing a
diameter-1 curve $C$ from the point of view of describing the graded Betti
numbers of the generic curve of $V$ in terms of the graded Betti numbers of
$C$ (Corollary~\ref{cormaxcomp}, Theorem~\ref{Vbetta}). There are 5 graded
Betti numbers of $C$, related to ghost terms if they are non-zero, that play a
very special role and determines for instance the number of components $V$
containing $(C)$ (Proposition~\ref{numberBCM}). Moreover, if $(C)$ is contained
in the closure of a Betti stratum $H(\underline \beta)$, necessarily
irreducible by Proposition~\ref{BettiStrata}, then we determine the set of
graded Betti numbers $\underline \beta$ almost completely
(Theorem~\ref{newmain}, Remark~\ref{remmainthm4}). As a consequence we
describe the singular locus of the Hilbert scheme of curves of diameter at
most one as an explicit union of certain Betti strata, up to closure
(Theorem~\ref{singloc}). To prove such results it is important to understand
which graded Betti number are semi-continuous (Proposition~\ref{newmainprop}).
We also prove a semi-continuity result for the graded Betti numbers for any
space curve under an assumption (Corollary~\ref{semic}). Moreover we need to
find ``all'' generizations of $C$. In \cite{krao} we mainly found the
generizations using some ideas appearing in \cite{MDP1}. In this work we
describe the generization that does not preserve postulation in much more
detail and with a new proof (Proposition~\ref{newmainres}). For the
generization that preserves postulation and reduces $\dim M$ by one, we
correct an inaccuracy in \cite{krao}, Prop. 4.2 (a): the resolution may be
non-minimal in one and only one degree, 
see Remark~\ref{mainres2corr}. All these results, together with those on the
obstructedness and dimension of $\HH(d,g)$ in \cite{krao}, make us understand
the Hilbert scheme of diameter-1 curves.

Thus this paper contributes to solving questions related to the number of
components, irreducibility and smoothness of $\HH(d,g)$, see \cite {A},
\cite{EHM, El}, \cite{Gi, Gu}, \cite{MDP1, MDP2} for some contributions
which are relevant for this paper, and \cite{BM} for a thorough study of
diameter-1 curves.

A part of this work was done during a visit to the Institut Mittag-Leffler
(Djursholm, Sweden) in May 2011, whom I thank for the invitation. I heartily
thank Johannes Kleppe for comments and his contribution, cf.
Proposition~\ref{numberBCM}.
 \vspace*{-0.12in}

\subsection {Notations and terminology} Let $R = k[X_0,X_1,X_2,X_3]$ be a
polynomial ring over an algebraically closed field $k= \overline k$ (of characteristic zero
in the examples) and let $\proj{3}:=\Proj(R)$. A curve $C$ in $\proj{3}$ is an
{\it equidimensional, locally Cohen-Macaulay} (lCM) subscheme of
$\proj{}:=\proj{3}$ of dimension one with sheaf ideal $\sI_C$ and normal sheaf
$\sN_C := \Hom_{\sO_{\proj{}}}(\sI_C,\sO_C)$. If $\sF$ is a coherent
$\sO_{\proj{}}$-Module, we let 
$H_{*}^i(\sF) := \oplus_v H^i(\sF(v))$, $h^i(\sF) := \dim H^i(\sF)$ and
$\chi(\sF) := \Sigma (-1)^i h^i(\sF)$. 
Moreover, $M = M(C)$ is the Hartshorne-Rao module
$H_{*}^1(\sI_{C})$, or just the Rao
module, 
and $I=I(C)$ is the homogeneous ideal $H_{*}^0(\sI_{C})$ of $C$. They are
graded modules over $R$. Note that $M$ is artinian since $C$ is lCM. $C$ is
called ACM (arithmetically CM) if $M=0$. The postulation $\gamma=\gamma_C$
(resp. deficiency $\rho = \rho_C$ and specialization $\sigma = \sigma_C$) of
$C$ is the function defined over the integers by $\gamma(v) = h^0(\sI_C(v))$
(resp. $\rho(v) = h^1(\sI_C(v))$ and $\sigma(v) =
h^{1}(\sO_{C}(v)$)). If $M \ne 0$, let

$ \hspace {2cm} c(C) = \max \{n \arrowvert h^1(\sI_C(n)) \neq 0 \} \ , \quad
b(C)=\min \{n \arrowvert h^1(\sI_C(n)) \neq 0 \} \,, $
\\[3mm]
and let $\diam M := c(C)- b(C)+1 $ be the diameter of $M$ (or of $C$). We say
$C$ has maximal rank if $H^0(\sI_C(c))=0$ where $c =
c(C)$. 
A curve $C$ satisfying $\mathfrak m \cdot M = 0$, $ \mathfrak m =
(X_0,..,X_3)$, is an (arithmetically) Buchsbaum curve, thus {\it diameter-1
  curves} are necessarily
Buchsbaum. 
 
 We say $C$ is {\it unobstructed\/} if the Hilbert scheme (\cite{G}) of space
 curves of degree $d$ and arithmetic genus $g$, $\HH(d,g)$, is smooth at the
 corresponding point $(C)$, otherwise $C$ is {\it obstructed}. The open part
 of $\HH(d,g)$ of {\it smooth connected} space curves is denoted by
 $\HH(d,g)_S$, while $\HH_{\gamma,\rho} = \HH(d,g)_{\gamma,\rho}$ (resp.
 $\HH_{\gamma}$) 
 denotes the subscheme of $\HH(d,g)$ of curves with constant
cohomology, i.e. $\gamma_C$ and $\rho_C$ do not vary with $C$ (resp. constant
postulation $\gamma$), 
cf. \cite{MDP1} for an introduction. Let $V$ be an irreducible subset (resp.
component) of $\HH(d,g)$ containing $(C)$. A curve in a sufficiently small
open subset $U$ of $V$ (small enough so that any curve in $U$ has all the
openness properties that we want to require) is called a {\it generization}
of $C \subseteq \proj{3}$ in $\HH(d,g)$ (resp. a {\it generic} curve of
$\HH(d,g)$). We define generizations in $\HH_{\gamma}$ and $\HH_{\gamma,\rho}$
similarly.
%
 \vspace*{-0.12in}

\section {Background}
 \vspace*{-0.10in}
In this section we review techniques and results which we will
need in this paper. 
 \vspace*{-0.08in}

\subsection {Minimal resolutions and graded Betti numbers}
Let $C$ be a curve in $\proj{3}$. Then the homogeneous ideal $I=I(C)$ has a 
minimal resolution of the following form
\begin{equation} \label{resolu} 0 \rightarrow \oplus_i
  R(-i)^{\beta_{3,i}} \rightarrow \oplus_i R(-i)^{\beta_{2,i}} \rightarrow
  \oplus_i R(-i)^{\beta_{1,i}} \rightarrow I \rightarrow 0 \, .
\end{equation}
The numbers $\beta_{j,i}=\beta_{j,i}(C)$ are the graded Betti numbers of
$I(C)$. We denote the set of all graded Betti numbers of $I(C)$ by $\underline
\beta(C) := \{ \beta_{j,i}(C)\}$. We define the {\it Betti stratum},
$\HH(\underline \beta)$, of
$\HH(d,g)_{\gamma,\rho}$ to consist of all curves $C$ of
$\HH(d,g)_{\gamma,\rho}$ 
satisfying $\beta_{j,i}(C) = \beta_{j,i}$ for every $i,j$.

Now we recall Rao's theorem concerning the form of a minimal resolution
of $I=I(C)$. Let \begin{equation}\label{resoluM} 0 \rightarrow
  L_4 \xrightarrow{\ \sigma} L_3 \rightarrow L_2 \rightarrow L_1
   \xrightarrow{\ \tau} L_0 \rightarrow M \rightarrow 0
\end{equation}
be the minimal resolution of $M = M(C)= H_{*}^1(\sI_C)$ and let $L_j=
\oplus_i R(-i)^{\beta_{j+1,i}(M)}$. Then \eqref{resolu} and
\begin{equation} \label{resoluMI}
  0 \rightarrow L_4 \xrightarrow{\ \sigma \oplus 0} L_3 \oplus F_2
    \longrightarrow F_1 \rightarrow I \rightarrow 0  \ 
\end{equation}
are isomorphic (\cite{R}, Thm. 2.5)! Here the composition of $L_4 \rightarrow
L_3 \oplus F_2$ with the natural projection $L_3 \oplus F_2 \rightarrow F_2 $
is zero. We may write \eqref{resoluMI} as a so-called {\em $E$-resolution} of
$I$ (cf. \cite{MDP1}):
\begin{equation} \label{resoluE} 0 \rightarrow E \oplus F_2 \rightarrow F_1
  \rightarrow I \rightarrow 0 \ , \qquad E:= \coker \sigma \, .
\end{equation}
For a diameter-1 curve $C$ with $r=\dim
H_*^1(\sI_C)=h^1(\sI_C(c))$, we have the free resolution {\small
   \begin{equation} \label{Koszul} 0 \rightarrow R(-c-4)^r
     \xrightarrow{\sigma} R(-c-3)^{4r} \rightarrow R(-c-2)^{6r} \rightarrow
     R(-c-1)^{4r} \rightarrow R(-c)^r \rightarrow M \rightarrow
     0 \end{equation}}which is ``$r$ times'' the Koszul resolution of the
 $R$-module $k \cong R/\mathfrak m$ twisted by $-c$. Hence we may put
 $\oplus_i R(-i)^{\beta_{3,i}}= R(-c-4)^r$ in \eqref{resolu}. If $r=1$ then
 the matrix of $\sigma$ is just the transpose of $(X_0, X_1, X_2, X_3)$. 
 \begin{example} \label{BKM1} There is a 
   curve in $\HH(33,117)_S$
of $\diam M = 1$ with minimal resolution
\begin{equation*}
  0 \rightarrow R(-9) \rightarrow R(-10)^{2} \oplus  R(-9)
  \oplus  R(-8)^{4} \rightarrow 
  R(-9) \oplus  R(-8) \oplus
  R(-7)^{5} \rightarrow I \rightarrow 0 \ , 
  \end{equation*}
  (see \cite{BKM} or \cite{W2}). If we compare it to the Rao form
  \eqref{resoluMI}, we see that $F_2= R(-10)^{2} \oplus R(-9)$ and that $ 0
  \rightarrow L_4=R(-9) \rightarrow L_3 = R(-8)^{4}$ is the leftmost part in
  the minimal resolution of $M$. Note that $F_2$ and $L_4$ have the common
  free summand $R(-9)$. A repeated summand in two consecutive terms in
  the minimal resolution \eqref{resolu} will be called a {\rm ghost term}. 
Also $F_1$ and $F_2$ have $R(-9)$ as a ghost
  term. 
 \end{example}  

 \begin {definition} \ The Rao module $M=M(C)$ admits ``a Buchsbaum
   component'' $ M_{[t]}$ if  \\[2mm]
   \centerline {$M \simeq M' \oplus M_{[t]}$} \\[2mm] as graded $R$-{\it
     modules} where $ M_{[t]}$ is the graded R-module $k$ supported in degree
   $t$ ($ M_{[t]} \cong k(-t)$). 
\end{definition}

\begin{remark} Suppose $M=M(C)$ admits a Buchsbaum component, $M \simeq M'
  \oplus M_{[t]}$.

  {\rm (a)} If $ M'$ is a direct sum of other Buchsbaum components of possibly
  various degrees (resp. of the same degree $t$, i.e. $M \simeq
  M_{[t]}^r$), then $C$ is a Buchsbaum curve (resp. of diameter one).

  {\rm (b)} Buchsbaum curves are only a special class of curves having
  Buchsbaum components. Indeed every curve obtained from Liaison addition
  where one of the curves is Buchsbaum, has a Buchsbaum component up to a
  possible twist (see \cite{MIG} for the notion of Liaison addition).
\end{remark}

If $M \simeq M' \oplus M_{[t]}$ and if we denote $(\sigma',\sigma_{[t]}) :=
\left(\begin{smallmatrix} \sigma' & 0 \\ 0 &
    \sigma_{[t]} \end{smallmatrix}\right)$,
then $M$ has the minimal resolution: {\small
  \begin{equation} \label{MplussKosz} 0 \rightarrow P_4 \oplus
    R(-t-4)\xrightarrow{(\sigma',\sigma_{[t]})} P_3 \oplus R(-t-3)^{4}
    \rightarrow P_2 \oplus R(-t-2)^{6} \rightarrow ... \rightarrow P_0 \oplus
    R(-t) \rightarrow M \rightarrow 0 \end{equation} } where $ 0 \rightarrow
P_4 \xrightarrow{\sigma'} P_3 \rightarrow P_2 \xrightarrow{\tau_2} P_1
\xrightarrow{\tau_1} P_0 \rightarrow M' \rightarrow 0$ is a minimal resolution
of $M'$ and {\small
  \begin{equation} \label{Kosz} 0 \rightarrow R(-t-4)
    \xrightarrow{\sigma_{[t]}} R(-t-3)^{4} \rightarrow R(-t-2)^{6} \rightarrow
    R(-t-1)^{4} \xrightarrow{\tau_{[t]}} R(-t) \rightarrow M_{[t]} \rightarrow
    0 \end{equation} } is the Koszul resolution of the $R$-module $R/\mathfrak
m(-t)$. Note that $\sigma_{[t]}=(X_0, X_1, X_2, X_3)^{tr}=\tau_{[t]}^{tr}$.
Combining with Rao's theorem
concerning \eqref{resoluMI}, we get the following minimal resolution of $I$:
\begin{equation}  \label{resoluMt}
  0 \rightarrow P_4 \oplus R(-t-4)  \xrightarrow{(\sigma',\sigma_{[t]}) \oplus
    0} P_3
  \oplus R(-t-3)^{4} \oplus F_2
  \rightarrow F_1 \rightarrow I \rightarrow 0 \, . \ 
  \end{equation}
  It was shown in \cite{krao} that certain Betti number were related to whether
  $(C)$ sits in the intersection of different irreducible components of
  $\HH(d,g)$, and hence to whether $C$ is obstructed, or not. To define them,
  we write $F_i$ as
 \begin{equation} \label{bettiMI} F_2 \cong Q_2 \oplus R(-t-4)^{b _1} \oplus
   R(-t)^{b _2} \ , \qquad F_1 \cong Q_1 \oplus R(-t-4)^{a_1} \oplus
   R(-t)^{a_2}
  \end{equation}
  where $Q_i$, for $i = 1,2$ are supposed to contain no free direct summand of
  degree $t$ and $t+4$.
\begin{definition} The 4-tuple associated to a curve $C$ with 
  Buchsbaum component $ M_{[t]}$ is $(a_1,a_2,b_1,b_2)$.
  Note that $(a_1,a_2)=(\beta_{1,t+4},\beta_{1,t})$ are the $1^{st}$ graded
  Betti numbers of $I=I(C)$. 
\end{definition} 
\begin{remark} \ For a Buchsbaum curve of diameter one, we have $M(C) \simeq
  M_{[t]}^r$ and $t=c$. Then $(a_1,a_2,b_1,b_2) =(\beta_{1,c+4},\beta_{1,c},
  \beta_{2,c+4}, \beta_{2,c})$ and $r=\beta_{3,c+4}$ are the graded Betti
  numbers of $I(C)$ in degree $c+4$ and $c$. In this case, if we want to have
  $r$ attached, we work with the 5-tuple $(a_1,a_2,b_1,b_2, r)$. Note that
  this 5-tuple was denoted by $(r,a_1,a_2,b_1,b_2)$ in \cite{krao}.
\end{remark}
\subsection {Linkage}
We will need the notion of linkage and how we can find the minimal resolution
of a linked curve (cf. \cite{PS} and see \cite{MIG} for an introduction to
linkage or liaison). 
Considering $ {\mathcal
  I}_{C/Y}:= {\mathcal I}_{C}/{\mathcal I}_Y$ as the sheaf ideal of $C$ in
$Y$, we define
\begin{definition}\label{deflink}  Two curves $C$ and $D$  in $\proj{3}$ are
  said to be (algebraically) {\it CI-linked} if there exists a complete
  intersection curve (a CI) $Y$ such that $${\mathcal I}_{C}/{\mathcal I}_Y
  \cong {\mathcal H}om_{{\mathcal O}_{\pp}}({\mathcal O}_{D},{\mathcal O}_Y)\
  \ {\rm \ and \ \ } {\mathcal I}_{D}/{\mathcal I}_Y \cong {\mathcal
    H}om_{{\mathcal O}_{\pp}}({\mathcal O}_{C},{\mathcal O}_Y) \, . $$
\end{definition}
Suppose that $Y$ is a complete intersection of two surfaces of degrees $f$ and
$g$ (a CI of type $(f,g)$) containing $C$. Since the dualizing sheaf,
$\omega_Y$, of $Y$ satisfies $\omega_Y \cong \sO_Y(f+g-4)$, we get
\begin{equation} \label{linkdual}
{\mathcal I}_{C/Y} \cong \omega_{D}(4-f-g) \qquad {\rm and}
  \qquad {\mathcal I}_{D/Y} \cong \omega_{C}(4-f-g) 
\end{equation}
%
from the definition. By \cite{R} the module $M(C)$ is a biliaison (linking
twice several times) invariant, up to twist. Moreover, using \eqref{linkdual}
and the fact that $ \omega_{D} \cong {\sE}xt^2(\sO_{D}, \sO_{\pp}(-4))$, hence
that $I(C)/I(Y) \cong \Ext^1(I_D(f+g), R)$,
one knows how to find a resolution of $I(D)$ in terms of the
resolution of $I(C)$ and some part of the resolution of the dual of $M(C)$. 
Indeed using the $E$-resolution of $I(C)$, there exists vertical
morphisms
\begin{gather*} \label{bigdia}
  0 \longrightarrow R(-f-g) \rightarrow R(-f) \oplus R(-g) \rightarrow
  I(Y)   \longrightarrow 0 \\
   \hspace{16pt} \downarrow \hspace{32pt} \circ \hspace{30pt} \downarrow
  \hspace{30pt} \circ \hspace{22pt} \downarrow \\
   0 \longrightarrow \quad E \oplus F_2   \hspace{18pt}  \longrightarrow
   \hspace{18pt} F_1  \hspace{15pt} \longrightarrow  \hspace{15pt} I(C)
   \longrightarrow 0 
\end{gather*}
The mapping cone construction yields a resolution of $I(C)/I(Y)$. Taking
$R$-duals, $ {\Hom_R}(-,R)$, of it, we get
\begin{equation} \label{resoluLink} 0 \rightarrow F_1^{\vee} \rightarrow
  E^{\vee} \oplus F_2^{\vee} \oplus R(f) \oplus R(g)  \to I(D)(f+g)
  \to 0 \, .
\end{equation} 
Note that $0 \rightarrow L_0^{\vee} \xrightarrow{\tau^{\vee}} L_1^{\vee}
\rightarrow L_2^{\vee} \rightarrow E^{\vee} \to 0$ is a free resolution of $
E^{\vee}$ because the $R$-dual sequence of \eqref{resoluM} is a resolution of
$\Ext_R^4(M , R)$. Letting $G_1:= L_2^{\vee}(-f-g) \oplus F_2^{\vee}(-f-g)
\oplus R(-g) \oplus R(-f)$, the mapping cone construction yields the following
$R$-free resolution:
\begin{equation} \label{resoluFreeLink} 0 \rightarrow L_0^{\vee}(-f-g)
  \xrightarrow{\tau^{\vee} \oplus 0} L_1^{\vee}(-f-g) \oplus F_1^{\vee}(-f-g)
  \rightarrow G_1 \to I(D) \rightarrow 0 \
\end{equation}
%
If we need to find a free resolution of the homogeneous ideal of  a curve $X$
linked to $D$, using a CI $Z$ of type $(f',g')$ (so $X$ and $C$ are bilinked),
we use \eqref{resoluLink} (and not \eqref{resoluFreeLink}) and the mapping
cone construction as in the big diagram above, 
to find a resolution of $I(D)/I(Z)(f'+g')$. Taking $R$-duals we get a free
resolution of $R/I(X)$ (cf. \cite{MIG}). We illustrate this by an example:
\begin{example} \label{ex1} 
  If $C$ is a disjoint union of two lines, then it is easy to see that
 \begin{equation*} 0 \rightarrow R(-4) \xrightarrow{\sigma} R(-3)^{4}
    \rightarrow R(-2)^4 \rightarrow I(C) \rightarrow 0 
    \end{equation*} 
    is the minimal resolution, having \ $0 \to E \rightarrow R(-2)^4
    \rightarrow I(C) \rightarrow 0 $ as its $E$-resolution (cf.
    \eqref{resoluE}). We link twice, first via a CI of type $(4,2)$ to get a
    curve $D$ with an exact sequence (cf. \eqref{resoluLink}) $$0 \rightarrow
    R(2)^3 \rightarrow E^{\vee} \oplus R(4) \to I(D)(6) \to 0 \, ,$$ then we
    link via a CI \ $Z$ of type $(4,6)$ to get a curve $X$ in $\HH(18,39)$ with
    $E$-resolution: $$0 \to E(-4) \oplus R(-8) \rightarrow R(-6)^4 \oplus
    R(-4) \rightarrow I(X) \rightarrow 0\, ,$$ which really is the $R$-dual
    sequence of the resolution of $I(D)/I(Z)(10)$ found by the mapping cone
    construction. Note that we use a common hypersurface of degree $4$ in both
    linkages. The minimal resolution of $I(X)$ is
    \begin{equation*} 0 \rightarrow R(-8) \rightarrow R(-8) \oplus R(-7)^{4}
      \rightarrow R(-6)^4 \oplus R(-4) \rightarrow I(X) \rightarrow 0 \, .
    \end{equation*} 
    One should compare the resolution with the Rao form
    \eqref{resoluMI}. Note that $R(-8)$ is a ghost term.
\end{example}

\subsection {Deformations}
In \cite{krao} we proved that we can cancel repeated free consecutive
summands (ghost terms) in \eqref{resoluMI} using deformations:

\begin{theorem}\label{mainres} \quad
  Let $C \subseteq \proj{3}$ be any curve with homogeneous ideal $I(C)$ and
  Rao module $M(C)$ and minimal free resolutions as in \eqref{resoluM} and
  \eqref{resoluMI}. If $F_1$ and $F_2$ have a common free summand; 
  $F_2 = F_2' \oplus R(-i)$, $F_1 = F_1' \oplus R(-i)$, 
  then there is a generization $C'$ of $C$ in $\HH(d,g)$ 
  with constant postulation and constant Rao module, and with minimal
  resolution
\begin{equation*} 
  0 \rightarrow L_4  \xrightarrow{ \ \sigma \oplus 0 \ } L_3 \oplus F_2'
    \rightarrow F_1' \rightarrow I(C') \rightarrow 0 \, . \ 
\end{equation*}
\end{theorem}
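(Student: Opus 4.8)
The plan is to exhibit $C'$ explicitly as the general member of a one-parameter deformation that promotes the ghost summand to a split unit in the resolution. Write the middle map of \eqref{resoluMI} as $\phi\colon L_3\oplus F_2\to F_1$, and let $e\colon L_3\oplus F_2\to F_1$ be the degree-zero map equal to the identity between the two copies $R(-i)\subseteq F_2$ and $R(-i)\subseteq F_1$ and zero on all other summands. Over $R_T:=R\otimes_k k[\lambda]$ I would form
\begin{equation*}
0\to L_4\otimes R_T\xrightarrow{\ \sigma\oplus 0\ }(L_3\oplus F_2)\otimes R_T\xrightarrow{\ \phi+\lambda e\ }F_1\otimes R_T\to 0,
\end{equation*}
and put $I_\lambda:=\coker(\phi+\lambda e)$. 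Since $\sigma\oplus 0$ has image in $L_3$ while $e$ is supported on $F_2\to F_1$, we have $e\circ(\sigma\oplus 0)=0$; hence $(\phi+\lambda e)\circ(\sigma\oplus 0)=0$ and we obtain a complex for every value of $\lambda$, reducing at $\lambda=0$ to the minimal resolution \eqref{resoluMI} of $I(C)$.

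First I would show this complex is acyclic for general $\lambda$. By the Buchsbaum--Eisenbud acyclicity criterion, exactness is equivalent to finitely many conditions on the ranks of the two differentials and on the grades of the ideals of their maximal minors; each is an open condition on $\lambda$, and all hold at $\lambda=0$, where the complex resolves $I(C)$. Thus they hold on a cofinite open $U\ni 0$. For $\lambda\in U$ the complex is an $R$-free resolution of $I_\lambda$ built from $\lambda$-independent free modules, so every graded component of $R/I_\lambda$ has dimension independent of $\lambda\in U$; over the reduced base $U$ this constancy gives flatness. Moreover $\operatorname{pd} I_\lambda\le 2$, so Auslander--Buchsbaum gives $\depth R/I_\lambda\ge 1$ and $I_\lambda$ is saturated. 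Since $I_0=I(C)$ and $\HH(d,g)$ is an open subscheme of the Hilbert scheme containing $(C)$, the resulting classifying morphism from $U$ to the Hilbert scheme sends a cofinite open subset into $\HH(d,g)$. Denoting by $C_\lambda$ the corresponding curve, $(C_\lambda)$ is by construction a generization of $(C)$.

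Next I would read off the minimal resolution of the general member. For $\lambda\ne 0$ the entry $R(-i)\subseteq F_2\to R(-i)\subseteq F_1$ of $\phi+\lambda e$ is the unit $\lambda$, so Gaussian elimination splits off the trivial subcomplex $R(-i)\xrightarrow{\lambda}R(-i)$ and leaves, with the same homology,
\begin{equation*}
0\to L_4\xrightarrow{\ \sigma\oplus 0\ }L_3\oplus F_2'\to F_1'\to I_\lambda\to 0.
\end{equation*}
The leftmost map is unchanged, because the coefficient of the cancelled generator in $\sigma\oplus 0$ was already zero, so this sequence is again of the Rao form \eqref{resoluMI}; by Rao's theorem \cite{R} its module part is the minimal resolution of $M$, whence $M(C_\lambda)\cong M(C)$, while the $\lambda$-independence of the free modules gives constant postulation. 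It is moreover minimal: the only degree-zero entry we introduced is the unit we removed, and the Schur-complement correction to the surviving differential is a product of two positive-degree entries of the original minimal map, hence lies in $\mathfrak m^2$. Thus $\underline\beta(C_\lambda)$ is obtained from $\underline\beta(C)$ by deleting the common $R(-i)$, and taking $C'=C_\lambda$ for general $\lambda\in U$ proves the statement.

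The hard part is the acyclicity-and-flatness step of the second paragraph, together with the verification that the general fibre is a genuine equidimensional, locally Cohen--Macaulay curve rather than merely a one-dimensional scheme with the correct Hilbert polynomial; once $I_\lambda$ is known to be the saturated ideal of such a curve with unchanged $\sigma$ and $E=\coker\sigma$, the cancellation and the invariance of $M$ and $\gamma$ are formal. I would secure the lCM and equidimensionality either by openness of these loci in the flat family or, more robustly, directly from the depth estimates furnished by the length-$\le 2$ resolution of $I_\lambda$.
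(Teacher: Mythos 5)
Your proposal is correct and follows essentially the same route as the paper's (which delegates the details to \cite{krao}, Thm.\;4.1, and to the extension of the argument in \cite{MDP1} for $M \cong k(-c)$): one places an indeterminate $\lambda$ of degree zero in the ghost-term entry $R(-i) \subseteq F_2 \to R(-i) \subseteq F_1$, notes the complex property is automatic since the projection of $\im(\sigma\oplus 0)$ to $F_2$ vanishes, obtains a flat family with constant postulation and Rao module, and splits off the unit for $\lambda \ne 0$. Your Buchsbaum--Eisenbud acyclicity/flatness paragraph is precisely the content of the ``key lemma'' (\cite{krao}, Lem.\;4.8) that the paper invokes, so the two arguments coincide in substance.
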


The proof is straightforward once we have proven a key lemma, and we refer to
\cite{krao}, Thm. 4.1 for the details. We remark that the proof of the case $M
\cong k(-c)$ in \cite{MDP1} extends to get Theorem~\ref{mainres}.

\begin{corollary} \label{mainrecor} \ Let $C$ be any curve and let $ \{
  \beta_{j,i}\}$ (resp. $ \{ \beta_{j,i}(M)\}$) be the graded Betti numbers of
  $I(C)$ (resp. $M(C)$, whence  $L_3= \oplus_i R(-i)^{\beta_{4,i}(M)}$). If
  $\beta_{1,i} \cdot (\beta_{2,i}-\beta_{4,i}(M)) \neq 
  0$ for some $i$, then there is a generization $C'$ of $C$ in $\HH(d,g)$ with
  constant postulation and Rao module whose graded Betti
  numbers  $ \{ \beta'_{j,i}\}$ satisfy: \\[-2mm]

  \qquad \qquad \,$\beta'_{1,i} = \beta_{1,i}-1$\,, \quad $\beta'_{1,j} =
  \beta_{1,j}$ \ for \ $j \ne i$

  {\rm (Qi)} \qquad $\beta'_{2,i} = \beta_{2,i}-1$\,, \quad $\beta'_{2,j} =
  \beta_{2,j}$ \ for \ $j \ne i$\ , \quad and \quad \,$\beta'_{3,j} =
  \beta_{3,j}$ \ for \ every $j$. \\[2mm]
  In particular if $C$ is a generic curve of $\HH(d,g)$, then $\beta_{1,i}
  \cdot (\beta_{2,i}-\beta_{4,i}(M))=0$ for every $i$.
\end{corollary}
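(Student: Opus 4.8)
The plan is to deduce the statement from Theorem~\ref{mainres} by translating both its hypothesis and its conclusion into the graded Betti numbers of \eqref{resolu}. First I would recall that, by Rao's theorem, the resolution \eqref{resoluMI} is isomorphic to the minimal resolution \eqref{resolu}; comparing the three nonzero free modules term by term therefore gives $F_1 \cong \oplus_i R(-i)^{\beta_{1,i}}$, $L_3 \oplus F_2 \cong \oplus_i R(-i)^{\beta_{2,i}}$ and $L_4 \cong \oplus_i R(-i)^{\beta_{3,i}}$. Since $L_3 = \oplus_i R(-i)^{\beta_{4,i}(M)}$ is a direct summand of the middle module, I may cancel it to obtain $F_2 \cong \oplus_i R(-i)^{\beta_{2,i}-\beta_{4,i}(M)}$; in particular $\beta_{2,i}\ge\beta_{4,i}(M)$ for every $i$.

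With these identifications the hypothesis of the Corollary is exactly the hypothesis of Theorem~\ref{mainres}: the integer $\beta_{1,i}$ is the multiplicity of $R(-i)$ as a free summand of $F_1$, while $\beta_{2,i}-\beta_{4,i}(M)$ is its multiplicity as a free summand of $F_2$, so $\beta_{1,i}\cdot(\beta_{2,i}-\beta_{4,i}(M))\ne 0$ holds precisely when $R(-i)$ is a common free summand of $F_1$ and $F_2$. Writing $F_1=F_1'\oplus R(-i)$ and $F_2=F_2'\oplus R(-i)$, Theorem~\ref{mainres} then produces a generization $C'$ of $C$ in $\HH(d,g)$, with constant postulation and constant Rao module, whose minimal resolution is $0 \to L_4 \to L_3\oplus F_2' \to F_1' \to I(C') \to 0$.

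It remains to read off the Betti numbers of this last resolution, which is again of Rao form for the unchanged module $M$ and hence minimal. The rightmost module lost one copy of $R(-i)$, giving $\beta'_{1,i}=\beta_{1,i}-1$ and $\beta'_{1,j}=\beta_{1,j}$ for $j\ne i$; the middle module $L_3\oplus F_2'$ also lost exactly one copy of $R(-i)$, coming from the $F_2$-summand and not from the untouched $L_3$, giving $\beta'_{2,i}=\beta_{2,i}-1$ and $\beta'_{2,j}=\beta_{2,j}$ for $j\ne i$; and $L_4$ is unchanged, so $\beta'_{3,j}=\beta_{3,j}$ for all $j$. This is precisely (Qi).

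For the final assertion I would argue by contradiction using upper semi-continuity of graded Betti numbers. If $C$ is a generic curve of $\HH(d,g)$ and $\beta_{1,i}\cdot(\beta_{2,i}-\beta_{4,i}(M))\ne 0$ for some $i$, the construction above yields a generization $C'$, lying in the same irreducible component, with $\beta'_{1,i}=\beta_{1,i}-1<\beta_{1,i}$; but a generic curve already realizes the minimal, hence constant, graded Betti numbers on an open dense subset of its component, so no generization can strictly decrease a Betti number, a contradiction. I expect the only delicate point to be the bookkeeping of the second and third paragraphs, namely justifying that $L_3$ cancels cleanly from the middle term and that the $R(-i)$ removed by Theorem~\ref{mainres} is the $F_2$-summand rather than part of $L_3$; both facts rest on the minimality of \eqref{resoluMI} together with $L_3$ being left fixed by the construction of Theorem~\ref{mainres}.
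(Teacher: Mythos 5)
Your first three paragraphs are correct and amount to what the paper treats as immediate: by Rao's theorem and Krull--Schmidt, the hypothesis $\beta_{1,i}\cdot(\beta_{2,i}-\beta_{4,i}(M))\ne 0$ says exactly that $R(-i)$ is a common free summand of $F_1$ and $F_2$ in \eqref{resoluMI}, and Theorem~\ref{mainres} then produces the constant-postulation, constant-Rao-module generization whose minimal resolution realizes (Qi). The paper does not even spell this out; its printed proof concerns only the final assertion.

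The gap is in your last paragraph. You justify the final assertion by ``upper semi-continuity of graded Betti numbers'' on $\HH(d,g)$ and by the claim that a generic curve realizes the minimal graded Betti numbers of its component. Both of these, taken as global statements, are false, and the paper's own Example~\ref{ex2} is the counterexample: the curve $X$ there lies in a component whose generic curve $\tilde X$ has $\beta_{1,5}(\tilde X)=4$ and $\beta_{2,6}(\tilde X)=2$, while $\beta_{1,5}(X)=\beta_{2,6}(X)=0$, so the generic Betti numbers are not minimal and semi-continuity fails on $\HH(d,g)$. Semi-continuity of the $\beta_{j,i}$ holds only inside $\HH_{\gamma}$, i.e.\ under constant postulation (or when ${_0\!\Hom_R}(I(C),M(C))=0$); this is Corollary~\ref{semic}. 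The paper's proof therefore proceeds differently at exactly this point: by semi-continuity of the postulation, a generic curve $C$ lies in an open irreducible subset $U$ of $\HH(d,g)$ with constant postulation; the generization $C'$ furnished by (Qi) preserves postulation and hence lies in $U$; and inside $U$ the number $\beta_{1,i}$ is upper semi-continuous, so it cannot strictly drop under a generization of a generic curve --- the desired contradiction. Your argument is repaired by precisely this restriction to $U$, which is available to you because Theorem~\ref{mainres} keeps the postulation constant; but as written, the step ``no generization can strictly decrease a Betti number'' rests on a principle the paper itself shows to be false.
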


\begin{proof} By the  semi-continuity of the postulation, a generic curve
  belongs to some open irreducible subset $U$ of $\HH(d,g)$ with {\it
    constant postulation}. It follows that $\beta_{1,i}$ is
  semi-continuous in $U$, cf.\;the proof of Corollary~\ref{semic} for a
  discussion. Hence also the final statement of the corollary is immediate.
\end{proof}
In \cite{krao}, Prop. 4.2 (a) we also proved the following result.

\begin{proposition} \label{mainres1} Let $C \subseteq \proj{3}$ be a curve for
  which there is an isomorphism $M(C) \cong M' \oplus M_{[t]}$ as graded
  $R$-{\it modules} such that the minimal resolution \eqref{resoluMt} of
$I(C)$ takes the form:
  \begin{equation} \label{resoluMI4} 0 \rightarrow P_4 \oplus R(-t-4)
    \xrightarrow{(\sigma', \sigma_{[t]}) \oplus 0 \oplus 0} P_3 \oplus
    R(-t-3)^{4} \oplus Q_2 \oplus R(-t-4)   \xrightarrow{\ \beta} F_1
    \rightarrow I(C) 
    \rightarrow 0 \, . \
\end{equation}
Then there is a generization $C' \subseteq \proj{3}$ of $C \subseteq \proj{3}$
in $\HH(d,g)$ with constant postulation such that $I(C')$ has a free
resolution of the following form:
\begin{equation} \label{resoluMI55}
  0 \rightarrow P_4 
  \xrightarrow{\sigma' \oplus 0 \oplus 0} P_3 \oplus R(-t-3)^{4}
  \oplus  Q_2  \rightarrow F_1 \rightarrow I(C') \rightarrow 0 \, , \ 
\end{equation}
and such that $M(C') \cong M'$ as graded $R$-modules. The
resolution is minimal except possibly in degree $t+3$ in which case some of
the summands of $R(-t-3)^{4}$ may be cancelled against free summands of
$F_1$.
\end{proposition}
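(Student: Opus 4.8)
The plan is to realize the passage from \eqref{resoluMI4} to \eqref{resoluMI55} as a ghost-term cancellation in the spirit of Theorem~\ref{mainres}, applied to the repeated summand $R(-t-4)$ that occurs in the two leftmost (consecutive) terms of \eqref{resoluMI4}: once inside $L_4 = P_4 \oplus R(-t-4)$ and once inside $F_2 = Q_2 \oplus R(-t-4)$. The essential difference from Theorem~\ref{mainres} is that one copy sits in the module-resolution part $L_4$, where it is the left end of the Koszul resolution \eqref{Kosz} of $M_{[t]}$; cancelling it will therefore not preserve the Rao module but will strip off precisely this Koszul summand, leaving $M(C') \cong M'$ and reducing $\dim_k M$ by one. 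Concretely, I would deform the first differential of \eqref{resoluMI4} so that its component $R(-t-4) \to R(-t-4)$ from the $L_4$-copy to the $F_2$-copy (which is $0$ in the minimal resolution) becomes $s\cdot\mathrm{id}$, and invoke the key lemma behind Theorem~\ref{mainres} (cf.\ \cite{krao}, Thm.~4.1) to obtain a flat one-parameter family with special fibre $C$ and generic fibre the sought curve $C'$.

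First I would check that the family remains in $\HH(d,g)$ with constant postulation. As the two cancelled copies of $R(-t-4)$ lie in consecutive homological degrees, they enter with opposite signs in the alternating sum computing $\dim I_v$; hence $\gamma_{C'} = \gamma_C$ for every $v$, the Hilbert polynomial of $R/I$ is unchanged, and $C'$ is a genuine generization in $\HH_\gamma$. Since the summand deleted from $L_4$ is the top of the Koszul resolution of $M_{[t]}$, the module part of the resolution collapses to that of $M'$, giving $M(C') \cong M'$; the drop of $h^1$ by one in degree $t$ is consistent with semicontinuity.

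The heart of the matter is that deforming the $R(-t-4) \to R(-t-4)$ entry to $s\cdot\mathrm{id}$ alone violates the complex condition. Writing $\beta_{[t]}\colon R(-t-4) \subset F_2 \to F_1$ for the corresponding restriction of $\beta$ and $\sigma_{[t]} = (X_0,X_1,X_2,X_3)^{tr}$ for the Koszul map $R(-t-4)\subset L_4 \to R(-t-3)^4$, the condition that $\beta$ composed with the first differential vanish reads $\beta|_{R(-t-3)^4}\circ\sigma_{[t]} + s\,\beta_{[t]} = 0$, which fails for $s \neq 0$ because $\beta|_{R(-t-3)^4}\circ\sigma_{[t]} = 0$ while $\beta_{[t]} \neq 0$. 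I would therefore solve simultaneously for a compensating deformation $\beta|_{R(-t-3)^4}(s) = \beta|_{R(-t-3)^4} + s\cdot\delta$ with $\beta|_{R(-t-3)^4}(s)\circ\sigma_{[t]} = -s\,\beta_{[t]}$; this is solvable since $\beta_{[t]}$ has entries in $\mathfrak m$. Gaussian elimination on the generic fibre then cancels the two copies of $R(-t-4)$, leaves the first differential equal to $\sigma' \oplus 0 \oplus 0$ and the blocks $\beta|_{P_3}$, $\beta|_{Q_2}$ untouched (hence minimal), and produces exactly \eqref{resoluMI55}.

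The main obstacle, and the origin of the correction to \cite{krao}, Prop.~4.2(a), is the minimality of the surviving map $R(-t-3)^4 \to F_1$. By the degree bookkeeping above, the correction term $\delta$ is a degree-$(t+3)$ map into $F_1$, so it can be a unit only against a summand of $F_1$ in degree $t+3$, that is, only when $\beta_{1,t+3}(C) \neq 0$. Since $\sigma' \oplus 0 \oplus 0$ is visibly minimal and the $P_3$- and $Q_2$-blocks of $\beta$ are unchanged, no unit entry can arise in any other degree; hence minimality of \eqref{resoluMI55} can fail only in degree $t+3$, where some copies of $R(-t-3)$ may cancel against free summands of $F_1$, precisely as stated. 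The delicate point to be carried out is to show that $\delta$ is generically a unit whenever such a degree-$(t+3)$ summand of $F_1$ is present, and that this phenomenon is confined to that single block.
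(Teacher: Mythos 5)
Your proposal is correct and follows essentially the same route as the paper's proof: you replace the zero entry $R(-t-4)\rightarrow R(-t-4)$ of the first differential by a parameter, compensate in the $R(-t-3)^4\rightarrow F_1$ block using that minimality forces the entries of $\beta_{[t]}$ to lie in $\mathfrak m$ (the paper writes $y_j=\sum_i a_{j,i}X_i$ and replaces $h_{j,i}$ by $h_{j,i}-\lambda a_{j,i}$), and then invoke the key lemma of \cite{krao} (Lem.\;4.8, behind Theorem~\ref{mainres}) to get the flat family and the generization. Your degree bookkeeping showing that a unit correction can only hit degree-$(t+3)$ summands of $F_1$ is exactly the content of Remark~\ref{mainres2corr}; note only that for the statement as given you need not show the correction \emph{is} generically a unit, merely that non-minimality is confined to that degree.
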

\begin{proof}[Proof (the main step)] 
  We replace the $0$-coordinate in the matrix of $(\sigma', \sigma_{[t]})
  \oplus 0 \oplus 0$ that corresponds to $ R(-t-4) \rightarrow R(-t-4)$, by
  some indeterminate $\lambda$ of degree zero (as in \cite{MDP1}, page 189).
  To get a complex in \eqref{resoluMI4}, we change the four columns
  $\{h_{j,0},h_{j,1},h_{j,2},h_{j,3}\}$ in the matrix of $\beta$,
  corresponding to the map $R(-t-3)^4 \rightarrow F_1$, as follows. Look at
  the column $\{y_j\}$ of the map $R(-t-4) \rightarrow F_1$ induced by
  $\beta$, and put $y_j= \sum_{i=0}^3 a_{j,i} X_i$ (such $a_{j,i} \in k$
  exist, and exactly here we use that the resolution is minimal because we
  need $y_j=0$ when $y_j \in k$). If we replace the four columns above by
  $\{h_{j,0}-\lambda \cdot a_{j,0}, h_{j,1}- \lambda \cdot a_{j,1},
  h_{j,2}-\lambda \cdot a_{j,2}, h_{j,3}-\lambda \cdot a_{j,3}\}$, leaving the
  rest of $\beta$ unchanged, we get that the changed sequence
  \eqref{resoluMI4} defines a complex, and we conclude by e.g. \cite{krao},
  Lem. 4.8.
\end{proof}

\begin{remark} \label{mainres2corr} In \cite{krao}, Prop. 4.2 (a) the
  resolution \eqref{resoluMI55} was claimed to be minimal. The proof of
  \cite{krao}, Prop. 4.2 (a) only supports the minimality in degrees $\neq
  t+3$, leaving the possibility of some of the summands of $R(-t-3)^{4}$ to be
  cancelled against corresponding summands of $F_1$. This explains why we in
  Proposition~\ref{mainres1} have to correct the conclusion of \cite{krao},
  Prop. 4.2 (a). Only the mentioned result of \cite{krao} needs a correction.
  Moreover if $F_1$ does not contain $R$-free summands of the form $R(-t-4)$,
  then it is not necessary to assume that \eqref{resoluMI4} is minimal (cf.
  the proof above and note that one may show \cite{krao}, Lem. 4.8 for
  possibly non-minimal resolutions). The final sentence of
  Proposition~\ref{mainres1} requires, however, that \eqref{resoluMI4} is
  minimal.
\end{remark}

\begin{corollary} \label{mainres12} Let $M(C) \cong M' \oplus M_{[t]}$ as
  graded $R$-modules and let $(a_1,a_2,b_1,b_2)$ be the corresponding 4-tuple.
  If $b_1 \neq 0$, then there is a generization $C'$ of $C$ in $\HH(d,g)$ with
  constant postulation and $M'$ whose 4-tuple is \\[2mm] {\rm (P1)}
  \hspace{4cm} $(a_1,a_2,b_1-1,b_2)\, .$ \\[2mm] Indeed for $i \in \{1,2\}$,
  $h^i(\sI_{C'}(v)) = h^i(\sI_{C}(v))$ for $v \neq t$ and $h^i(\sI_{C'}(t)) =
  h^i(\sI_{C}(t))-1$.
\end{corollary}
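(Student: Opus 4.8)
The plan is to deduce the corollary directly from Proposition~\ref{mainres1} after a harmless regrouping of the free summands of $F_2$, and then to read off the cohomological conclusion from the resulting description of $M(C')$ together with the constancy of the postulation. Since $b_1 \neq 0$, the description \eqref{bettiMI} lets me write $F_2 = \tilde Q_2 \oplus R(-t-4)$ with $\tilde Q_2 := Q_2 \oplus R(-t-4)^{b_1-1} \oplus R(-t)^{b_2}$. This regrouping is purely formal and does not disturb minimality, so the minimal resolution \eqref{resoluMt} of $I(C)$ is exactly of the shape \eqref{resoluMI4} with $\tilde Q_2$ playing the role of $Q_2$. Because $F_1$ may contain summands $R(-t-4)^{a_1}$, I keep the minimality hypothesis of Proposition~\ref{mainres1}, which is available since \eqref{resoluMt} is the minimal resolution of $I(C)$ (cf.\ Remark~\ref{mainres2corr}). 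Applying Proposition~\ref{mainres1} then produces a generization $C'$ in $\HH(d,g)$ with constant postulation, with $M(C') \cong M'$, and with a free resolution of the form \eqref{resoluMI55}, namely $0 \to P_4 \to P_3 \oplus R(-t-3)^4 \oplus \tilde Q_2 \to F_1 \to I(C') \to 0$.

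Next I would read off the $4$-tuple of $C'$. Since $M(C') \cong M'$, the summand $R(-t-3)^4$, which came from the Koszul resolution of $M_{[t]}$, is no longer part of $L_3$ for $M'$ and instead belongs to the ghost part $F_2(C')$; thus the resolution above is the Rao form \eqref{resoluMI} of $I(C')$ with $F_2(C') = R(-t-3)^4 \oplus Q_2 \oplus R(-t-4)^{b_1-1} \oplus R(-t)^{b_2}$ and $F_1(C') = F_1$ unchanged. Reading the free summands in degrees $t$ and $t+4$ off \eqref{bettiMI} gives $(a_1(C'),a_2(C'),b_1(C'),b_2(C')) = (a_1, a_2, b_1-1, b_2)$, which is (P1). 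The only loss of minimality permitted by Proposition~\ref{mainres1} occurs in degree $t+3$, where copies of $R(-t-3)$ in $R(-t-3)^4 \subseteq F_2(C')$ may cancel against degree-$(t+3)$ summands of $F_1$; since cancellation only pairs summands of equal degree, this never affects the summands in degrees $t$ and $t+4$, so the $4$-tuple is exactly as claimed.

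Finally I would compute the cohomology. Constant postulation gives $h^0(\sI_{C'}(v)) = h^0(\sI_C(v))$ for all $v$, while $M(C') \cong M'$ together with $M = M' \oplus M_{[t]}$ and $M_{[t]} \cong k(-t)$ supported in degree $t$ gives $h^1(\sI_{C'}(v)) = \dim M'_v = h^1(\sI_C(v)) - \delta_{v,t}$, which is the case $i=1$. For $i=2$ I use that $H^3(\sI_C(v)) \cong H^3(\sO_{\proj{3}}(v))$, obtained from $0 \to \sI_C \to \sO_{\proj{3}} \to \sO_C \to 0$ and the vanishing of $H^{j}(\sO_C)$ for $j \geq 2$, so that $h^3(\sI_{C'}(v)) = h^3(\sI_C(v))$; moreover $\chi(\sI_C(v))$ depends only on $(d,g,v)$ and is unchanged in $\HH(d,g)$. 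Comparing the two Euler characteristics and cancelling the equal $h^0$ and $h^3$ terms yields $h^2(\sI_{C'}(v)) - h^2(\sI_C(v)) = h^1(\sI_{C'}(v)) - h^1(\sI_C(v)) = -\delta_{v,t}$, which is the case $i=2$.

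The main obstacle is the bookkeeping in the second step: one must correctly recognise that, once $M_{[t]}$ has been removed, the block $R(-t-3)^4$ migrates from the third syzygy module $L_3$ of $M$ into the ghost part $F_2(C')$, and then check that the residual non-minimality of \eqref{resoluMI55}, confined to degree $t+3$, cannot contaminate the degrees $t$ and $t+4$ that define the $4$-tuple. Everything else is either the formal regrouping that puts \eqref{resoluMt} into the hypothesis of Proposition~\ref{mainres1} or a standard Euler-characteristic comparison.
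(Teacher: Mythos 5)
Your proposal is correct and follows exactly the route the paper intends: the corollary is stated as an immediate consequence of Proposition~\ref{mainres1}, obtained by splitting off one summand $R(-t-4)$ from $F_2$ (possible since $b_1 \neq 0$) so that \eqref{resoluMt} takes the shape \eqref{resoluMI4}, and then reading off the Betti numbers and cohomology from \eqref{resoluMI55} together with $M(C') \cong M'$ and the constancy of the postulation. Your careful bookkeeping — that $R(-t-3)^4$ moves into $F_2(C')$, that the possible non-minimality is confined to degree $t+3$ and so cannot touch the degrees $t$ and $t+4$ defining the 4-tuple, and the Euler-characteristic argument for $i=2$ — supplies precisely the details the paper leaves implicit.
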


In \cite{krao}, Cor.\;3.3 and Thm.\;3.4 we saw that the $4$-tuple was important
for discovering obstructedness:
\begin{corollary} \label{introth3} Let $C$ be a curve for which there is a
  graded $R$-module isomorphism $M(C) \cong M' \oplus M_{[t]}$, let
  $(a_1,a_2,b_1,b_2)$ be the 4-tuple and suppose ${_0\!\Ext_R^2}(M,M)=0$. Then
  $C$ is obstructed if
  $$
  a_2 \cdot b_1 \neq 0 \quad {\rm or} \quad a_1 \cdot b_1 \neq 0 \quad {\rm
    or} \quad a_2 \cdot b_2 \neq 0 \, .$$ 
  Moreover if $C$ is a diameter-1 curve (whence $t=c$), then $C$ is
  obstructed if and only if \small {$$ \beta_{1,c} \cdot \beta_{2,c+4} \neq 0
    \ \ \ {\rm or} \ \ \ \beta_{1,c+4} \cdot \beta_{2,c+4} \neq 0 \ \ \ {\rm
      or} \ \ \ \beta_{1,c} \cdot \beta_{2,c}\neq 0 \, . $$ }
\end{corollary}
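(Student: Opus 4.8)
The plan is to establish the two directions of the biconditional for diameter-1 curves separately, exploiting the concrete Koszul structure \eqref{Koszul} of $M(C) \simeq M_{[c]}^r$, which fixes $t=c$ and pins down the Rao-form resolution \eqref{resoluMt}. The forward implication (these products nonzero $\Rightarrow$ obstructed) should follow directly from the first assertion of the corollary: I would verify that for a diameter-1 curve the hypothesis ${_0\!\Ext_R^2}(M,M)=0$ holds automatically, so that the general obstruction criterion applies. Since $M \simeq M_{[c]}^r$ is a direct sum of shifted copies of $k$, the module ${_0\!\Ext_R^2}(M,M)$ decomposes into a sum of copies of ${_0\!\Ext_R^2}(k,k)$ up to twist, and because all summands sit in the same degree $c$, the relevant graded piece of $\Ext_R^2(k,k)$ in internal degree $0$ is the one read off the Koszul complex; I expect this to vanish, giving the hypothesis for free. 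With the dictionary $(a_1,a_2,b_1,b_2)=(\beta_{1,c+4},\beta_{1,c},\beta_{2,c+4},\beta_{2,c})$ from the Remark, the three displayed product conditions in the general statement translate verbatim into the three $\beta$-product conditions, so the forward direction is immediate.

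The substantive content is the converse: if all three $\beta$-products vanish, then $C$ is \emph{unobstructed}. Here I would invoke the deformation-theoretic machinery of \cite{krao} (Cor.\;3.3, Thm.\;3.4) that underlies the general statement, but now run in the opposite direction. The idea is that for diameter-1 curves the obstruction space, or rather the obstructing cup/Yoneda products that measure failure of smoothness of $\HH(d,g)$ at $(C)$, are controlled exactly by these products of graded Betti numbers in the two critical degrees $c$ and $c+4$. When all three products vanish, the relevant obstruction map is forced to be zero and $(C)$ is a smooth point. Concretely I would identify the tangent/obstruction calculation with the vanishing of certain entries arising from the composition of $L_4 \to L_3$ (the Koszul map $\sigma$, whose transpose is $(X_0,X_1,X_2,X_3)$) against the ghost-term summands $R(-c)^{a_2}$, $R(-c-4)^{a_1}$ in $F_1$ and $R(-c)^{b_2}$, $R(-c-4)^{b_1}$ in $F_2$.

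The main obstacle I anticipate is proving that the \emph{only} obstructions come from these three products, i.e.\ that the converse is genuinely an equivalence and not merely a sufficient condition. This requires showing that when $a_2 b_1 = a_1 b_1 = a_2 b_2 = 0$ the curve deforms freely, which I would attempt via the cancellation results already available: Corollary~\ref{mainrecor} and Corollary~\ref{mainres12} let me strip off ghost terms by generization. The strategy is to show that under the three vanishing conditions one can arrange a generization whose resolution has no obstructing ghost interactions left, and then argue that $C$ itself must already be unobstructed because the relevant comparison of dimensions (the dimension of $\HH(d,g)$ at $(C)$ versus the expected dimension) is forced to be an equality. The delicate point is the bookkeeping: the four ghost multiplicities $a_1,a_2,b_1,b_2$ interact in pairs, and I must check that every nonzero pairing that could obstruct is covered by one of the three listed products — in particular that the ``diagonal'' pairings $a_1 b_1$ (degree $c+4$ against $c+4$), $a_2 b_2$ (degree $c$ against $c$), and the ``cross'' pairing $a_2 b_1$ (relating the two degrees through the Koszul differential $\sigma$) exhaust the possibilities, while the remaining combination $a_1 b_2$ does \emph{not} obstruct. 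Justifying this asymmetry — why $a_1 b_2$ is absent from the list — is where I expect the real work to lie, and I would resolve it by a direct degree count on the Yoneda products using the explicit Koszul resolution \eqref{Koszul}.
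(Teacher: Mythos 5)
The paper never proves this corollary at all: it is quoted verbatim from \cite{krao}, Cor.\;3.3 and Thm.\;3.4, so the comparison must be against that source. Two pieces of your proposal are sound and do match the intended reduction: the Koszul count showing $\Ext_R^2(k,k)\cong k(2)^6$ is concentrated in internal degree $-2$, so ${_0\!\Ext_R^2}(M,M)=0$ holds automatically when $M\cong M_{[c]}^r$ (which is why the ``moreover'' part carries no $\Ext$-hypothesis), and the dictionary $(a_1,a_2,b_1,b_2)=(\beta_{1,c+4},\beta_{1,c},\beta_{2,c+4},\beta_{2,c})$. But with that, you have only reduced the forward implication of the diameter-one statement to the \emph{first} assertion of the corollary, and you never prove that first assertion: it is itself part of the statement, and in \cite{krao} it requires an explicit computation exhibiting a nonzero obstruction (a cup/Yoneda product on the graded deformation functor of $R\to R/I$, read off the minimal resolution), for which your proposal offers no substitute.

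The more serious gap is the converse (all three products zero $\Rightarrow$ unobstructed). Your mechanism --- strip ghost terms by generization via Corollary~\ref{mainrecor} and Corollary~\ref{mainres12}, then conclude that $C$ itself is unobstructed ``because the dimension comparison is forced to be an equality'' --- is structurally invalid: smoothness of $\HH(d,g)$ at a generization never transfers back to the special point, and the equality $\dim_{(C)}\HH(d,g)=h^0(\sN_C)$ is precisely what has to be proven, with no computation of either side offered. The paper itself supplies a counterexample to your inference pattern: the curve of Example~\ref{ex3}\,(a), with 4-tuple $(0,1,1,0)$, lies on a \emph{unique} component of $\HH(32,109)_S$ and admits generizations to unobstructed curves with 5-tuple $(0,0,0,0,0)$ (Example~\ref{ex33}\,(a)), yet it is obstructed; so ``unobstructed generizations exist, hence $C$ is unobstructed'' cannot be the argument, and your sketch never explains how the vanishing $a_2b_1=a_1b_1=a_2b_2=0$ enters the \emph{local} computation at $(C)$. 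In \cite{krao}, Thm.\;3.4, this is exactly what is done: each of the three shapes $(0,0,b_1,b_2)$, $(a_1,0,0,b_2)$, $(a_1,a_2,0,0)$ is handled by graded deformation theory, comparing $\HH(d,g)$ with $\HH_{\gamma}$ and $\HH_{\gamma,\rho}$ through vanishings such as ${_0\!\Hom_R}(I,M)=0$ (cf.\ Theorem~\ref{gen}) and the known dimension formula for $\HH_{\gamma,\rho}$, to force smoothness at $(C)$. The ``real work'' you defer --- the degree count on Yoneda products that also explains why $a_1b_2\neq 0$ is harmless --- is the entire content of that theorem, and it is left unexecuted, so the proposal as written is a plan rather than a proof.
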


\begin{remark} \label{rem5tupleobstr} Let $M(C) \cong M' \oplus M_{[t]}$ as
  graded $R$-modules. For its 4-tuple $(a_1,a_2,b_1,b_2)$, we have that $$ a_2
  \cdot b_1 = 0 \quad {\rm and} \quad a_1 \cdot b_1 = 0 \quad {\rm and} \quad
  a_2 \cdot b_2= 0 \
$$ is equivalent to requiring it to be of the form $
(0,0,b_1,b_2) , (a_1,0,0,b_2) {\rm \ or} \ (a_1,a_2,0,0).$ Hence by
Corollary~\ref{introth3}, if $C$ is unobstructed, then there are ``two
consecutive 0's in the 4-tuple''. This is equivalent to unobstructedness if
$\diam M = 1$. Note that if $\diam M = 0$ ($C$ is ACM), then $C$ is always
unobstructed by \cite{El}.
\end{remark}

\begin{example} \label{ex3} {\rm (a)} Start with the generic curve of
  $\HH(8,5)_S$. It has 2-dimensional Rao module $M$ and $\diam M =1$ by 
  \cite{GP1}. We link with a CI of type $(4,6)$, then with a CI of type
  $(6,8)$, using the same degree-6 surface in both linkages. The minimal
  resolution of the bilinked curve is
   \begin{equation*} 
      0 \rightarrow R(-10)^2 \rightarrow R(-10) \oplus  R(-9)^{8} \rightarrow
      R(-8)^7 \oplus  R(-6) \rightarrow I \rightarrow 0  \, ,
    \end{equation*} 
    whence $c=6$ and $r=2$. The corresponding 4-tuple is $ \
    (\beta_{1,c+4},\beta_{1,c},\beta_{2,c+4}, \beta_{2,c}) = (0,1,1,0),$ i.e.
    the curve $C$ of \ $\HH(32,109)_S$ is obstructed by
    Remark~\ref{rem5tupleobstr}.

    {\rm (b)} The curve $C$ of \ $\HH(33,117)_S$ of \ $\diam M = 1$ of
    Example~\ref{BKM1} has 4-tuple $(1,0,1,0)$, i.e. $C$ is obstructed by
    Remark~\ref{rem5tupleobstr}. Since $c(C)=5$, this curve has
    maximal rank.
%
\end{example}

In the next section, we shall see that the curve of Example~\ref{ex3} (a)
belongs to a unique irreducible component, while the curve of
Example~\ref{ex3} (b) sits in the intersection of two irreducible components
of  $\HH(d,g)_S$.

\section{On the semi-continuity of graded Betti numbers} 
The goal of this section is to show a result on the semi-continuity of the
graded Betti numbers of the homogeneous ideal $I(C)$ of a curve $C \subseteq
\pp^3$ considered as a point in $\HH(d,g)$. We get the result as a consequence
of the fact that the immersion $ \HH_{ \gamma} \rightarrow \HH(d,g)$ is an
isomorphism in an open neighbourhood of $(C)$ under a certain assumption. We
also show a variation of a result of Bolondi, leading to the irreducibility of
Betti strata with constant Rao modules. Letting ${_0\!\Ext_{R}^i}(-,-)$ be the
degree-$0$ part of $\Ext_{R}^i(-,-)$, we have

\begin{theorem} \label{gen}
  Let $C$ be any curve and let $I = H_{*}^0(\sI_C)$  and  $M =
  H_{*}^1(\sI_C)$. Then \\[-1mm] 

   \qquad ${_0\!\Hom_R}(I,M) = 0 \ \ \Longrightarrow \ \ \HH_{ \gamma}
   \cong
   \HH(d,g) \ \ {\rm are \ isomorphic \ as \ schemes \ at} \ (C) . $ 
\end{theorem}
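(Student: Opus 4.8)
The plan is to show that the locally closed immersion $j\colon \HH_{\gamma}\hookrightarrow \HH(d,g)$ is an isomorphism in a neighbourhood of $(C)$. Constant postulation is a locally closed condition by the semi-continuity of $v\mapsto h^0(\sI_C(v))$, so at $(C)$ the immersion $j$ induces a surjection of local rings, and it suffices to prove in addition that $j$ is formally smooth at $(C)$: a formally smooth locally closed immersion that is bijective on tangent spaces is an isomorphism near the point. I would extract both the bijectivity on tangent spaces and the formal smoothness from a single short exact sequence of graded modules.

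First I would record the deformation-theoretic identifications. Writing $B:=H_{*}^{0}(\sO_C)$, one has the tangent space $T_{(C)}\HH(d,g)=H^0(\sN_C)\cong {_0\!\Hom_R}(I,B)$ with obstructions contained in ${_0\!\Ext_R^1}(I,B)$, and similarly $T_{(C)}\HH_{\gamma}\cong {_0\!\Hom_R}(I,R/I)$ with obstructions in ${_0\!\Ext_R^1}(I,R/I)$, the constant-postulation functor being the fixed-Hilbert-function deformation functor of the graded quotient $R/I$. Under $j$ these identifications are compatible with the map induced by the inclusion $R/I\hookrightarrow B$, whose cokernel is exactly $M$: applying $H_{*}^{0}$ to $0\to\sI_C\to\sO\to\sO_C\to 0$ gives the four-term sequence $0\to I\to R\to B\to M\to 0$, hence $0\to R/I\to B\to M\to 0$.

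Applying ${_0\!\Hom_R}(I,-)$ to this short exact sequence yields
\[
0 \to {_0\!\Hom_R}(I,R/I) \to {_0\!\Hom_R}(I,B) \to {_0\!\Hom_R}(I,M) \to {_0\!\Ext_R^1}(I,R/I) \to {_0\!\Ext_R^1}(I,B) \to \cdots .
\]
The hypothesis ${_0\!\Hom_R}(I,M)=0$ then does two things at once: it forces the tangent map $T_{(C)}\HH_{\gamma}\to T_{(C)}\HH(d,g)$ to be an isomorphism, and it forces the comparison of obstruction spaces ${_0\!\Ext_R^1}(I,R/I)\to {_0\!\Ext_R^1}(I,B)$ to be injective. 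By the standard smoothness criterion for a morphism of deformation functors carrying compatible obstruction theories, namely tangent map surjective together with obstruction map injective, the morphism $j$ is formally smooth at $(C)$. Being a locally closed immersion that is formally smooth and bijective on tangent spaces, $j$ is an open immersion near $(C)$ whose image is also closed there, hence an isomorphism of schemes at $(C)$.

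The step I expect to be the main obstacle is making the obstruction-level comparison rigorous: one must exhibit genuinely compatible obstruction theories for the two functors, so that the natural transformation $\HH_{\gamma}\to \HH(d,g)$ induces on obstruction spaces precisely the map ${_0\!\Ext_R^1}(I,R/I)\to {_0\!\Ext_R^1}(I,B)$ above, and then verify that the smoothness criterion indeed applies. A secondary technical point is justifying the identifications $H^0(\sN_C)\cong {_0\!\Hom_R}(I,B)$ and $T_{(C)}\HH_{\gamma}\cong {_0\!\Hom_R}(I,R/I)$ with the stated obstruction containments; here the lCM hypothesis (so that $M$ has finite length and $B$ has depth $\geq 2$, killing the low-degree discrepancies between sheaf-$\Hom/\Ext$ and the graded $\Hom/\Ext$ over $R$ in degree $0$) is what makes the graded-module computation legitimate.
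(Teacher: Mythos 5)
Your proof is correct and takes essentially the same route as the paper: the paper's own proof consists of interpreting, in terms of the deformation theories of $\HH_{\gamma}$ and $\HH(d,g)$ at $(C)$, the exact sequence
$0 \to {_0\!\Ext_R^1}(I,I) \to H^0(\sN_C) \to {_0\!\Hom_R}(I,M) \to {_0\!\Ext_R^2}(I,I) \to H^1(\sN_C)$
(which is your sequence, rewritten via the connecting maps of $0 \to I \to R \to R/I \to 0$ and the depth-$\ge 2$ comparison identifying ${_0\!\Hom_R}(I,B)$ with $H^0(\sN_C)$), citing \cite{krao}, Prop.\;2.10 for the compatibility of tangent and obstruction maps. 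Your mechanism --- vanishing of ${_0\!\Hom_R}(I,M)$ forces the tangent map to be bijective and the obstruction comparison to be injective, whence the immersion $\HH_{\gamma}\hookrightarrow\HH(d,g)$ is formally smooth and hence a local isomorphism --- is precisely the argument behind that citation.
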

\begin{proof} By mainly interpreting the exact sequence
\begin{equation} \label{mseq1}
0 \to {_0\!\Ext_R^1 }(I,I ) \rightarrow  H^0({\sN_C}) \rightarrow
 {_0\!\Hom_R}(I , M)  \rightarrow {_0\!\Ext_R^2 }(I,I )
  \rightarrow H^1({\sN_C}) \rightarrow
\end{equation} in terms of deformation theories, as done in Prop.\;2.10 of
\cite{krao}, we get the conclusion.
\end{proof}

%
\begin{remark} \label{genPis} Theorem~\ref{gen} holds in general for any
  closed subschemes $C$ of $\mathbb{P}^n_k=\Proj(R)$, $k= \overline k$ under
  the sole assumption ${_0\!\Hom_R}(I,M) = 0$ by \cite{K07}, Prop.\;8 where
  the main ingredient in a proof (the isomorphism between the local graded
  deformation functor of $R \to R/I$ and the local Hilbert functor of $C
  \subset \mathbb{P}^n$) was proven already in 1979 (\cite{K79}, Thm.\;3.6 and
  Rem.\;3.7). Note that if $H^1(\sI_C(\deg F_i))= 0$ for every minimal
  generator $F_i$ of $I$, we get ${_0\!\Hom_R}(I,M)=0$ and
  hence 
  this result generalizes the comparison theorem of Piene-Schlessinger in
  \cite{PiS}.
\end{remark}

If $C$ has maximal rank, then $ {_0\!\Hom_R}(I , M) = 0$. In this case it is
not so difficult to show $ \HH_{ \gamma} \cong \HH(d,g) $ at $(C)$ by using
the semi-continuity of $h^i(\sI_C(v))$. The assumption $ {_0\!\Hom_R}(I , M) =
0$ are, however, much weaker than requiring $C$ to be of maximal rank, at
least for generic unobstructed curves. In fact if ${_0\!\Ext_R^2}(M , M) = 0$
and $C$ is unobstructed and generic in $\HH(d,g)$, then it is shown in
\cite{krao}, Prop.\;2.11 that $ {_0\!\Hom_R}(I , M) = 0$.

As a surprising consequence of Theorem~\ref{gen}, we get the following result
on the semi-continuity of the graded Betti numbers which we heavily use in the
next section.

\begin{corollary} \label{semic} Inside $\HH_{ \gamma}$ and hence inside $\HH_{
    \gamma, \rho}$ the graded Betti numbers are upper semi-continuous, i.e. if
  $C'$ is a generization of $C$ in $\HH_{ \gamma}$, then $$\beta_{i,j}(C') \le
  \beta_{i,j}(C) \quad {\rm for \ any \ } i,j \ .$$ In particular if $C$ is
  any curve satisfying $ {_0\!\Hom_R}(I(C),M(C)) = 0$, then $\beta_{i,j}(C')
  \le \beta_{i,j}(C)$ for any i,j and every generization $C'$ of $C$ in
  $\HH(d,g)$.
\end{corollary}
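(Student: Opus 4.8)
The plan is to realize each graded Betti number, fibre-wise over $\HH_{\gamma}$, as the dimension of a homology group of a fixed complex of vector bundles, and then to invoke the upper semi-continuity of such homology dimensions. It suffices to argue locally near $(C)$, so set $T:=\HH_{\gamma}$, let $\sC\subseteq T\times\proj{3}$ be the universal curve (flat over $T$, with ideal sheaf $\sI_{\sC}$), and let $\pi\colon T\times\proj{3}\to T$ be the projection. By definition of $\HH_{\gamma}$ the postulation is constant along $T$, i.e.\ $h^0(\sI_{C_t}(v))=\gamma(v)$ is independent of $t\in T$ for every $v$. By cohomology and base change (Grauert's theorem) this forces each $\pi_*\sI_{\sC}(v)$ to be locally free of rank $\gamma(v)$ with formation commuting with base change; hence the sheaf $\sN:=\oplus_v \pi_*\sI_{\sC}(v)$, a family of graded $R$-modules parametrised by $T$, has locally free graded pieces and fibres $\sN_t\cong I(C_t)$ in every degree. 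I expect this to be the main obstacle: one has to be sure that constant postulation \emph{in all degrees} really upgrades base change, so that the fibres of $\sN$ are exactly the saturated homogeneous ideals $I(C_t)$, rather than only subsheaves agreeing with them up to saturation.

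Granting this, I would compute Betti numbers homologically. Let $K_\bullet$ be the Koszul complex resolving $k$ over $R$, so that $\beta_{j,i}(C_t)=\dim_{k(t)}\Tor_{j-1}^R(I(C_t),k)_i=\dim_{k(t)}H_{j-1}\big((\sN_t\otimes_R K_\bullet)_i\big)$. For each fixed $i$, the graded strand $(\sN\otimes_R K_\bullet)_i$ is the bounded complex
\[
0\to \sN_{i-4}\to \sN_{i-3}^{\,4}\to \sN_{i-2}^{\,6}\to \sN_{i-1}^{\,4}\to \sN_i\to 0
\]
of finite locally free $\sO_T$-modules, and, because the terms are locally free and base change was arranged to commute, its fibre at $t$ is precisely $(\sN_t\otimes_R K_\bullet)_i$. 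For a complex of vector bundles, the fibre-wise homology dimension at a term equals the (constant) rank of that term minus the ranks at $t$ of the two differentials into and out of it; since the rank of a morphism of vector bundles is lower semi-continuous, each $\beta_{j,i}(C_t)$ is upper semi-continuous on $T$. Taking $C'$ a generization of $C$ in $\HH_{\gamma}$ (a point of the open locus of an irreducible subset through $(C)$) gives $\beta_{j,i}(C')\le\beta_{j,i}(C)$, and the same bound inside $\HH_{\gamma,\rho}$ is immediate since $\HH_{\gamma,\rho}\subseteq\HH_{\gamma}$ and a generization in $\HH_{\gamma,\rho}$ is in particular one in $\HH_{\gamma}$.

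For the final (``in particular'') assertion I would appeal to Theorem~\ref{gen}: the hypothesis ${_0\!\Hom_R}(I(C),M(C))=0$ yields an isomorphism of schemes $\HH_{\gamma}\cong\HH(d,g)$ at $(C)$, so on a neighbourhood of $(C)$ the inclusion $\HH_{\gamma}\hookrightarrow\HH(d,g)$ is an isomorphism and the postulation is constant. Consequently every irreducible subset of $\HH(d,g)$ through $(C)$ lies, near $(C)$, inside $\HH_{\gamma}$, so any generization $C'$ of $C$ in $\HH(d,g)$ is already a generization in $\HH_{\gamma}$; the bound $\beta_{j,i}(C')\le\beta_{j,i}(C)$ then follows from the first part. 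The only point to keep in mind is that generizations are defined through arbitrarily small open subsets, so we may always take $C'$ inside the neighbourhood where the two schemes agree.
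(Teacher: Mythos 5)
Your proposal is correct, and its overall architecture matches the paper's: first establish upper semi-continuity of the graded Betti numbers in a flat family with constant postulation, then use Theorem~\ref{gen} to transfer the statement from $\HH_{\gamma}$ to $\HH(d,g)$ under the hypothesis ${_0\!\Hom_R}(I(C),M(C))=0$; your handling of the second step (shrinking to the neighbourhood where $\HH_{\gamma}\hookrightarrow \HH(d,g)$ is an isomorphism, so that any generization in $\HH(d,g)$ may be taken inside $\HH_{\gamma}$) is exactly the paper's ``combine with Theorem~\ref{gen}''. Where you genuinely differ is in how the first step is justified: the paper treats it as known, citing Nakayama's lemma applied to the relative syzygy modules of \eqref{resolu} (as in \cite{K07}, Rem.\;7, using \cite{RZ}) and the classical reference \cite{BG}, whereas you give a self-contained argument via cohomology and base change plus the degree-$i$ strands of the Koszul complex, realizing each $\beta_{j,i}(C_t)$ as $\dim H_{j-1}$ of a complex of vector bundles and invoking lower semi-continuity of ranks of bundle maps. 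These are two faces of the same lemma --- Nakayama on syzygies is the resolution-theoretic version of your $\Tor$ computation --- but your version has the merit of making explicit precisely where constant postulation in \emph{all} degrees enters, namely to make each $\pi_*\sI_{\sC}(v)$ locally free with fibre $I(C_t)_v$, which is the point you correctly flagged as the crux. One detail you should state explicitly: Grauert's theorem requires a reduced base, and $\HH_{\gamma}$ need not be reduced; since the $\beta_{j,i}(C_t)$ depend only on the underlying point $t$, you should run the semi-continuity argument on the reduced induced structure of the irreducible subset $V\ni (C)$ whose generic curve is $C'$ --- this is all that the statement about generizations requires, and it is the standard reduction, so it is a presentational fix rather than a gap.
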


\begin{proof} We apply Nakayama's lemma to the syzygy modules of
  \eqref{resolu} as explained in \cite{K07}, Rem.\! 7 where we to a certain
  degree use \cite{RZ}, but our use of semi-continuity which takes place in a
  flat family with constant postulation is well known \cite{BG}. Then
  we combine with Theorem~\ref{gen}.
\end{proof}

\begin{example} \label{ex2} It is known that the curve $X$ of
  Example~\ref{ex1} sits in the intersection of two irreducible components of
  $\HH(18,39)_S$ and that the generic curve $\tilde X$ of one of the
  components satisfies 
  \begin{equation*} 0 \rightarrow R(-8) \oplus R(-6)^{2}
    \rightarrow R(-5)^4 \rightarrow I(\tilde X) \rightarrow 0 \, . 
    \end{equation*} 
    (Sernesi \cite{Se}, cf. \cite{EF}). Looking at the minimal resolution of
    $I(X)$ in Example~\ref{ex1}, we get $ \beta_{1,5}( X)= \beta_{2,6}( X)= 0$
    while $\beta_{1,5}(\tilde X)= 4$, $\beta_{2,6}(\tilde X)= 2$ , i.e. we
    don't have semi-continuity for $\beta_{1,5}$ and $\beta_{2,6}$. In this
    example Corollary~\ref{semic} does not apply because
    ${_0\!\Hom_R}(I(X),M(X))\ne 0$!
\end{example}

Finally we consider the {\it Betti stratum} $ \HH(\underline \beta):= \left\{
  (D) \in \HH_{\gamma,\rho} \ \arrowvert \ \beta_{j,i}(D) = \beta_{j,i} {\rm \
    for \ every} \ i,j \right\}$, see \cite{I} and its references for papers
on the Betti stratum. Thanks to Bolondi's proof of the irreducibility of
$\HH_{ \gamma, \rho}$ in the Buchsbaum case (\cite{B}, Thm.\;2.2, cf.
\cite{BM2}, Prop.\;4.3), we easily get
\begin{proposition} \label{BettiStrata} If\, $C \subseteq \pp^3$ is a diameter-1
  curve or\, $C$ is ACM, then $ \HH(\underline \beta(C))$ is irreducible.
\end{proposition}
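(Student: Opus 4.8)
The plan is to combine Bolondi's irreducibility of $\HH_{\gamma,\rho}$ with a determinantal description of the Betti stratum sitting inside it. First I would note that a diameter-1 curve, and trivially an ACM curve, is Buchsbaum, so by \cite{B}, Thm.\;2.2 the scheme $\HH_{\gamma,\rho}$ containing $(C)$ is irreducible. Next I observe that on $\HH_{\gamma,\rho}$ the Rao module is constant as a graded $R$-module: the deficiency $\rho$ fixes the Hilbert function of $M$, and since $\mathfrak m M = 0$ in the diameter-1 case this forces $M \cong k(-c)^r$ with $r=\rho(c)$, while $M=0$ in the ACM case. Finally, the graded Betti numbers are upper semi-continuous on $\HH_{\gamma,\rho}$ by Corollary~\ref{semic}, so each Betti stratum $\HH(\underline\beta)$ is locally closed and hence open in its own closure; it therefore suffices to show that the nonempty closure $\overline{\HH(\underline\beta(C))}$ is irreducible.

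For the closure I would set up a parametrization in the spirit of Bolondi. Since the minimal resolution \eqref{resoluM} of $M$ is now fixed, Rao's form \eqref{resoluMI} presents every curve of $\HH_{\gamma,\rho}$ as the cokernel of an injective graded map $\phi\colon E\oplus F_2\to F_1$ with $E=\coker\sigma$ fixed (for diameter one $E$ comes from the Koszul data \eqref{Koszul}), and with $F_1,F_2$ free of ranks pinned down in each degree by $\gamma$, $r$ and the chosen $\underline\beta$. I would let the entries of $\phi$ range over the irreducible affine space $\Hom_R(E\oplus F_2,F_1)_0$, intersect with the open condition that $\coker\phi$ be the saturated ideal of a curve with the prescribed cohomology, and quotient by the natural connected automorphism group; this reproduces $\HH_{\gamma,\rho}$ and recovers Bolondi's irreducibility. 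The point is that the appearance of ghost terms is controlled by the rank of the constant-coefficient block $\phi_0$ of $\phi$ in the relevant degrees -- exactly the scalars $a_{j,i}\in k$ that surface in the proof of Proposition~\ref{mainres1} and in Corollary~\ref{mainrecor} -- so that membership in $\HH(\underline\beta)$ is the condition $\rank\phi_0=k$ for a fixed value $k$, with closure $\rank\phi_0\le k$.

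The variation of Bolondi's argument is then to run his parametrization restricted to this rank locus. As $\phi_0$ is a matrix whose entries are coordinates on the irreducible parameter space, the degeneracy loci $\{\rank\phi_0\le k\}$ are irreducible by the standard theory of generic determinantal varieties, and pulling back along the smooth, connected-fibred parametrization and pushing forward to $\HH_{\gamma,\rho}$ yields exactly $\overline{\HH(\underline\beta(C))}$, still irreducible. In the ACM case the conclusion is even more direct via Hilbert--Burch: $I(C)$ is the ideal of maximal minors of the first-syzygy matrix, whose entries of prescribed degrees form an irreducible space on which the minimality of the resolution is again a rank condition on a matrix of constants.

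I expect the main obstacle to be the precise verification that the ghost-term data is cut out by an honest determinantal condition on a block $\phi_0$ whose entries are sufficiently generic over the Bolondi parameter space for its rank loci to be irreducible, and that the parametrizing map has irreducible fibres, so that the image of each irreducible rank locus is irreducible rather than a finite union. Establishing this genericity is precisely the content of the promised variation of Bolondi's result, and it is the diameter-1 hypothesis that keeps $E$, and hence the entire non-constant part of $\phi$, rigid enough for the determinantal analysis to apply.
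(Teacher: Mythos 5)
Your strategy is correct in outline, but it takes a genuinely heavier route than the paper, whose proof is a two-point interpolation rather than a parametrization-plus-determinantal argument: if the stratum had two components with generic curves $D_1,D_2$, these share the same $F_1$, $F_2$ and the same $\sigma\oplus 0$ (for diameter one, \eqref{Koszul} fixes $E=\coker\sigma$; likewise in the ACM case), hence correspond to maps $\varphi_{D_1},\varphi_{D_2}\in\Hom(E\oplus F_2,F_1)$, and the pencil $\varphi_t=t\varphi_{D_1}+(1-t)\varphi_{D_2}$, $t\in\mathbb A^1_k$, stays inside the stratum for $t$ in an open subset $U$ containing $0$ and $1$, because the constant entries whose vanishing expresses minimality (the ghost positions) vanish at both endpoints and hence identically in $t$; irreducibility of $U$ then connects the two components. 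In other words, once $F_1,F_2$ are fixed to be the minimal modules prescribed by $\underline\beta(C)$, membership in the stratum is a \emph{linear} condition (your rank condition with $k=0$) intersected with open conditions, so no determinantal theory is needed; this is where your proposal over-engineers. Three remarks on your sketch. First, the obstacle you flag does resolve if you insist on the global parameter space: the constant entries of the $F_2\to F_1$ block are independent coordinates on $\Hom_R(E\oplus F_2,F_1)_0$, so its rank loci (which must be imposed degree by degree, not as a single rank condition) are generic determinantal varieties times an affine space, hence irreducible; and the $E\to F_1$ block imposes no condition at all, since for any graded map $E\to F_1$ the composite $L_3\twoheadrightarrow E\to F_1$ has constant part $B_0$ satisfying $B_0\sigma=0$, which forces $B_0=0$ by the Koszul relations (this is also why degree-$(c+3)$ ghost terms are unavoidable, cf. Example~\ref{ghex}). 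Second, your worry about irreducible fibres is unnecessary: the image of an irreducible set under any morphism is irreducible regardless of the fibres, so neither the group quotient nor a fibre analysis is needed — and for the same reason the detour through closures and local closedness can be skipped. Third, what your approach buys, once completed, is a determinantal description of all Betti strata and their closures inside Bolondi's parameter space; what the paper's proof buys is brevity and, importantly, the explicit interpolation family itself, which is immediately reused as the ``trivial generization'' of Definition~\ref{genT} and Corollary~\ref{gencorT}, and which extends verbatim to the constant-Rao-module statement of Remark~\ref{BettiStrata2}.
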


\begin{proof} Suppose $ \HH(\underline \beta(C))$ is not irreducible,
  containing at least two different irreducible components with generic curves
  $D_1$ and $D_2$. Then $D_1$ and $D_2$ have exactly the same $R$-free
  summands and the same morphism $\sigma \oplus 0$ in the minimal resolution
  \eqref{resoluMI}, cf.\;\eqref{Koszul}, but the maps $L_3 \oplus F_2 \to F_1$
  are different. In their $E$-resolutions the curves correspond to two maps
  $\varphi_{D_1}$ and $\varphi_{D_2}$ in $ \Hom(E \oplus F_2,F_1)$, $E=\coker
  \sigma$. Consider the deformation induced by
  \begin{equation} \label{generiT}
    \varphi_t:=t\varphi_{D_1}+(1-t)\varphi_{D_2} \in \Hom(E \oplus F_2,F_1), \
    t \in \mathbb A^1_k.
    \end{equation} 
  In some open subset $U \subset \mathbb A^1_k$
  containing $0$ and $1$, $\varphi_t$ defines a curve with the same graded
  Betti numbers as $D_1$ (and $D_2$) because in the minimal resolutions where
  $0$-entries occur for the matrices of $\varphi_{D_1}$ and $\varphi_{D_2}$
  due to repeated direct summands of $F_2$ and $F_1$, the same entry also
  vanishes for $\varphi_t$. Since $U$ is irreducible we are
  done. 
\end{proof}
\begin{definition} \label{genT} If $(D_1), (D_2) \in \HH(\underline \beta)$
  are related as in \eqref{generiT}, then the generic element $\tilde D$ of $
  \mathbb A^1_k$ is called a trivial generization of $D_1$ (or of $D_2$).
  Obviously, $(\tilde D) \in \HH(\underline \beta)$.
\end{definition}
\begin{corollary} [of proof] \label{gencorT} Two arbitrary curves $D_1$ and
  $D_2$ of $\HH(\underline \beta)$ 
  admit a trivial generization.
\end{corollary}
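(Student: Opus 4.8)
The plan is to observe that the argument given for Proposition~\ref{BettiStrata} never uses that $D_1$ and $D_2$ are the generic curves of two distinct irreducible components; it uses only that both points lie in $\HH(\underline \beta)$. Hence the very same construction, applied verbatim to an arbitrary pair $(D_1),(D_2)\in \HH(\underline \beta)$, produces the trivial generization asserted here. So the corollary is literally a restatement of what the proof of Proposition~\ref{BettiStrata} establishes, with the hypothesis ``generic curves of components'' weakened to ``arbitrary curves''.

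First I would record why $D_1$ and $D_2$ sit inside a common $E$-resolution framework. Since $\HH(\underline \beta)\subseteq \HH_{\gamma,\rho}$ and we are in the diameter-$1$ (or ACM) case, the Rao module is $M\cong k(-c)^r$ (resp. $M=0$), determined up to graded isomorphism by the constant invariants $r$ and $c$; thus its minimal resolution \eqref{Koszul}, and in particular the map $\sigma$ and the module $E=\coker\sigma$, are fixed, as are $L_3$ and its complement $F_2$. Because $\underline \beta$ fixes all the free modules occurring in \eqref{resoluMI}, both $D_1$ and $D_2$ present $I(D_i)$ by $E$-resolutions \eqref{resoluE} with the identical data $E$, $F_2$, $F_1$, and therefore correspond to maps $\varphi_{D_1},\varphi_{D_2}\in \Hom(E\oplus F_2,F_1)$. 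The splitting of $L_3\oplus F_2$ into $L_3$ and $F_2$ is forced by the (fixed) graded Betti numbers of $M$, so there is no ambiguity in this framework.

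Next I would run the pencil $\varphi_t=t\varphi_{D_1}+(1-t)\varphi_{D_2}$ of \eqref{generiT}. Every degree-$0$ (constant) entry of the matrices of $\varphi_{D_1}$ and $\varphi_{D_2}$ vanishes by minimality of the two resolutions; being a $k$-linear combination, the matrix of $\varphi_t$ has these same constant entries equal to $0$, so the complex $0\to L_4\xrightarrow{\sigma\oplus 0}L_3\oplus F_2\xrightarrow{\varphi_t}F_1$ stays minimal for every $t$. On the open subset $U\subseteq \mathbb A^1_k$ containing $0$ and $1$ on which this complex is exact, it is a minimal resolution with graded Betti numbers $\underline \beta$; in particular the postulation is constant, the family is flat, and $(D_t)\in \HH(\underline \beta)$ for $t\in U$. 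The generic element $\tilde D$ of $\mathbb A^1_k$ is then, by Definition~\ref{genT}, a trivial generization of both $D_1$ and $D_2$, which is exactly the claim.

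The main obstacle is the bookkeeping in the second step: one must be certain that the two endpoints genuinely share the same triple $(E,F_2,F_1)$ together with the same map $\sigma\oplus 0$, so that $\varphi_{D_1}$ and $\varphi_{D_2}$ live in one and the same $\Hom$-space and the pencil is even defined. This is precisely where the diameter-$1$/ACM hypothesis is needed, since it pins down the Rao module and hence $\sigma$ and $E=\coker\sigma$ across all of $\HH(\underline \beta)$; without a fixed Rao module the two $E$-resolutions could differ and the construction would break down. Once this identification is in hand, the minimality along the whole pencil and the flatness over $U$ are the routine verifications already carried out in Proposition~\ref{BettiStrata}, so nothing further is required.
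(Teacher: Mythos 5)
Your proposal is correct and is essentially identical to the paper's own reasoning: the paper labels this statement ``Corollary [of proof]'' precisely because the pencil construction $\varphi_t = t\varphi_{D_1}+(1-t)\varphi_{D_2}$ in the proof of Proposition~\ref{BettiStrata} never uses that $D_1$, $D_2$ are generic curves of distinct components, only that they lie in $\HH(\underline\beta)$ (so that the fixed Rao module in the diameter-1/ACM case forces a common $(E,F_2,F_1)$ and $\sigma\oplus 0$). Your added bookkeeping about why the two curves share the same $\Hom$-space and why minimality persists along the pencil matches the paper's argument point for point.
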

\begin{remark} [Bolondi, cf. \cite{B}, Cor.\;2.3] \label{BettiStrata2}
  Let $C \subseteq \pp^3$ be any curve with Rao module $M$. By the same proof
  as above we get the irreducibility
  of: 
  \[ \left\{ (D) \in \HH_{\gamma} \ \arrowvert \ M(D) \simeq M \ {\rm as \
      graded \ }R{\rm-modules, \ and} \ \beta_{j,i}(D) = \beta_{j,i}(C) {\rm
      \ for \ every} \ i,j \right\} \, .
\]
\end{remark}

\section{Generizations not preserving postulation}
In this section we study generizations of space curves, i.e.\;deformations to
more general curves by ``simplifying'' their minimal resolutions. We start
with the following generalization of \cite{krao}, Prop.\;4.2 (b) for which we
give a new proof 
where we make ghost terms of a linked curve redundant under generization. Note
that by {\em redundant} terms in a free resolution, we mean consecutive free
summands that split off (disappear) when we make the free resolution minimal,
while ghost terms don't split off! Recalling $ M_{[t]} \cong R/\mathfrak
m(-t)$, 
we have

\begin{proposition} \label{newmainres} Let $C$ be a curve in $\proj{3}$ with
  Rao module $M(C)$, and suppose there is a graded $R$-module isomorphism $M(C)
  \cong M' \oplus M_{[t]}$. If $F_1 \cong Q_1 \oplus R(-t)$ in the minimal
  resolution  \eqref{resoluMt}
  of the homogeneous ideal $I(C)$:
\begin{equation} \label{resoluMI40}
  0 \rightarrow P_4 \oplus R(-t-4)  \xrightarrow{(\sigma', 
  \sigma_{[t]}) \oplus 0} P_3 \oplus
  R(-t-3)^{4} \oplus 
  F_2  \rightarrow F_1 \rightarrow I(C) \rightarrow 0 \, , \ 
\end{equation}
and if $P_2$ does not contain a direct summand $R(-t)$ (i.e.
$\beta_{3,t}(M')=0$, cf.\;\eqref{MplussKosz}), then there is a generization $C'
\subseteq \proj{3}$ of $C \subseteq \proj{3}$ in $\HH(d,g)$ with constant
specialization and constant $M'$ (up to a graded $R$-module isomorphism) such
that $I(C')$ has the $R$-free resolution:
\begin{equation}  \label{resoluminMI40}
  0 \rightarrow P_4  \xrightarrow{\sigma' \oplus 0 \oplus 0} P_3 \oplus F_2
  \oplus R(-t-2)^{6} 
  \rightarrow Q_1 \oplus R(-t-1)^{4}  \rightarrow I(C') \rightarrow 0 \, . 
\end{equation}
The resolution is minimal except possibly in degree $t+1$ and $t+2$ in which
some of the summands of $R(-t-1)^{4}$ (resp. $R(-t-2)^{6}$) may be cancelled
against corresponding free summands of $F_2$ (resp. $Q_1$). Moreover there
exists a generization as above with a minimal resolution where all free common
summands of $\{F_2,R(-t-1)^{4}\}$ and $\{R(-t-2)^{6}, Q_1\}$ are cancelled.
\end{proposition}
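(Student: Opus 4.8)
The plan is to produce the generization $C'$ by a linkage argument, passing from $C$ to a linked curve $D$, simplifying the resolution of $I(D)$ via the already-established deformation results, and then linking back to obtain $C'$. The key observation is that the hypothesis $M(C) \cong M' \oplus M_{[t]}$ with the free summand $R(-t)$ appearing in $F_1$ is exactly the dual phenomenon to the one handled in Proposition~\ref{mainres1}: there we removed the $R(-t-4)$ from $P_4$ together with a ghost term $R(-t-4)$ in $F_2$, whereas here we want to absorb the $R(-t)$ summand of $F_1$ (coming from the $\tau_{[t]}$ end of the Koszul complex \eqref{Kosz}) and trade it for the adjacent Koszul terms $R(-t-1)^4$ and $R(-t-2)^6$.

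\textbf{The main construction.}

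First I would link $C$ via a complete intersection $Y$ of type $(f,g)$, chosen large enough that all relevant modules are generated in the appropriate degrees, to obtain a curve $D$. Using \eqref{resoluLink} and \eqref{resoluFreeLink}, the Buchsbaum component $M_{[t]}$ of $M(C)$ becomes, up to the twist by $f+g$ and $R$-dualization, a Buchsbaum component $M_{[t']}$ of $M(D)$ with $t' = f+g-4-t$ (since $M(D) \cong \Ext_R^4(M(C),R)(f+g-4)$ up to lower-order corrections, and the Koszul complex is self-dual). Crucially, the summand $R(-t)$ sitting in $F_1$ for $C$ corresponds under $R$-duality in \eqref{resoluLink} to a summand in the $F_1^\vee$ term, which after linkage lands as a free summand of the form $R(-t'-4)$ inside the second syzygy module of $I(D)$ — precisely the configuration $F_2 = F_2' \oplus R(-t'-4)$ needed for Proposition~\ref{mainres1} (in its dual incarnation). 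The hypothesis that $P_2$ contains no summand $R(-t)$, i.e.\ $\beta_{3,t}(M')=0$, translates under $\Ext_R^4(-,R)$ and the twist into the statement that the linked resolution has no competing ghost term obstructing the cancellation, which is what lets me apply the deformation cleanly on the $D$-side.

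\textbf{Transporting back and the minimality claim.}

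Once I have the generization $D'$ of $D$ with $M(D') \cong \Ext_R^4(M',R)$-twist and the simplified resolution, I link $D'$ back via a complete intersection of the same type $(f,g)$ to recover a curve $C'$ that is a generization of $C$ (linkage preserves the specialization $\sigma$ up to the fixed twist, which accounts for ``constant specialization'' in the statement, and it restores $M'$ as the Rao module up to graded isomorphism). The mapping-cone resolution of $I(C')$, after $R$-dualizing, yields exactly \eqref{resoluminMI40}: the removal of $R(-t)$ from $F_1$ on the $C$-side forces the neighboring Koszul strand $R(-t-1)^4 \to R(-t-2)^6$ to shift into the resolution as the displayed terms $Q_1 \oplus R(-t-1)^4$ and $P_3 \oplus F_2 \oplus R(-t-2)^6$. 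For the final sentence, I would argue as in the proof of Proposition~\ref{BettiStrata}: among all such generizations one may pass to a \emph{trivial generization} (Definition~\ref{genT}, Corollary~\ref{gencorT}) realizing the generic map in the relevant $\Hom$-space, and for the generic map every common free summand of the pairs $\{F_2, R(-t-1)^4\}$ and $\{R(-t-2)^6, Q_1\}$ that \emph{can} cancel actually \emph{does} cancel, since the locus where the corresponding connecting entry is nonzero is open and nonempty.

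\textbf{The main obstacle.}

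The hard part will be controlling exactly which summands cancel in degrees $t+1$ and $t+2$ — the minimality is genuinely only ``up to'' these cancellations. The delicate point is showing that after the two linkages the only possible non-minimality is confined to these two degrees and involves only matches between $F_2$ and $R(-t-1)^4$, respectively $R(-t-2)^6$ and $Q_1$; this requires tracking degrees carefully through both mapping-cone constructions and invoking the hypothesis $\beta_{3,t}(M')=0$ to rule out any interference from $M'$. I expect the cleanest route is to verify the complex property directly (as in the ``main step'' proof of Proposition~\ref{mainres1}, replacing a zero entry by an indeterminate $\lambda$ and adjusting the adjacent columns), rather than to rely on the linkage bookkeeping for the minimality statement — the linkage picture motivates the shape \eqref{resoluminMI40}, but the explicit deformation argument of Proposition~\ref{mainres1} is what actually certifies it, including the genericity needed for the last sentence.
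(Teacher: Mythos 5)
Your proposal follows essentially the same route as the paper's own proof: link $C$ to $D$ by a CI of type $(f,g)$ with $f,g \gg 0$, observe that the degree-$t$ generator $R(-t)$ of $F_1$ becomes, after dualizing in the mapping cone, exactly the ghost term $R(t-f-g)$ needed to invoke Proposition~\ref{mainres1} on the $D$-side (with $\beta_{3,t}(M')=0$ ensuring, via Remark~\ref{mainres2corr}, that no summand of $P_2^{\vee}$ interferes), then link back by a CI of the same type so that $C'$ is a generization of $C$, with the final cancellation statement obtained from Theorem~\ref{mainres}. One caveat on your closing remark: the $\lambda$-deformation of Proposition~\ref{mainres1} cannot be applied directly to the resolution \eqref{resoluMI40} of $I(C)$ as a substitute for the linkage bookkeeping, because that trick produces constant-postulation generizations while the generization asserted here drops a degree-$t$ generator (postulation changes); the paper's careful tracking of the two mapping cones is what actually confines the possible non-minimality to degrees $t+1$ and $t+2$.
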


The idea of a proof is to link $C$ to a curve $D$ by a CI of type $(f,g)$
where $f \ne t$ and $g \ne t$, then to take a generization of $D$ by using
Proposition~\ref{mainres1} because the degree-$t$ generator of $I(C)$ leads to
a ghost term for $D$ exactly where it appears in Proposition~\ref{mainres1}.
Finally we link back via a CI of the same type $(f,g)$ as before. Since there
are some technical challenges involved,
we give an example which, to a
certain extent, illustrate the proof.

\begin{example} \label{exe10} Take the minimal resolution of a smooth
  Buchsbaum curve $C$ of degree $6$ and genus $3$:
   \begin{equation*} 0 \rightarrow R(-6) \rightarrow R(-5)^{4}
    \rightarrow R(-4)^3 \oplus R(-2) \rightarrow I(C) \rightarrow 0 \, .
    \end{equation*} 
    It has the form as in the resolution of $I(C)$ in
    Proposition~\ref{newmainres} with $M'=0$ (and hence all $P_i=0$) and
    $t=2$. We claim there is a generization ``cancelling the leftmost term
    $R(-6)$ (together with $R(-5)^4$) against $R(-2)$'' at the cost of an
    increase in Betti numbers in degrees $3$ and
    $4$. 
    To see it we link $C$ to $D$ via a CI of type $(f,g)$ containing $C$. We
    take $f=g=4$ to simplify, but the argument works for any CI avoiding the
    quadric. 
    Let
    \begin{equation} \label{Et} E_t:=\coker \sigma_{[t]} \qquad where \qquad
      \sigma_{[t]}:= R(-t-4) \longrightarrow R(-t-3)^4
    \end{equation}  be given by the exact 
    sequence \eqref{Kosz}. That sequence also give the exactness of
    \begin{equation} \label{Etv} 0 \rightarrow R(t)
      \xrightarrow{\tau_{[t]}^{\vee}} R(t+1)^{4} \rightarrow R(t+2)^6
      \rightarrow E_t^{\vee} \rightarrow 0 \, .
    \end{equation} 
    The $E$-resolution of $I(C)$ is \ $ 0 \rightarrow  E_2
    \rightarrow R(-4)^3 \oplus R(-2) \rightarrow I(C) \rightarrow 0$, which
    through \eqref{resoluLink} yields
    \begin{equation} 0 \rightarrow R(-6) \oplus R(-4) \rightarrow
      E_2^{\vee}(-8) \rightarrow I(D) \rightarrow 0 \, 
    \end{equation} 
    by removing 2 redundant terms. Using \eqref{Etv} and the mapping cone
    construction as in \eqref{resoluFreeLink}, we get: 
    \begin{equation*} 0 \rightarrow R(-6) \rightarrow R(-5)^{4} \oplus R(-6)
      \rightarrow R(-4)^{5} \rightarrow I(D) \rightarrow 0 \, .
    \end{equation*} 
    This resolution has the form as in Proposition~\ref{mainres1} with $M'=0$
    and $t=2$. By that Proposition there is a generization $D'$ cancelling the
    ghost term $R(-6)$, and we get an ACM curve. Finally we link ``back'' via a
    general CI of type $(4,4)$, and we get a curve $C'$ with minimal
    resolution,
    \begin{equation*} 0 \rightarrow R(-4)^{3} \rightarrow R(-3)^{4}
      \rightarrow I(C') \rightarrow 0 \,
    \end{equation*} 
    which, thanks to \cite{K3}, Prop. 3.7, is a generization of the original
    curve $C$.
\end{example} 
Since we certainly do not want to have Proposition~\ref{newmainres} only for
curves whose Rao module $M(C)$ is a one-dimensional $k$-vector-space, we
consider curves with a Buchsbaum component in Proposition~\ref{newmainres},
making any diameter-1 curve the special case $M(C) \cong M_{[t]}^r$.

\begin{proof} [Proof (of Proposition~\ref{newmainres})] First we find the
  $E$-resolution of $I(C)$. Using \eqref{resoluMI} and \eqref{resoluE} and the
  notations from \eqref{MplussKosz}--\eqref{resoluMt}, we get the
  $E$-resolution
  \begin{equation} \label{resoluEEt} 0 \rightarrow E \oplus E_t \oplus F_2
    \rightarrow F_1 \rightarrow I(C) \rightarrow 0 \ , \qquad E:= \coker
   ( P_4  \xrightarrow{\sigma'} P_3 )
\end{equation}
where $E_t$ is given by \eqref{Et}. Now linking $C$ to $D$ via a CI of type
$(f,g)$, $f,g \gg 0$, the resolution \eqref{resoluLink} of $I(D)$ is given
by
\begin{equation} \label{middleT} 0 \rightarrow F_1^{\vee} \rightarrow E^{\vee}
  \oplus E_t^{\vee} \oplus F_2^{\vee} \oplus R(f) \oplus R(g) \rightarrow
  I(D)(f+g) \rightarrow 0 \, .
\end{equation}
The exact sequences \eqref{Etv} and $ 0 \to P_0^{\vee}
\xrightarrow{\tau_1^{\vee}} P_1^{\vee} \xrightarrow{\tau_2^{\vee}} P_2^{\vee}
\to E^{\vee}\to 0 $ yield an $R$-free resolution of the middle term of
\eqref{middleT}, which through the mapping cone construction as in
\eqref{resoluFreeLink} implies an $R$-free resolution:
\begin{equation} \label{mapsMCC} 0 \rightarrow P_0^{\vee} \oplus R(t)
  \xrightarrow{(\tau_1^{\vee},\tau_{[t]}^{\vee}) \oplus 0} P_1^{\vee} \oplus
  R(t+1)^4 \oplus F_1^{\vee} \xrightarrow{ \ \beta} F_1' \to I(D)(f+g)
  \rightarrow 0 \, 
\end{equation}
where $ F_1':=P_2^{\vee} \oplus R(t+2)^6 \oplus F_2^{\vee} \oplus R(f) \oplus
R(g)$, noticing that the morphism $ F_1^{\vee} \to P_2^{\vee} \oplus R(t+2)^6
$ corresponding to a submatrix of $\beta$ may be non-minimal because we in the
mapping cone construction need to lift the morphism $ F_1^{\vee} \to E^{\vee}
\oplus E_t^{\vee}$ to $ F_1^{\vee} \to P_2^{\vee} \oplus R(t+2)^6 $. Note also
that the mapping cone construction allows us to take the morphisms $
P_1^{\vee} \to F_1'$ deduced from $\beta$ (resp. the leftmost in
\eqref{mapsMCC}) as $\tau_2^{\vee} \oplus 0 \oplus 0 \oplus 0 \oplus 0$ (resp.
$(\tau_1^{\vee},\tau_{[t]}^{\vee}) \oplus 0$), see \cite{W}. The resolution
\eqref{mapsMCC} has the form as in Proposition~\ref{mainres1} because $
F_1^{\vee} = Q_1^{\vee} \oplus R(t)$. Hence there is a generization $D'$
cancelling the ghost term $R(t-f-g)$ from the resolution of $I(D')$ because
Remark~\ref{mainres2corr} allows to use Proposition~\ref{mainres1} for
non-minimal resolutions in the case $P_2^{\vee}$ does not contain $R(t)$. We
get (where now the induced $ Q_1^{\vee} \to P_2^{\vee} \oplus R(t+2)^6 $ may
be non-minimal):
\begin{equation} 0 \rightarrow P_0^{\vee} \xrightarrow{\tau_1^{\vee} \oplus 0
    \oplus 0} P_1^{\vee} \oplus R(t+1)^4 \oplus Q_1^{\vee}
  \xrightarrow{\ \alpha} F_1' \to I(D')(f+g) \rightarrow 0 \, .
\end{equation}
In addition the morphism $ R(t+1)^4 \rightarrow F_1' $ corresponding to a
submatrix of $\alpha$ may be non-minimal by Remark~\ref{mainres2corr}. Letting
$E_{\tau_1}:=\ker(P_1 \xrightarrow{\tau_1} P_0)$, 
then an $E$-resolution is
\begin{equation} 0 \rightarrow E_{\tau_1}^{\vee} \oplus
  R(t+1)^4 \oplus Q_1^{\vee} \rightarrow F_1' \to I(D')(f+g) \rightarrow 0 \, .
\end{equation}
Since $D'$ is a generization of $D$ with constant postulation, there is a
generization $Y' \supset D'$ of $Y$ of type $(f,g)$, such that the linked
curve $C'$ is a generization of $C$, cf. \cite{K3}, Prop.\;3.7 (the
assumptions of Prop.\;3.7 are weak, and they are at least satisfied if
$H^1(\sI_C(v)) = 0$ for $v = f, g, f-4$ and $g - 4$, which we may assume by
$f,g \gg 0$). Using \eqref{resoluLink}, we get the resolution
\begin{equation} 0 \rightarrow  F_1'^{\vee} \to E_{\tau_1} \oplus
  R(-t-1)^4 \oplus Q_1 \oplus R(-f) \oplus R(-g) \to I(C') \rightarrow 0 \, .
\end{equation}
Noting that $0 \to P_4 \xrightarrow{\sigma'} P_3 \to P_2 \to E_{\tau_1} \to 0$
is exact and letting the lifting of $ F_1'^{\vee} \to E_{\tau_1}$ to $
F_1'^{\vee} \to P_2$ be the natural one (the form of $ P_1^{\vee} \to F_1'$
above allows us to take the dual of $ F_1'^{\vee} \to P_2$ as $id \oplus 0
\oplus 0 \oplus 0 \oplus 0$, $id$ the identity), the mapping cone construction
yields (cf.\;\eqref{resoluFreeLink})
\begin{equation} 0 \rightarrow P_4 \xrightarrow{\sigma' \oplus 0}  P_3 \oplus
  F_1'^{\vee} \to P_2 \oplus R(-t-1)^4 \oplus Q_1 \oplus R(-f) \oplus
  R(-g) \to I(C') \rightarrow 0 \, .
\end{equation} If we now replace $ F_1'$ with its defining expression, we get
exactly the resolution of the proposition provided we can show that the
repeated free summand $P_2 \oplus R(-f) \oplus R(-g)$ is redundant. This is
obvious for $P_2$. Note that in the resolution where  $P_2$ is deleted, the
possibly non-minimality of $ Q_1^{\vee} \to P_2^{\vee} \oplus
R(t+2)^6 $ reduces to a possibly non-minimality of $ Q_1^{\vee} \to 
R(t+2)^6 $ and moreover, ghost terms between $Q_1$
and $F_2$ remain ghost terms (easily seen from  the form of  $ F_1'^{\vee}
\to P_2$ above). Finally even though it is rather easy to see
that $R(-f) \oplus R(-g)$ is redundant because  $f,g \gg 0$, we choose instead
to use the idea in 
the proof of Theorem~\ref{mainres} which imply  
that this free summand  becomes at least redundant after a 
generization (and no ghost terms between $Q_1$
and $F_2$ become redundant), whence we get the
desired $R$-free resolution. We also get the 
minimality of the resolution in degree $\ne t+1,t+2$ by observing that in
this proof, there are eventually only two places where the resolution may be
non-minimal, namely for the above mentioned morphisms  $ Q_1^{\vee} \to 
R(t+2)^6 $ and  $R(t+1)^4 \rightarrow F_2^{\vee}$. Since we get the final
statement from  Theorem~\ref{mainres}, we are done.
\end{proof}

\begin{corollary} \label{mainres2} Let $M(C) \cong M' \oplus M_{[t]}$ as
  graded $R$-modules, let $(a_1,a_2,b_1,b_2)$ be the corresponding 4-tuple and
  suppose $\beta_{3,t}(M')=0$. If $a_2 \neq 0$ (recall $a_2:=\beta_{1,t}$),
  then there is a generization $C'$ of $C$ in $\HH(d,g)$ with constant
  specialization and $M'$ whose 4-tuple
  is \\[2mm]
  {\rm (P2)} \hspace{5cm} $(a_1,a_2-1,b_1,b_2) \ .$ \\[2mm] Moreover for $i
  \in \{0,1\}$, $h^i(\sI_{C'}(v)) = h^i(\sI_{C}(v))$ for $v \neq t$ and
  $h^i(\sI_{C'}(t)) = h^i(\sI_{C}(t))-1$.
\end{corollary}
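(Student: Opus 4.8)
The plan is to deduce this directly from Proposition~\ref{newmainres}, reading off the effect on the four relevant graded Betti numbers and then converting the information about $M'$ and the specialization into the claimed cohomological statement. First I would verify the hypotheses of Proposition~\ref{newmainres}. Since $a_2 = \beta_{1,t} \neq 0$, the module $F_1$ in the minimal resolution \eqref{resoluMt} of $I(C)$ contains at least one free summand $R(-t)$, so we may write $F_1 \cong Q_1 \oplus R(-t)$; together with the standing assumption $\beta_{3,t}(M') = 0$, this is exactly what Proposition~\ref{newmainres} requires. Applying it produces a generization $C'$ in $\HH(d,g)$ with constant specialization and $M(C') \cong M'$, whose ideal has the resolution \eqref{resoluminMI40}, minimal away from degrees $t+1$ and $t+2$; I would moreover invoke the final sentence of that proposition to select the generization for which the resolution is genuinely minimal (all common summands of $\{F_2,R(-t-1)^4\}$ and $\{R(-t-2)^6,Q_1\}$ cancelled).

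Next I would read the 4-tuple of $C'$ off this minimal resolution. The generators of $I(C')$ come from $Q_1 \oplus R(-t-1)^4$; writing $F_1 \cong Q_1' \oplus R(-t-4)^{a_1} \oplus R(-t)^{a_2}$ as in \eqref{bettiMI} (note the clash of notation: this $Q_1'$ has no degree-$t$ or degree-$(t+4)$ summand, whereas the $Q_1$ of Proposition~\ref{newmainres} is $F_1$ with only one copy of $R(-t)$ removed, so $Q_1 \cong Q_1' \oplus R(-t-4)^{a_1} \oplus R(-t)^{a_2-1}$), the degree-$t$ and degree-$(t+4)$ generators are unchanged except for the one lost copy of $R(-t)$; hence $\beta_{1,t+4}(C') = a_1$ and $\beta_{1,t}(C') = a_2-1$. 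For the second syzygies I would observe that, in the Rao form of $C'$ determined by the minimal resolution of $M(C') \cong M'$, the part of $P_3 \oplus F_2 \oplus R(-t-2)^6$ not coming from $L_3 = P_3$ is $F_2 \oplus R(-t-2)^6$; since the extra term $R(-t-2)^6$ sits in degree $t+2$, it contributes nothing in degrees $t$ or $t+4$, so $b_1(C') = b_1$ and $b_2(C') = b_2$. The crucial point, and what makes the bookkeeping clean, is that the only cancellations permitted by Proposition~\ref{newmainres} occur in degrees $t+1$ and $t+2$, hence leave every graded Betti number in degrees $t$ and $t+4$ untouched; this yields the 4-tuple $(a_1, a_2-1, b_1, b_2)$.

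Finally I would establish the cohomological statement. The case $i=1$ is immediate from $M(C') \cong M'$ and $M(C) \cong M' \oplus M_{[t]}$: since $M_{[t]}$ is $k$ concentrated in degree $t$, we get $h^1(\sI_{C'}(v)) = \dim M'_v = h^1(\sI_C(v))$ for $v \neq t$ and $h^1(\sI_{C'}(t)) = h^1(\sI_C(t)) - 1$. For $i=0$ I would use that the alternating sum $h^0(\sI_C(v)) - h^1(\sI_C(v)) + h^2(\sI_C(v))$ is a function of $v$ alone on $\HH(d,g)$, since it differs from the (constant in $\HH(d,g)$) Euler characteristic $\chi(\sI_C(v))$ only by $h^3(\sI_C(v)) = h^3(\sO_{\proj{}}(v))$, and that $h^2(\sI_C(v)) = h^1(\sO_C(v)) = \sigma_C(v)$ from $0 \to \sI_C(v) \to \sO_{\proj{}}(v) \to \sO_C(v) \to 0$. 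As $C'$ has constant specialization, $\sigma_{C'}(v) = \sigma_C(v)$, so $h^0(\sI_{C'}(v)) - h^0(\sI_C(v)) = h^1(\sI_{C'}(v)) - h^1(\sI_C(v))$, and the $i=0$ statement follows from the $i=1$ computation. I expect the only delicate point to be the careful separation of the possibly non-minimal degrees $t+1,t+2$ from the degrees $t, t+4$ that govern the 4-tuple.
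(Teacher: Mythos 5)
Your proof is correct and follows exactly the route the paper intends: Corollary~\ref{mainres2} is stated there as an immediate consequence of Proposition~\ref{newmainres} with no separate argument, and your write-up supplies precisely the bookkeeping the paper leaves implicit (checking that $a_2\neq 0$ gives $F_1\cong Q_1\oplus R(-t)$, reading the 4-tuple off \eqref{resoluminMI40} using that cancellations can occur only in degrees $t+1,t+2$ and that $P_3$ does not count toward $b_1,b_2$, and converting $M(C')\cong M'$ plus constant specialization into the $h^0$ and $h^1$ statements via $\chi(\sI_C(v))$ and $h^2(\sI_C(v))=\sigma_C(v)$).
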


\begin{remark} Strictly speaking we need an extension of the notion of a
  4-tuple for the generization $C'$ of $C$ because $ M_{[t]}$ disappear for
  $C'$ (e.g. $C'$ may be ACM). We have, however, the number $t$ attached to
  $C$ and so it is clear which Betti numbers decrease.
    \end{remark}

\section{The graded Betti numbers of diameter-1 curves}
Since our results become quite complete for a diameter-$1$ (Buchsbaum) curve
$C \subseteq \proj{3}$, we now consider such curves closely. The main result
of this section describes ``all'' generizations of a diameter-1 curve $C$ in
$\HH(d,g)$, from the point of view of describing their minimal resolutions. In
other word, we give essentially all possible choices of the graded Betti
numbers of a generization of a diameter-1 curve. In particular we determine
the form of the minimal resolutions of all generic curves of the irreducible
components of $\HH(d,g)$ that contain $(C)$ and we find how many such
components exist. Note that these results somehow complete works of
Chang (\cite{C}, Ex.\;1, \cite{W2}, Thm.\;4.1, \cite{W1}) which, to a large
degree, determine the set of graded Betti numbers for which there exists (even
smooth connected) diameter-1 curves.

For a diameter-$1$ curve $C \subseteq \proj{3}$, we have $M(C) \cong
M_{[t]}^r$ with $t=c$, and a 5-tuple $(a_1,a_2,b_1,b_2,r) =
(\beta_{1,c+4},\beta_{1,c}, \beta_{2,c+4}, \beta_{2,c}, \beta_{3,c+4})$. The
minimal resolution (the Rao form) of $I(C)$ is
\begin{equation} \label{resoluMIR} 0 \rightarrow R(-c-4)^{r} \xrightarrow{\
    \sigma_{[c]} \oplus 0 \ } R(-c-3)^{4r} \oplus F_2 \rightarrow F_1
  \rightarrow I(C) \rightarrow 0 \, .
    \end{equation}

  \begin{remark} \label{mainres4} Suppose $\diam M = 1$, i.e. $M(C)
  \cong M_{[c]}^r$ and let $\beta_{j,i}:=\beta_{j,i}(C)$.

  {\rm (a)} By Remark~\ref{mainres2corr} there is a generization given by {\rm
    (P1)}, see Corollary~\ref{mainres12}, whose graded Betti numbers do not
  change except for $\beta_{3,c+4}$ and $\beta_{2,c+4}$, which both decrease
  by $1$, and $\beta_{1,c+3}$ and $\beta_{2,c+3}$, which may decrease by at most
  $4$, keeping, however, $\beta_{1,c+3}- \beta_{2,c+3}$ unchanged. Moreover if
  we combine with Theorem~\ref{mainres}, we may suppose that $\beta_{2,c+3}$
  decreases by exactly $ \min \{ \beta_{1,c+3},4\}$ after possibly further
  generizations (i.e. using  {\rm (Q(c$\,+\,3$))} of Corollary~\ref{mainrecor}).

  {\rm (b)} By Proposition~\ref{newmainres} we can describe the possible
  changes of the graded Betti numbers of a generization given by {\rm (P2)} in
  detail. Indeed the graded Betti numbers of a generization as in
  Corollary~\ref{mainres2} do not change except $\beta_{3,c+4}$ and
  $\beta_{1,c}$, which both decrease by $1$, and $\beta_{1,v}$ and $\beta_{2,v}$
  for $v \in \{c+3,c+2,c+1\}$ for which $\beta_{1,c+1}- \beta_{2,c+1}$
  increases by $4$, $\beta_{2,c+2}- \beta_{1,c+2}$ increases by $6$ and
  $\beta_{2,c+3}$ decreases by $4$. Moreover combining with
  Theorem~\ref{mainres}, we may suppose that $\beta_{1,c+1}$ increases by $4-
  \min \{ \beta_{2,c+1},4\}$ and $\beta_{2,c+1}$ decreases by $\min \{
  \beta_{2,c+1},4\}$ while $\beta_{2,c+2}$ increases by $6- \min \{
  \beta_{1,c+2},6\}$ and $\beta_{1,c+2}$ decreases by $\min \{
  \beta_{1,c+2},6\}$ after possibly further generizations.

  {\rm (c)} Combining {\rm (a)} and {\rm (b)} by mainly using {\rm (Pi)} $p_i$
  times for $i=1,2$, we get the existence of a generization $C'$ of $C$ in
  $\HH(d,g)$ whose Betti numbers  $ \{ \beta'_{j,i}\}$ satisfy: \\[-3mm]

  \quad  $\beta'_{1,c+4} = \beta_{1,c+4}$\,, \ \ $\beta'_{2,c+4} =
  \beta_{2,c+4}-p_1$\,, \quad  \ $\beta'_{3,c+4} = \beta_{3,c+4}-p_1-p_2$\,,

\quad $\beta'_{1,c+3} = \beta_{1,c+3}-\min\{4p_1, \beta_{1,c+3}\}$\,,
  \qquad \quad $\beta'_{2,c+3} = \beta_{2,c+3}-4p_2-\min\{4p_1,
  \beta_{1,c+3}\}$,

  \quad $\beta'_{1,c+2} = \beta_{1,c+2}-\min\{6p_2, \beta_{1,c+2}\}$\,,
  \qquad \quad $\beta'_{2,c+2} = \beta_{2,c+2}+6p_2-\min\{6p_2,
  \beta_{1,c+2}\}$,

  \quad $\beta'_{1,c+1} = \beta_{1,c+1}+4p_2-\min\{4p_2, \beta_{2,c+1}\}$\,,
  \  $\beta'_{2,c+1} = \beta_{2,c+1}-\min\{4p_2, \beta_{2,c+1}\}$,

  \quad $\beta'_{1,c} = \beta_{1,c}-p_2$\,, \ $\beta'_{2,c} = \beta_{2,c}$,
  \qquad \qquad \quad \,\ $\beta'_{j,i} = \beta_{j,i}$\, \ for $j=1,2$ and \
  every
  $i \notin B$\,, \\[1mm]
  where $B= \{c,c+1,c+2,c+3,c+4\}$. In particular the 5-tuple of $C'$ is
  \\[-3mm]
  \[ (\beta_{1,c+4},\beta_{1,c}-p_2, \beta_{2,c+4}-p_1, \beta_{2,c},
  \beta_{3,c+4}-p_1-p_2)\,.\]
\end{remark}

Now we come to the main theorems of the paper. But first we need a definition.

\begin{definition} \label{defrep} Let $C$ be a diameter-$1$ curve in
  $\proj{3}$, $(C) \in \HH(d,g)$, and let $J$ be a subset of the natural
  numbers $ \mathbb N$. Then a generization $C'$ of $C$ in $\HH(d,g)$ that is
  given by repeatedly using some of the generizations furnished by {\rm (P1)},
  {\rm (P2)} and {\rm (Qj)} for $j \in J$ in some order, is called a
  generization in $\HH(d,g)$ generated by ${\rm (PQ_J)}$. If only {\rm (Qj)},
  $j \in J$ is used, we call it a generization generated by ${\rm (Q_J)}$. We
  omit the index $J$ in ${\rm (PQ_J)}$ and ${\rm (Q_J)}$ in the case $J=
  \mathbb N$. Moreover we allow $J = \emptyset$ in the definitions, in which
  case $C'$ is a trivial generization of $C$. 
\end{definition}

Since the generizations given by (Pi) and (Qj) composed with a trivial
generization (Definition~\ref{genT}), is again a generization given by (Pi)
and (Qj) respectively, we get that e.g. a generization $C'$ of $C$ generated
by ${\rm (PQ_J)}$ is, up to a trivial generization, independent of the order
in which we use (Pi) and (Qj). Indeed if we change the order we still get a
generization $C''$ of $C$ in $\HH(d,g)$ in which $C''$ and $C'$ belong to the
same Betti stratum, and we conclude from Corollary~\ref{gencorT}.

Now we can prove that any generization of $C$ in $\HH(d,g)$ is generated by
${\rm (PQ)}$, up to the removal of some ghost terms between $F_2$ and $F_1$ in
the degrees $c+1,c+2,c+3$ of \eqref{resoluMIR}.
\begin{theorem} \label{newmain} Let $C \subseteq \proj{3}$ be a Buchsbaum
  curve of diameter one and let $C'$ be any generization of $C$ in $\HH(d,g)$.
  If $A= \{c+1,c+2,c+3\}$ then there exists a generization $C''$ of $C'$
  generated by ${\rm (Q_A)}$ such that $C''$ is a generization of $C$ in
  $\HH(d,g)$ generated by ${\rm (PQ)}$.
\end{theorem}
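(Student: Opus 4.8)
The plan is to read off from the cohomology of $C'$ exactly how many times each of (P1) and (P2) must be applied to $C$, to assemble from these counts a (PQ)-generization $C''$ of $C$ lying in the same irreducible family as $C'$, and finally to descend from $C'$ to $C''$ by removing ghost terms in the three degrees of $A$. First I would pin down the invariants of $C'$. Since $C$ is diameter one with $M(C)\cong M_{[c]}^r$, upper semi-continuity of $h^1(\sI_{-}(v))$ forces $h^1(\sI_{C'}(v))=0$ for $v\neq c$ and $h^1(\sI_{C'}(c))=:r'\le r$, so $C'$ is again of $\diam M\le 1$ with $M(C')\cong M_{[c]}^{r'}$. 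Writing $\gamma,\rho,\sigma$ for postulation, deficiency and specialization (recall $\sigma(v)=h^2(\sI(v))$), semi-continuity of $h^0$ and $h^2$ together with constancy of $\chi(\sI(v))=\gamma(v)-\rho(v)+\sigma(v)$ in the family shows $\gamma_{C'}(v)=\gamma_C(v)$ and $\sigma_{C'}(v)=\sigma_C(v)$ for every $v\neq c$, while at $v=c$ I set $p_2:=\gamma_C(c)-\gamma_{C'}(c)\ge 0$ and $p_1:=\sigma_C(c)-\sigma_{C'}(c)\ge 0$; constancy of $\chi$ then gives $p_1+p_2=r-r'$. Thus $p_1,p_2$ are determined by $C'$ alone.

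Next I would construct the target $C''$. By Remark~\ref{mainres4}(c), applied with these $p_1,p_2$ and with all available ghost terms in the degrees of $A$ removed (maximal use of (Qj), $j\in A$), there is a (PQ)-generization $C''$ of $C$ with explicitly given minimal graded Betti numbers; since (P1) and (P2) lower $\sigma(c)$ resp.\ $\gamma(c)$ each by one and leave all cohomology outside degree $c$ fixed, the curve $C''$ has the same $\gamma$, $\rho$ and hence $\sigma$ as $C'$, and the same Rao module $M_{[c]}^{r'}$. (The inequalities $p_1\le b_1$ and $p_2\le a_2$ needed to perform the (P)-moves hold because the Betti numbers of a $\diam M\le 1$ curve in the extreme degrees $c$ and $c+4$ are determined by its cohomology.) Consequently both $C'$ and $C''$ lie in the same stratum $\HH_{\gamma,\rho}$, which is irreducible with fixed Rao module by Bolondi's theorem (Proposition~\ref{BettiStrata}, Remark~\ref{BettiStrata2}); moreover $C''$, obtained with maximal (Q$_A$), realizes the minimal Betti numbers of this stratum, so it is a generic curve of $\HH_{\gamma,\rho}$ and hence a generization of $C'$ inside $\HH_\gamma$ (up to a trivial generization, Corollary~\ref{gencorT}).

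It remains to exhibit this generization of $C'$ as generated by (Q$_A$). By Corollary~\ref{semic} we have $\beta_{j,i}(C'')\le\beta_{j,i}(C')$ for all $i,j$, and the heart of the matter is that the only Betti numbers that can differ between $C'$ and the generic curve $C''$ of $\HH_{\gamma,\rho}$ are $\beta_{1,j}$ and $\beta_{2,j}$ with $j\in A=\{c+1,c+2,c+3\}$, these differing in ghost-term pairs (surviving common summands $R(-j)$ of $F_1$ and $F_2$). Granting this, each unit of the difference is one ghost term removable by (Qj) with $j\in A$, and the removals may be carried out successively, the remaining excess summands staying ghost terms at each step (as in the argument of Proposition~\ref{BettiStrata}); this displays $C''$ as a generization of $C'$ generated by (Q$_A$), completing the proof.

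The main obstacle is precisely the confinement statement of the last paragraph. I would prove it by comparing the minimal resolutions \eqref{resoluMIR} across $\HH_{\gamma,\rho}$: with $\gamma,\rho$ fixed, the summand $R(-c-4)^{r'}$, the Koszul summand $R(-c-3)^{4r'}$, the module part $E\cong E_c^{r'}$ in the notation of \eqref{Et}, and the ghost-free parts $Q_1,Q_2$ of $F_1,F_2$ are all constant across the stratum, so the only freedom is the number of common $R(-j)$ summands of $F_1$ and $F_2$ that survive the splicing map $\varphi$. By the degree bookkeeping recorded in Remark~\ref{mainres4}(b) (which tracks how $M_{[c]}$ enters the resolution through Proposition~\ref{newmainres}), the terms that $M_{[c]}$ can contribute to such cancellations occupy the middle degrees $c+1,c+2,c+3$ of the Koszul resolution of $M_{[c]}$, the extreme degrees $c$ and $c+4$ being rigid and governed by the (P)-moves; hence every movable ghost term lies in $A$.
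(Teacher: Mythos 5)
Your first paragraph is sound and matches the paper's opening step: the counts $p_2=\gamma_C(c)-\gamma_{C'}(c)$ and $p_1=\sigma_C(c)-\sigma_{C'}(c)$, with $p_1+p_2=r-r'$, are exactly the quantities the paper extracts ($\Delta\gamma(c)$ and $\Delta\sigma(c)$). After that there are two genuine gaps.

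First, your justification of the feasibility inequalities $p_2\le a_2$ and $p_1\le b_1$ --- that ``the Betti numbers of a $\diam M\le 1$ curve in the extreme degrees $c$ and $c+4$ are determined by its cohomology'' --- is false. Precisely when $a_2\cdot b_2\neq 0$ (resp.\ $a_1\cdot b_1\neq 0$) the minimal resolution has a ghost term $R(-c)$ (resp.\ $R(-c-4)$), and the move (Qc) (resp.\ (Q(c$\,+\,4$))) changes $a_2$ (resp.\ $b_1$) while preserving both postulation and Rao module (Theorem~\ref{mainres}); so curves with identical cohomology can have different $a_2$ and $b_1$. The paper proves these inequalities by a real argument, namely \eqref{ine}: it combines the identity $h^0(\sI_{C'}(c))-a_2'+b_2'=h^0(\sI_C(c))-a_2+b_2$ (coming from exactness of the two minimal resolutions in degrees $\le c$ and equality of the postulations below $c$) with the semi-continuity $b_2'\le b_2$ of Proposition~\ref{newmainprop}. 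Your proposal has no substitute for this step.

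Second, and more seriously, your confinement claim is wrong, and with it your construction of $C''$. Inside $\HH_{\gamma,\rho}$ the Betti numbers outside $A$ are \emph{not} constant: ghost terms in degree $c$, in degree $c+4$, and in any degree outside $\{c,\dots,c+4\}$ may be present or absent within constant cohomology, because every (Qj) preserves $\gamma$ and $\rho$. What is true is only the one-sided statement of Proposition~\ref{newmainprop}: writing $C_P$ for the curve obtained from $C$ by the (P)-moves alone, one has $\beta_{i,j}(C')\le\beta_{i,j}(C_P)$ for $j\notin A$, and this can be strict --- for instance if $C$ has a ghost term in some degree $j_0\notin\{c,\dots,c+4\}$ while $C'$ is the generic curve of a component, which has no ghost term outside degree $c+3$ by Corollary~\ref{cormaxcomp}. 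In that case your $C''$, built from $C$ using only (P)-moves and $(\mathrm{Q}_A)$, satisfies $\beta_{i,j_0}(C'')=\beta_{i,j_0}(C_P)>\beta_{i,j_0}(C')$; but no generization of $C'$ generated by $(\mathrm{Q}_A)$ can have a Betti number outside $A$ exceeding that of $C'$, since $(\mathrm{Q}_A)$-moves and trivial generizations leave those numbers unchanged. So your $C''$ simply fails to be a $(\mathrm{Q}_A)$-generization of $C'$ in such cases, and for the same reason it need not be generic in $\HH_{\gamma,\rho}$. The paper repairs exactly this point: when the inequality outside $A$ is strict, Peeva's equality \eqref{bettieq} forces both $\beta_{1,j}(C_P)$ and $\beta_{2,j}(C_P)$ to be nonzero, i.e.\ a ghost term of $C_P$, which is then removed by a $(\mathrm{Q}_{\mathbb N - A})$-generization $D$ of $C_P$; only in the degrees of $A$, where semi-continuity genuinely fails in both directions, are ghost terms removed symmetrically from $C'$ and from $D$, after which the two curves land in the same (irreducible) Betti stratum and are identified up to a trivial generization via Corollary~\ref{gencorT}. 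This asymmetric treatment of $A$ versus its complement is the essential content of the theorem, and it is what your proposal misses.
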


The proof relies on the following semi-continuity result:
\begin{proposition} \label{newmainprop} Let $C$ be a Buchsbaum curve in
  $\proj{3}$ of diameter one. If $v \notin \{c+1,c+2,c+3 \}$, then the Betti
  numbers $ \beta_{1,v}$ and $ \beta_{2,v}$ are upper semi-continuous. In
  particular the 5-tuple $( \beta_{1,c+4},
  \beta_{1,c},\beta_{2,c+4},\beta_{2,c},\beta_{3,c+4})$ is upper
  semi-continuous, i.e. each of these 5 numbers do not increase under
  generization.
\end{proposition}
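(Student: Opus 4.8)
The plan is to establish upper semi-continuity of the individual Betti numbers $\beta_{1,v}$ and $\beta_{2,v}$ for $v\notin\{c+1,c+2,c+3\}$ by reducing to Corollary~\ref{semic}, which gives semi-continuity of all graded Betti numbers inside $\HH_\gamma$ (and hence inside $\HH_{\gamma,\rho}$). The obstruction to applying Corollary~\ref{semic} directly on all of $\HH(d,g)$ is the hypothesis ${_0\!\Hom_R}(I(C),M(C))=0$, which may fail; so the first thing I would do is analyze where a generization $C'$ of $C$ can fail to preserve the postulation $\gamma$. The key structural fact is the Rao form \eqref{resoluMIR}: for a diameter-$1$ curve, $M(C)\cong M_{[c]}^r$, so the leftmost term of the minimal resolution is $R(-c-4)^r$ and the next term is $R(-c-3)^{4r}\oplus F_2$. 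Because $M$ is concentrated in the single degree $c$, the only degrees $v$ in which the resolution of $I(C)$ can differ from that of an ACM curve with the same postulation are the ``Koszul band'' $B=\{c,c+1,c+2,c+3,c+4\}$ coming from \eqref{Koszul}.

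First I would treat the degrees $v\notin B$. Here the free summands $R(-v)^{\beta_{1,v}}$ and $R(-v)^{\beta_{2,v}}$ of \eqref{resoluMIR} come entirely from $F_1$ and $F_2$ and are unaffected by the Rao module; these are exactly the Betti numbers of the ``$\gamma$-part'' of the resolution. For such $v$ I would argue that $\beta_{1,v}$ and $\beta_{2,v}$ are determined by the postulation together with $h^1$ in neighbouring degrees, and that along any generization $C'$ of $C$ in $\HH(d,g)$ one may pass to a flat subfamily of constant postulation: by semi-continuity of $\gamma$ one restricts to an open set where $\gamma$ is locally constant, and since $M$ is supported only in degree $c$, the deficiency $\rho$ is also controlled outside $B$. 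Then Corollary~\ref{semic} applies in $\HH_\gamma$ and yields $\beta_{j,v}(C')\le\beta_{j,v}(C)$ for $j=1,2$ and $v\notin B$.

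Next I would handle the two remaining degrees in $B$ that are asserted to be semi-continuous, namely $v=c$ and $v=c+4$. The point is that $\beta_{3,c+4}=r=\dim_k M$ is itself semi-continuous because the dimension of the Rao module cannot jump up under generization (the Rao module is a biliaison invariant up to twist, and $h^1(\sI_C(c))$ is upper semi-continuous as an $h^1$). Given that $r$ is semi-continuous, the entries $\beta_{1,c+4},\beta_{2,c+4}$ and $\beta_{1,c},\beta_{2,c}$ in degrees $c$ and $c+4$ are the ``ghost-term'' Betti numbers described in \eqref{bettiMI}; I would show they are controlled by $r$ together with postulation data via the explicit shape of \eqref{resoluMIR}, so that each does not increase under generization. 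Concretely, $(a_1,a_2,b_1,b_2)=(\beta_{1,c+4},\beta_{1,c},\beta_{2,c+4},\beta_{2,c})$ and the minimal resolution forces these to be extractable from $\gamma$, $\rho$ and $r$ in the two extreme degrees of the band, where no cancellation against the Koszul columns in the middle degrees $c+1,c+2,c+3$ can interfere.

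The main obstacle I expect is precisely the interplay in the middle degrees $c+1,c+2,c+3$: there the Koszul summands $R(-c-3)^{4r},R(-c-2)^{6r},R(-c-1)^{4r}$ can be cancelled against free summands of $F_1$ and $F_2$, producing the ghost terms, and it is exactly there that semi-continuity genuinely fails (as Example~\ref{ex2} already illustrates for a non-Buchsbaum situation). So the heart of the argument is to isolate the degrees $c,c+4$ and $v\notin B$, where the Betti numbers are rigidly tied to $\gamma,\rho,r$, from the three middle degrees where the extra freedom lives. Once semi-continuity of each of $\beta_{1,v},\beta_{2,v}$ for $v\notin\{c+1,c+2,c+3\}$ is established, the final statement about the $5$-tuple is immediate, since all five of its entries $(\beta_{1,c+4},\beta_{1,c},\beta_{2,c+4},\beta_{2,c},\beta_{3,c+4})$ involve only degrees $c$ and $c+4$ together with $r$, each of which has just been shown not to increase under generization.
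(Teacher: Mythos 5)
Your reduction to Corollary~\ref{semic} does not work, and the gap is exactly the content of the proposition. Corollary~\ref{semic} compares $C$ with generizations of $C$ \emph{inside} $\HH_{\gamma}$, or else requires ${_0\!\Hom_R}(I(C),M(C))=0$, which for a diameter-one curve fails precisely when $\beta_{1,c}\neq 0$. But a generization $C'$ of $C$ in $\HH(d,g)$ need not preserve postulation: the operation (P2) of Corollary~\ref{mainres2} (Proposition~\ref{newmainres}) produces generizations with $h^0(\sI_{C'}(c))=h^0(\sI_{C}(c))-1$. Your proposal to ``restrict to an open set where $\gamma$ is locally constant'' discards the special point $(C)$: on such an open set you compare $C'$ only with curves having the postulation of $C'$, not with $C$, so no inequality $\beta_{j,v}(C')\le\beta_{j,v}(C)$ follows. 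This is not a removable technicality. Example~\ref{ex2} concerns a diameter-one curve ($c=4$), and there semi-continuity genuinely fails in the degrees $c+1,c+2$ exactly because the postulation jumps at degree $c$; any correct proof must supply a mechanism that survives the postulation jump, which is what the paper's argument provides and yours does not.

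The second gap is your treatment of the degrees $c$ and $c+4$: the claim that $\beta_{1,c},\beta_{2,c},\beta_{1,c+4},\beta_{2,c+4}$ are ``extractable from $\gamma$, $\rho$ and $r$'' is false, because these numbers count ghost terms, which are invisible to the cohomology functions. By Theorem~\ref{mainres} there exist generizations with \emph{constant} postulation and \emph{constant} Rao module which nevertheless change these numbers; for instance the curve of Example~\ref{Wa}(a), with 5-tuple $(1,0,2,0,1)$, has a constant-cohomology generization with 5-tuple $(0,0,1,0,1)$, so two curves with identical $\gamma,\rho,r$ have different $\beta_{1,c+4}$ and $\beta_{2,c+4}$. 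Hence no formula expressing these Betti numbers through $(\gamma,\rho,r)$ can exist, and your argument for the extreme degrees collapses. What the paper actually does is prove the cohomological identity $\beta_{1,v}=h^1(\sI_C\otimes\widetilde{\Omega}(v))$ for all $v\notin\{c+1,c+2,c+3\}$, using the $\Omega$-resolution \eqref{newex13} of a Buchsbaum curve together with the vanishings \eqref{newexom} and \eqref{newexom2} (whose failure in low twists is precisely what exempts the three middle degrees). This exhibits $\beta_{1,v}$ as the $h^1$ of a coherent sheaf, which is upper semi-continuous in \emph{any} flat family, irrespective of how the postulation behaves; the numbers $\beta_{2,v}$ are then handled by linkage via $\beta_{2,v}(C)=\beta_{1,c+c(D)+4-v}(D)$, and $\beta_{3,c+4}=r=h^1(\sI_C(c))$ directly. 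Some cohomological interpretation of this kind, rather than a Hilbert-function computation, is the missing idea in your proof.
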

\begin{remark} \label{newmainproprem} If $C$ is ACM, then the Betti numbers $
  \beta_{1,v}$ and $ \beta_{2,v}$ are upper semi-continuous for any integer v.
  This is well known, but the result also follows from Corollary~\ref{semic}.
\end{remark}
\begin{proof} We will prove the result by using so-called $\Omega$-resolutions
  of a Buchsbaum curve (\cite{C}, it is really the dual of an $E$-resolution
  involving $M_{[t]}$ for $t=0$). Recall that $\Omega$ is by definition given
  by the exact sequences
  \begin{equation} \label{newex11} 0 \rightarrow \widetilde {\Omega}
    \rightarrow \sO_{\proj{}}(-1)^{4} \rightarrow \sO_{\proj{}} \rightarrow 0\
    \ {\rm and} \ \ 0 \rightarrow R(-4) \rightarrow
    R(-3)^{4} \rightarrow R(-2)^{6} \rightarrow \Omega
    \rightarrow 0
\end{equation}
which we deduce from the Koszul resolution of the regular sequence $\{
X_0,X_1,X_2,X_3\} $, whence
\begin{equation} \label{newexom}
 H_{*}^2(\widetilde \Omega)= 0 \ , H^1(\widetilde \Omega (0))
\simeq k \ , \ H^1(\widetilde \Omega(v))= 0
\ {\rm for} \ v \neq 0 \ .
\end{equation}
Note also that $\widetilde \Omega(2)$ is 0-regular and generated by global
sections. It follows that if we tensor the $1^{st}$ exact sequence of
\eqref{newex11} by $\widetilde \Omega(v)$ and take cohomology, we get
\begin{equation} \label{newexom2}
 H^1(\widetilde \Omega^{\otimes 2} (v)) = 0
\ \ \ {\rm for} \ v \neq 1 \ {\rm and} \ 2 \ .
\end{equation}
Since $r= h^1(\sI_C(c))$, the $\Omega$-resolution of $C$ of
Proposition~\ref{newmainprop}, twisted by $c$, is given
by 
\begin{equation} \label{newex13} \ \ 0 \rightarrow G_2 \rightarrow \Omega^r
  \oplus G_1 \rightarrow I(C)(c) \rightarrow 0
\end{equation}
where $ \ G_i$ for $i=1,2$ is free and the induced map $G_2 \to G_1$ is
minimal. Using that a minimal resolution of $ \Omega^r$ is just a direct sum
of the resolution given in \eqref{newex11}, we get by the mapping cone
construction the following free resolution of $I(C)(c)$ (cf. the proof of
Proposition~\ref{newmainres})
\begin{equation} \label{newex013}
    0 \rightarrow R(-4)^{r} \xrightarrow{\ \sigma \oplus
      0 \ } R(-3)^{4r} \oplus G_2 \rightarrow  R(-2)^{6r} \oplus G_1
      \rightarrow I(C)(c) \rightarrow 0 \  \
\end{equation}
that is minimal except possibly in degree 2 and 3. 
Comparing we see that $G_j(-c)$, $j=1,2$, contains exactly the free summand $
R(-i)^{\beta_{j,i}}$ of degree $i$ for $i \notin \{2,3 \}$. We {\it claim}
that
\begin{equation} \label{newex12} h^1(\sI_C \otimes \widetilde {
  \Omega}(v)) = \beta_{1,v} \ , \quad {\rm for} \ \ v \notin
\{c+1,c+2,c+3 \}.
\end{equation}
To prove it we sheafify \eqref{newex13} and tensor with $ \widetilde {
  \Omega}(v-c)$. Since $ H_{*}^2(\widetilde \Omega)= 0$ and
$H^1(\widetilde {\Omega}^{\otimes 2}(v-c))=0$, it follows that the sequence
\begin{equation} \label{newex14} H^1 (\widetilde {G_2}(-c) \otimes
  \widetilde {\Omega}(v)) \to H^1(\widetilde {G_1}(-c) \otimes \widetilde
  {\Omega}(v)) \rightarrow H^1(\sI_C \otimes \widetilde { \Omega}(v))
  \rightarrow 0
\end{equation}
is exact. Due to \eqref{newexom} the sequence \eqref{newex14}
yields $ H^1 ( \widetilde {\Omega}^{ \beta_{2,v}}) \to
H^1 ( \widetilde {\Omega}^{ \beta_{1,v}}) \rightarrow H^1(\sI_C \otimes
\widetilde { \Omega}(v)) \rightarrow 0$. 
By the minimality of $G_2 \to G_1$, we deduce the equality in
\eqref{newex12}. 

Using the proven claim, we get that each of the $ \beta_{1,v}$ is
semi-continuous since $h^1(\sI_C \otimes \widetilde { \Omega}(v))$ is
semi-continuous. To see the corresponding statement for $ \beta_{2,v}$, we use
again linkage. Note that if we link $C$ to $D$ via a CI of type $(f,g)$, we
get $c(D)=f+g-4-c$, and
 $$ \beta_{2,v}(C)= \beta_{1,c+c(D) +4-v}(D)
 \ , \quad {\rm for} \ \ v \notin \{c+1,c+2,c+3 \}.$$
 by \eqref{resoluFreeLink}. By \eqref{newex12} we get that $ \beta_{1,c+c(D)
   +4-v}(D)$ is semi-continuous, because $v \notin \{c+1,c+2,c+3 \}$ is
 equivalent to $c+c(D) +4-v \notin \{ c(D)+1,c(D)+2,c(D)+3 \}$. Finally since
 $r= h^1(\sI_C(c))$ is clearly semi-continuous, we get the semi-continuity for
 every $ \beta_{i,v}$ of Proposition~\ref{newmainprop}, as well as for the
 5-tuple of graded Betti numbers, and we are done.
\end{proof}

\begin{proof}[Proof (of Theorem~\ref{newmain})]
  We denote the 5-tuple $( \beta_{1,c+4},
  \beta_{1,c},\beta_{2,c+4},\beta_{2,c},\beta_{3,c+4})$ of $C$ shortly by
  $(a_1,a_2,b_1,b_2,r)$ and there is a corresponding 5-tuple,
  $(a_1',a_2',b_1',b_2',r')$ for the generization $C'$. We write the 5-tuple
  of the operations (P1) as $(a_1,a_2,b_1-1,b_2,r-1)$ and (P2) as
  $(a_1,a_2-1,b_1,b_2,r-1)$. Repeated use of (Pq) for $q=1,2$ implies the
  existence of a generization of $C$ with 5-tuple
  $(a_1,a_2-i,b_1-j,b_2,r-i-j)$ provided $a_2-i \ge 0$, $b_1-j \ge 0$ and
  $r-i-j \ge 0$. Recalling $\gamma_{C}(v)= h^0(\sI_{C}(v))$ and $\sigma_C(v)=
  h^1(\sO_{C}(v))$, we {\it claim } that
\begin{equation} \label{ine}
  h^0(\sI_{C'}(c)) - a_2' \ge  h^0(\sI_{C}(c)) - a_2   \ \ {\rm and} \ \
  h^1(\sO_{C'}(c))-  b_1' \ge  h^1(\sO_{C}(c)) - b_1 \, .   
\end{equation}
We only prove the first inequality since the latter is the "dual" result which
one may get from the first inequality by linkage. To prove it we remark that
$\gamma_{C'}(v) = \gamma_{C}(v)$ for $v < c$ by the semi-continuity of
$\gamma_{C}(v)$ and $\sigma_{C}(v)$ because $\chi(\sI_{C'}(v)) =
\chi(\sI_{C}(v))$ implies 
\begin{equation} \label{ksiv}
\gamma_{C}(v)+\sigma_C(v) = 
\gamma_{C'}(v)+\sigma_{C'}(v) \ \ {\rm for} \ v \neq c \, .
\end{equation}  Using the exactness of the
minimal resolutions of $\sI_{C'}$ and $\sI_{C}$ in degree $v=c$, we get that
$h^0(\sI_{C'}(c)) - a_2' + b_2' = h^0(\sI_{C}(c)) - a_2 + b_2$ since the
exactness of these resolutions in degree $v<c$ implies $ \beta_{1,v}(C) -
\beta_{2,v}(C)= \beta_{1,v}(C') - \beta_{2,v}(C')$ for every $v<c$. Since we
know $b_2' \le b_2$ by the semi-continuity of Proposition~\ref{newmainprop}, we
get $\gamma_{C'}(c) - a_2' \ge \gamma_{C}(c) - a_2$, i.e. the claim.

Now let $\Delta\gamma(c):=\gamma_{C}(c) - \gamma_{C'}(c)$. Using
Corollary~\ref{mainres2} $\Delta\gamma(c)$ times, we get the existence of a
generization $C_{P2}$, furnished by (P2), with constant specialization
($\sigma_{C_{P2}}(v)=\sigma_C(v)$) and with the same postulation as $C'$.
Indeed this is possible because $a_2 \ge \Delta\gamma(c) \ge 0$ by \eqref{ine}
and $r \ge \Delta\gamma(c)$ by the semi-continuity of $ h^1(\sO_{C}(c))$ that
implies $$ h^0(\sI_{C'}(c))-r' = \chi(\sI_{C'}(c))-h^1(\sO_{C'}(c)) \ge
\chi(\sI_{C}(c))-h^1(\sO_{C}(c)) = h^0(\sI_{C}(c))-r . $$

Next we use Corollary~\ref{mainres12} $\Delta\sigma(c):=\sigma_{C}(c) -
\sigma_{C'}(c)$ times to get the existence of a generization $C_{P}$ of
$C_{P2}$, furnished by (P1), with constant postulation ($\gamma_{C_P}(c)=
\gamma_{C_{P2}}(v)$) and with the same specialization as $C'$. This is
possible because $b_1 \ge \Delta\sigma(c) \ge 0$ by \eqref{ine} and $r -
\Delta\gamma(c) \ge \Delta\sigma(c)$. Indeed the latter follows at once from
the equality $\chi(\sI_{C'}(c))=\chi(\sI_{C}(c))$ that implies $r - r' =
\Delta\gamma(c) + \Delta\sigma(c)$.

So far we have two curves $C_P$ and $C'$ that by \eqref{ksiv} and the
construction of $C_P$ have the same postulation and specialization
functions, whence $ h^1(\sI_{C'}(c))= h^1(\sI_{C_P}(c))$. It follows that
$\beta_{3,v}(C') = \beta_{3,v}(C_P)$ for $v=c+4$ and hence for every $v$.
Since $\gamma_{C'}=\gamma_{C_P}$, we get
\begin{equation} \label{bettieq} 
\beta_{1,v}(C')-\beta_{2,v}(C') =\beta_{1,v}(C_P)-\beta_{2,v}(C_P) \
\end{equation} 
for every $v$ by \cite{P}. We {\it claim} that $\beta_{i,j}(C') \le \beta_{i
  ,j}(C_P)$ for $i=1,2$ and $j \notin A$. First take $ j \notin \{c,c+4\} \cup
A$. Then $\beta_{i,j}(C)=\beta_{i,j}(C_P)$ by the construction of $C_P$ and
$\beta_{i,j}(C')\le \beta_{i,j}(C)$ by Proposition~\ref{newmainprop}, and we
get the claim. Next we consider $j=c$. Then
$\beta_{1,c}(C_P)=\beta_{1,c}(C)-\Delta \gamma(c)$ and
$\beta_{2,c}(C_P)=\beta_{2,c}(C)$ by the construction of $C_P$ or by
Remark~\ref{mainres4} (c). Since $\beta_{1,c}(C') =a_2' \le
\beta_{1,c}(C)-\Delta \gamma(c)$ by \eqref{ine} and $\beta_{2,c}(C')\le
\beta_{2,c}(C)$ by Proposition~\ref{newmainprop}, we get the claim for $j=c$.
Finally for $j=c+4$ we use the other inequality of \eqref{ine},
Remark~\ref{mainres4} (c) and Proposition~\ref{newmainprop} to see
$\beta_{i,c+4}(C') \le \beta_{i ,c+4}(C_P)$ for $i=1,2$, and the claim is
proved.

If the inequality of the claim is strict for some $j \notin A$ and some $i \in
\{1,2\}$, then both $\beta_{1,j}(C_P)$ and $\beta_{2,j}(C_P)$ are non-zero by
their semi-continuity and \eqref{bettieq}, and $R(-j)$ is a common free
summand of $F_2$ and $F_1$ in the minimal resolution of $I(C_P)$. Hence
Theorem~\ref{mainres} applies to $R(-j)$ as well as to any other ghost term
between $F_2$ and $F_1$ in the minimal resolution of $I(C_P)$ for which the
inequality of the claim is strict. It follows that there is a generization $D$
of $C_P$ generated by $(Q_{\mathbb N -A})$ such that
$\beta_{i,j}(C')=\beta_{i,j}(D)$ for $i=1,2$ and $ j \notin A$.

Finally if $j \in A$, we still have \eqref{bettieq}. It follows that we either
have $\beta_{i,j}(C') = \beta_{i ,j}(C_P)$ for $i=1,2$, {\it or}
$\beta_{i,j}(C') < \beta_{i ,j}(C_P)$ for $i=1,2$, whose corresponding ghost
term in the minimal resolution of $I(C_P)$ is removed by a generization of
$D$, {\it or} $\beta_{i,j}(C') > \beta_{i ,j}(C_P)$ for $i=1,2$, leading to a
ghost term in the minimal resolution of $I(C')$ that is removed by a
generization given by (Qj) of $C'$. Removing all such ghost terms
corresponding to strict inequalities of the graded Betti numbers above, we get
the existence of generizations $C''_1$ of $C'$, and $D'$ of $D$, generated by
${\rm (Q_A)}$ such that $\beta_{i,j}(C''_1) = \beta_{i ,j}(D')$ for every $i
\in \{1,2\}$ and $j \in \mathbb N$. Since (Qj) do not change $\beta_{3,c+4}$,
then the generizations $C''_1$ and $D'$ of $C$ belong to the same Betti
stratum. Using 
Corollary~\ref{gencorT} and Definition~\ref{genT}, we get the theorem.
\end{proof}

\begin{remark} \label{remmainthm4} Let $C'$ be a generic curve of the Betti
  stratum 
  of a diameter-1  curve $C$. Then it follows from the last paragraph
  of the proof that if $\beta_{i,j}(C') \le \beta_{i ,j}'$ for $i=1,2$ and $j
  \in A$ where $\beta_{i ,j}'$ is given as in Remark~\ref{mainres4} (c), we
  may take $C''= C'$ in Theorem~\ref{newmain}, i.e. $C'$ is a generization of
  $C$ in $\HH(d,g)$ generated by ${\rm (PQ)}$.
\end{remark}



A main application of Theorem~\ref{newmain} is the
first statement (``the hard part'') of the following: 

\begin{corollary} \label{cormaxcomp} Let $C'$ be a generic curve of an
  irreducible component of $\HH(d,g)$ containing a diameter-1 curve $C$, and
  let $c=c(C)$ and $\beta'_{i,j}=\beta_{i,j}(C')$. Then $C'$ is a generization
  of $C$ in $\HH(d,g)$ generated by ${\rm (PQ)}$. Moreover \ $\beta'_{2,c+4}
  \cdot \beta'_{3,c+4}=  \beta'_{1,c} \cdot \beta'_{3,c+4}= 0\,, $ 
  \[ \ \beta'_{1,c+3} \cdot (\beta'_{2,c+3}-4 \beta'_{3,c+4})=0 \ , \ \
 \beta'_{1,i} \cdot \beta'_{2,i} = 0  \ \ {\rm for \ any \ } i \ne c+3 \,
, 
 \]
 and its 5-tuple is either $(\beta'_{1,c+4},0,0,\beta'_{2,c},\beta'_{3,c+4})$
 with $\beta'_{3,c+4} \ne 0$ or
 $(\beta'_{1,c+4},\beta'_{1,c},\beta'_{2,c+4},\beta'_{2,c},0)$. 
 %
\end{corollary}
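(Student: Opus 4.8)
The plan is to split \verb|Corollary~\ref{cormaxcomp}| into two parts and handle them separately. The first statement---that the generic curve $C'$ of any irreducible component containing $(C)$ is a generization of $C$ generated by $\mathrm{(PQ)}$---is ``the hard part'' and should follow directly from \verb|Theorem~\ref{newmain}|. Indeed, $C'$ is in particular a generization of $C$ in $\HH(d,g)$, so by \verb|Theorem~\ref{newmain}| there is a generization $C''$ of $C'$, generated by $\mathrm{(Q_A)}$ with $A=\{c+1,c+2,c+3\}$, which is a generization of $C$ generated by $\mathrm{(PQ)}$. The point to argue is that $C''$ may be taken equal to $C'$ itself: since $C'$ is already \emph{generic} in its component, any generization of $C'$ within $\HH(d,g)$ must lie in the same irreducible component and hence in the same Betti stratum (as the generic Betti numbers are minimal, by \verb|Corollary~\ref{semic}| applied with constant postulation, or by \verb|Remark~\ref{remmainthm4}|). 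Thus the $\mathrm{(Q_A)}$-operations producing $C''$ from $C'$ cannot strictly decrease any Betti number, so $C''$ is a trivial generization of $C'$, and $C'$ itself is generated by $\mathrm{(PQ)}$.

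Once the first statement is established, the constraints on the Betti numbers should be read off from the structure of $\mathrm{(PQ)}$-generizations described in \verb|Remark~\ref{mainres4}| (c), together with the genericity of $C'$. The idea is that applying $\mathrm{(P1)}$ $p_1$ times and $\mathrm{(P2)}$ $p_2$ times yields the explicit 5-tuple $(\beta_{1,c+4},\beta_{1,c}-p_2,\beta_{2,c+4}-p_1,\beta_{2,c},\beta_{3,c+4}-p_1-p_2)$, so for the \emph{generic} curve one maximizes $p_1,p_2$: genericity forces us to have pushed each operation as far as possible, giving the product relations. Concretely, $\beta'_{2,c+4}\cdot\beta'_{3,c+4}=0$ because if both $\beta'_{3,c+4}=r'$ and $\beta'_{2,c+4}=b_1'$ were nonzero, then $\mathrm{(P1)}$ could be applied once more (it requires $b_1\ne 0$, see \verb|Corollary~\ref{mainres12}|), producing a strictly more general curve and contradicting genericity; symmetrically $\beta'_{1,c}\cdot\beta'_{3,c+4}=0$ follows from the applicability of $\mathrm{(P2)}$ when $a_2\ne 0$ and $\beta_{3,t}(M')=0$ (\verb|Corollary~\ref{mainres2}|, with $M'=0$ here). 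The two possible 5-tuple shapes then correspond to whether $r'=\beta'_{3,c+4}$ vanishes or not: if $r'\ne 0$, both products force $\beta'_{1,c}=\beta'_{2,c+4}=0$, giving $(\beta'_{1,c+4},0,0,\beta'_{2,c},\beta'_{3,c+4})$; if $r'=0$ the curve is ACM and the 5-tuple is $(\beta'_{1,c+4},\beta'_{1,c},\beta'_{2,c+4},\beta'_{2,c},0)$.

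The relations $\beta'_{1,i}\cdot\beta'_{2,i}=0$ for $i\ne c+3$ and $\beta'_{1,c+3}\cdot(\beta'_{2,c+3}-4\beta'_{3,c+4})=0$ should come from \verb|Corollary~\ref{mainrecor}| applied to $C'$. Since $C'$ is generic, the final statement of that corollary gives $\beta'_{1,i}\cdot(\beta'_{2,i}-\beta'_{4,i}(M'))=0$ for every $i$, where $\beta'_{4,i}(M')$ are the Betti numbers of $L_3$ in the minimal resolution of $M(C')$. For a diameter-1 curve $M(C')\cong M_{[c]}^{r'}$, whose Koszul-type resolution \eqref{Koszul} has $L_3=R(-c-3)^{4r'}$, so $\beta'_{4,i}(M')=4r'=4\beta'_{3,c+4}$ when $i=c+3$ and $\beta'_{4,i}(M')=0$ otherwise. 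Substituting these values into \verb|Corollary~\ref{mainrecor}| yields exactly the asserted product relations. The main obstacle I anticipate is the first paragraph: making fully rigorous that a generization $C''$ of a \emph{generic} $C'$ obtained by the $\mathrm{(Q_A)}$-operations of \verb|Theorem~\ref{newmain}| is forced to be trivial, so that $C'$ is literally generated by $\mathrm{(PQ)}$ and not merely dominated up to $\mathrm{(Q_A)}$; this requires combining the semi-continuity of the Betti numbers in $A$ under genericity (via \verb|Remark~\ref{remmainthm4}|) with the fact that $\mathrm{(Qj)}$ only removes ghost terms, hence cannot strictly lower the already-minimal generic Betti numbers.
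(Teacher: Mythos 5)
Your proposal is correct and follows essentially the same route as the paper's own proof: the first claim via Theorem~\ref{newmain} (with the genericity reduction that the paper delegates to Remark~\ref{remmainthm4}), the products involving $\beta'_{3,c+4}$ by applying {\rm (P1)}/{\rm (P2)} to the generic curve and contradicting the semi-continuity of Proposition~\ref{newmainprop}, and the remaining products via the final statement of Corollary~\ref{mainrecor} with $\beta_{4,i}(M)$ equal to $4\beta'_{3,c+4}$ in degree $i=c+3$ and zero otherwise. The only slip is the parenthetical ``with $M'=0$ here'' when invoking Corollary~\ref{mainres2}: for $r'\ge 2$ one has $M'\cong M_{[c]}^{r'-1}\neq 0$, but the needed hypothesis $\beta_{3,c}(M')=0$ still holds since $P_2$ is concentrated in degree $c+2$, so the argument is unaffected.
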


\begin{proof} The generic curve $C'$ is a generization of $C$ in $\HH(d,g)$,
  whence is generated by ${\rm (PQ)}$ by Theorem~\ref{newmain} or
  Remark~\ref{remmainthm4}. Moreover the generic curve $C'$ must satisfy $ \
  r' \cdot \beta'_{1,c}= 0 \ {\rm and } \ r' \cdot \beta'_{2,c+4} = 0$ where
  $r'=\beta'_{3,c+4}$, because otherwise there exists by
  Corollaries~\ref{mainres12} and \ref{mainres2} a generization $C''$ of $C'$
  such that $\beta_{3,c+4}(C'') = \beta'_{3,c+4}-1$ contradicting the
  semi-continuity of $\beta'_{3,c+4}$ (Proposition~\ref{newmainprop}).
  Similarly we get the conclusion for $\beta'_{1,i} \cdot \beta'_{2,i}$ 
  by Corollary~\ref{mainrecor}.
  \end{proof}

  So generic curves may have ghost terms in degree $c+3$ (only). To find an
  example, recall that if we link $C$ to a curve $D$ using a general CI of
  type $(f,g)$ such that $H^1(\sI_C(v)) = 0 \ {\rm for} \ v= f , g , f - 4 \
  {\rm and} \ g - 4,$ then $C$ is generic if and only if
  $D$ is generic (\cite{K3}, Prop.\;3.8).
  \begin{example} \label{ghex}
Using this we 
  take two general skew lines as in Example~\ref{ex1} and we link twice, first
  via a CI of type $(5,2)$, then via a CI of type $(5,4)$. This gives us a
  curve $X$, generic in $\HH(12,18)$, with minimal resolution and a ghost term
  $R(-5)$ in degree $c+3$:
  \begin{equation*} 0 \rightarrow R(-6) \rightarrow R(-7) \oplus R(-5)^{4}
    \rightarrow R(-5) \oplus R(-4)^4 \rightarrow I(X) \rightarrow 0 \, .
    \end{equation*} 
  \end{example}
  Since our concern is about irreducible components of $\HH(d,g)$ containing
  $(C)$, it is only the graded Betti numbers in the 5-tuple and e.g.\;ghost
  terms there that play a role, as we now shall see.

  \begin{definition} Let $C$ be a diameter-1 curve and denote its 5-tuple by
    $\underline \beta(C)_5$. We say a 5-tuple $\underline \beta'_5$
    specializes to $\underline \beta(C)_5$, and we write $\underline \beta'_5
    \leadsto \underline \beta(C)_5$ if we obtain $\underline \beta'_5$ from
    $\underline \beta(C)_5$ by repeatedly using some of the operations {\rm
      (Pi)} for $i =1,2$ and {\rm (Qj)} for $j=c, c+4$ in some order. A
    5-tuple $\underline \beta'_5$ is called minimal if it has the property
    that it does not allow further reductions by using the mentioned
    operations, i.e. $\underline \beta'_5$ is given as in
    Corollary~\ref{cormaxcomp}.
\end{definition}


\begin{theorem} \label{Vbetta} Let $C \subseteq \proj{3}$ be a Buchsbaum curve
  of diameter one. Then there is a one-to-one correspondence between the set
  of minimal 5-tuples that specialize to $ \underline \beta(C)_5$ via the
  operations ${\rm (PQ_J)}$ for $J=\{c, c+4\}$, and the set of irreducible
  (non-embedded) components of $\HH(d,g)$ containing $(C)$, i.e.
  \[ \{ {\rm minimal} \ \underline \beta'_5 \arrowvert \, \underline \beta'_5
  \leadsto \underline \beta(C)_5 \} \stackrel{1-1}{\longleftrightarrow}
  \{ {\rm irreducible \ components \ } V \subset \HH(d,g) \arrowvert \, V \ni
  (C) \} \, .
\]
Here $V$ maps to the 5-tuple of its generic curve and all components
$V$ are generically smooth.
\end{theorem}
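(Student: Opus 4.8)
The plan is to establish the bijection by constructing maps in both directions and checking they are mutually inverse. First I would define the map from components to minimal 5-tuples: given an irreducible component $V\ni(C)$ with generic curve $C'$, Corollary~\ref{cormaxcomp} tells us that $C'$ is a generization of $C$ generated by ${\rm (PQ)}$ and that its 5-tuple $\underline\beta(C')_5$ is minimal. So the assignment $V\mapsto\underline\beta(C')_5$ is well-defined, and the claim ``$V$ maps to the 5-tuple of its generic curve'' is exactly this. The main content is to promote this to a bijection onto the set of minimal 5-tuples reachable via ${\rm (PQ_J)}$ with $J=\{c,c+4\}$.

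For \emph{surjectivity}, start with a minimal $\underline\beta'_5\leadsto\underline\beta(C)_5$. By definition it is produced from $\underline\beta(C)_5$ by a sequence of operations {\rm (Pi)} and {\rm (Qj)}, $j\in\{c,c+4\}$, and each such operation is realized by an actual generization of $C$ in $\HH(d,g)$ (Corollaries~\ref{mainres12}, \ref{mainres2} and \ref{mainrecor}). Composing these, I obtain a curve $C'$ with 5-tuple $\underline\beta'_5$; since $C'$ is a generization it lies in some irreducible component $V$. The point is that because $\underline\beta'_5$ is \emph{minimal}, the generic curve of $V$ cannot further reduce the 5-tuple, so by semi-continuity (Proposition~\ref{newmainprop}) the generic curve of $V$ has exactly the 5-tuple $\underline\beta'_5$; thus $V$ maps back to $\underline\beta'_5$.

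For \emph{injectivity}, suppose two components $V_1,V_2$ map to the same minimal 5-tuple. Let $C_1',C_2'$ be their generic curves. By Theorem~\ref{newmain} each $C_i'$ is a generization of $C$ generated by ${\rm (PQ)}$, and having the same 5-tuple I want to conclude they lie in the same irreducible set. The key is that once the 5-tuple is fixed and minimal, the operations ${\rm (Q_A)}$ of Theorem~\ref{newmain} only remove ghost terms in degrees $A=\{c+1,c+2,c+3\}$ and do not affect $\beta_{3,c+4}=r$; thus $C_1'$ and $C_2'$ admit common generizations lying in the same Betti stratum, and by Corollary~\ref{gencorT} and Definition~\ref{genT} they admit a trivial generization, forcing $V_1=V_2$ since each is the unique component through its own generic point.

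\textbf{The hard part} will be the injectivity argument: I must rule out the possibility that a single minimal 5-tuple is realized by genuinely different components whose generic curves differ in the ghost-term degrees $c+1,c+2,c+3$ invisible to the 5-tuple. I expect to resolve this exactly as in the proof of Theorem~\ref{newmain}: trivial generizations (via the linear interpolation $\varphi_t$ of \eqref{generiT} in $E$-resolutions) connect any two curves in a Betti stratum through an irreducible $\mathbb{A}^1_k$, and the irreducibility of the Betti stratum $\HH(\underline\beta(C))$ (Proposition~\ref{BettiStrata}) then glues the two candidate components into one. The ``generically smooth'' claim should follow since the generic curve of each $V$, having a minimal 5-tuple with two consecutive zeros, is unobstructed by Remark~\ref{rem5tupleobstr}.
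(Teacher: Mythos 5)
Your proposal follows the paper's skeleton in the easy parts: the map $V\mapsto\underline\beta(\text{generic curve})_5$ is well defined by Corollary~\ref{cormaxcomp}, surjectivity is obtained by realizing a minimal 5-tuple through actual generizations and pinning down the generic 5-tuple via minimality and semi-continuity (Proposition~\ref{newmainprop}), and generic smoothness comes from Remark~\ref{rem5tupleobstr}. The paper, however, runs the correspondence in the opposite direction: it realizes each minimal 5-tuple by a curve $\tilde C$ which is \emph{unobstructed} by Corollary~\ref{introth3} (a minimal 5-tuple has two consecutive zeros), so that $\tilde C$ lies on a unique component, and the "injectivity" the paper verifies is then the easy implication (same component $\Rightarrow$ same 5-tuple). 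The statement you correctly isolate as the hard part --- two distinct components $V_1\neq V_2$ through $(C)$ cannot have generic curves with the same minimal 5-tuple --- is precisely what the paper's construction leans on, and it is here that your argument has a genuine gap.

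The gap is the step "thus $C_1'$ and $C_2'$ admit common generizations lying in the same Betti stratum." A Betti stratum requires \emph{all} graded Betti numbers (and $\gamma,\rho$) to agree, while the 5-tuple is blind to the degrees $c+1,c+2,c+3$; in degree $c+3$ the genericity condition of Corollary~\ref{cormaxcomp} is $\beta_{1,c+3}'(\beta_{2,c+3}'-4\beta_{3,c+4}')=0$, which has two branches, and Example~\ref{ghex} shows generic curves really can carry a degree-$(c+3)$ ghost term. Moreover, since $C_1',C_2'$ are generic in their components, no operation (Qj) applies to them (Corollary~\ref{mainrecor} needs $\beta_{1,j}(\beta_{2,j}-\beta_{4,j}(M))\neq 0$, which genericity forbids), and every generization of $C_i'$ is again generic in $V_i$ with the same Betti numbers; so a discrepancy in degree $c+3$ could never be removed by further ${\rm (Q_A)}$-generizations, unlike in the proof of Theorem~\ref{newmain} where both curves are still free to move. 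Your step therefore assumes the equality of full Betti numbers, which is essentially equivalent to $V_1=V_2$ --- the argument is circular. (The appeal to irreducibility of $\HH(\underline\beta(C))$, the stratum of $C$ itself, proves nothing: $(C)$ lies on all the components simultaneously.) The repair, entirely within the paper's toolkit, avoids Betti numbers: from the two 5-tuples one reads off the operation counts $p_1=(b_1-b_1')-(a_1-a_1')$ and $p_2=(a_2-a_2')-(b_2-b_2')$, and Corollaries~\ref{mainres12} and~\ref{mainres2} (together with Theorem~\ref{mainres}, which preserves all cohomology) show that $C_1'$ and $C_2'$ have the \emph{same postulation and deficiency}; Bolondi's theorem (quoted before Proposition~\ref{BettiStrata}) says $\HH_{\gamma,\rho}$ is irreducible for curves of diameter at most one, so its closure is an irreducible set containing both generic points $(C_1'),(C_2')$, hence contained in a single component, forcing $V_1=V_2$.
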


\begin{proof} Let $\underline \beta'_5$ be a minimal 5-tuple that specializes
  to $ \underline \beta(C)_5$. We want to define the corresponding irreducible
  component $V(\underline \beta'_5)$ whose generic curve has $\underline
  \beta'_5$ as its 5-tuple. Since the operations (Pi) for $i =1,2$ and (Qj)
  for $j=c, c+4$ on 5-tuples correspond to the existence of generizations,
  there is a generization $\tilde C$ of $C$ in $\HH(d,g)$ such that
  $\underline \beta(\tilde C)_5= \underline \beta'_5$. Then $\tilde C$ is
  unobstructed by Corollary~\ref{introth3}. Let $V(\tilde C)$ be the unique
  irreducible component of $\HH(d,g)$ containing $(\tilde C)$ and let $C'$ be
  the generic curve of $V(\tilde C)$. Then $\underline \beta(C')_5$ is minimal
  by Corollary~\ref{cormaxcomp} and we have $\underline \beta(C')_5 \le
  \underline{\beta}(\tilde C)_5$ by the semi-continuity of 5-tuples, whence
  equality by the minimality of $\underline{\beta}(\tilde C)_5$. Put
  $V(\underline \beta'_5):=V(\tilde C)$.

  To see that the application $\underline \beta'_5 \leadsto V(\underline
  \beta'_5)$ is injective, we suppose $V(\underline{\beta'_1}_5)=
  V(\underline{\beta'_2}_5)$. Then we can assume that their generic curves
  $C_1'$ and $C_2'$ coincide and we conclude the injectivity
  by $$\underline{\beta'_1}_5 = \underline{\beta}(C_1')_5 =
  \underline{\beta}(C_2')_5 = \underline{\beta'_2}_5 \, .$$

  The surjectivity of the application follows from Corollary~\ref{cormaxcomp}
  which implies that a generic curve $C'$ is obtained by taking generizations
  in $\HH(d,g)$ (starting with $C$) using (Pi) and (Qj) in some order. The
  corresponding operations (Pi) and (Qj) on the 5-tuples imply that
  $\underline \beta(C')_5$, which is minimal, specializes to $\underline
  \beta(C)_5$ using only (Pi) and (Qj) for $j=c, c+4$.
 \end{proof}

 \begin{remark} \label{finrem} Theorem~\ref{Vbetta} significantly generalizes
   Prop.\;4.6 of \cite{krao}. It also allows us to interpret geometrically the
   obstructedness result of \cite{krao}, Thm.\,1.3, see
   Corollary~\ref{introth3}. Indeed given $( \beta_{1,c+4},
   \beta_{1,c},\beta_{2,c+4},\beta_{2,c},\beta_{3,c+4})$ with $\beta_{3,c+4}
   \ne 0$, then the obstructedness condition {$$ \beta_{1,c} \cdot
     \beta_{2,c+4} \neq 0 \ \ \ {\rm {\bf or}} \ \ \ \beta_{1,c+4} \cdot
     \beta_{2,c+4} \neq 0 \ \ \ {\rm {\bf or}} \ \ \ \beta_{1,c} \cdot
     \beta_{2,c}\neq 0 $$}is equivalent to the following statement: there
   exist generizations given by {\rm (P1)} and {\rm (P2)}, {\bf or} {\rm (P1)}
   and {\rm (Q(c$\,+\,4$))}, {\bf or} {\rm (P2)} and {\rm (Qc)} respectively,
   where each of the three ``and''-expressions correspond to two different
   (``directions for the'') generizations, removing at least one ghost term in
   a minimal resolution of $I(C)$. Moreover each of the three expressions may
   correspond to two different irreducible components of $\HH(d,g)$, but not
   necessarily, as we may see from:
\end{remark}

\begin{example} \label{ex33} {\rm (a)} The obstructed curve $C$ of
  Example~\ref{ex3} (a) has 5-tuple $(0,1,1,0,2)$. It admits two generizations
  to two curves with 5-tuples $(0,1,0,0,1) \ {\rm and } \ (0,0,1,0,1).$ These
  5-tuples are not minimal. Indeed both curves admit generizations to curves
  with the same 5-tuple $(0,0,0,0,0)$. By Theorem~\ref{Vbetta}\, $C$ belongs
  to a unique
  irreducible components of $\HH(32,109)_S$!  \\[2mm]
  {\rm (b)} 
  The 5-tuple of the obstructed curve $C$ in Example~\ref{ex3} (b) is
  $(1,0,1,0,1)$, i.e. the curve $C$ admits two generizations to two curves
  with minimal 5-tuples $(1,0,0,0,0) \ {\rm and } \ (0,0,0,0,1)$, where one of
  the generizations is ACM and the other is Buchsbaum of diameter one. By
  Theorem~\ref{Vbetta}\, $C$ belongs to exactly two irreducible components of
  $\HH(33,117)_S$, cf. \cite{BKM}. Note that both generizations correspond to
  the removal of ghost terms, cf. \cite{W2}, Ex.\;4.2. Hence we can not
  separate the two
  components by the usual semi-continuity of $h^i(\sI_{C}(v))$!  \\[2mm]
  {\rm (c)} The 5-tuple of the curve $X$ of Example~\ref{ex1} is
  $(0,1,1,0,1)$, having two generizations with 5-tuples $(0,1,0,0,0) \ {\rm
    and } \ (0,0,1,0,0).$ These 5-tuples are minimal and the corresponding
  curves are ACM. By Theorem~\ref{Vbetta} there are precisely two irreducible
  components $V_1$, $V_2$ of $\HH(18,39)_S$ such that $(X) \in V_1 \cup V_2$,
  cf. Example~\ref{ex2}. Note that we in this case may separate the two
  components by the semi-continuity of $h^i(\sI_{Z}(v))$ because 
  $(h^0(\sI_{Z}(4)), h^1(\sI_{Z}(4)), h^1(\sO_{Z}(4)))$ is equal to $(1,1,1)$
  for $Z=X$, while it is $(1,0,0)$ and $(0,0,1)$ for the two generizations.
\end{example}

Our next proposition and remark, which was communicated to us by Johannes
Kleppe together with a full proof and Example~\ref{johan}, determine
explicitly how many irreducible components of $\HH(d,g)$ that we have in the
correspondence given in Theorem~\ref{Vbetta}. Below $(a_1,a_2,b_1,b_2,r) =
(\beta_{1,c+4},\beta_{1,c}, \beta_{2,c+4}, \beta_{2,c}, \beta_{3,c+4})$ and we
let $\binom mn = 0$ if $m<n$.

\begin{proposition} \label{numberBCM} Let $(a_1,a_2,b_1,b_2,r)$ be the 5-tuple
  of a Buchsbaum curve of diameter one, and let $\hat a_2 = \max
  \{0,a_2-b_2\}$ and $\hat b_1 = \max \{0,b_1-a_1\}$. The number of minimal
  5-tuples that specialize to $(a_1,a_2,b_1,b_2,r)$ is
  \begin{equation*}
    N_B + N_{CM}.
  \end{equation*} \\[-7mm]
  Here,
  \begin{equation}
    \label{eq:NB}
    N_B = \binom {r-\hat b_1-\hat a_2+1} 2 - \binom {r-b_1-\hat a_2} 2 -
    \binom {r-\hat b_1-a_2} 2 + \binom {r-b_1-a_2-1} 2
  \end{equation}
  is the number of minimal 5-tuples that correspond to generic diameter-1
  curves, and
  \begin{equation}
    \label{eq:NCM}
    N_{CM} =
    \begin{cases}
      \min\{b_1,a_2,r\}+1, & \text{if } r \le \max \{b_1,a_2\} \\
      b_1+a_2-r+1, & \text{if }  \max \{b_1,a_2\} \le r \le b_1+a_2 \\
      0, & \text{if } r > b_1+a_2 \\
    \end{cases}
  \end{equation}
  is the number of minimal 5-tuples that correspond to generic $ACM$ curves.
\end{proposition}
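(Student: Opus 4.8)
The plan is to turn the count of minimal $5$-tuples into a lattice-point count governed by the four elementary operations and then to evaluate it by inclusion-exclusion. First I would record the effect of each operation on a $5$-tuple $(a_1,a_2,b_1,b_2,r)$: by Corollaries~\ref{mainres12}, \ref{mainres2} and \ref{mainrecor}, the operation (P1) sends it to $(a_1,a_2,b_1-1,b_2,r-1)$, (P2) to $(a_1,a_2-1,b_1,b_2,r-1)$, (Qc) to $(a_1,a_2-1,b_1,b_2-1,r)$, and (Q($c+4$)) to $(a_1-1,a_2,b_1-1,b_2,r)$. A generization generated by ${\rm (PQ_J)}$ with $J=\{c,c+4\}$ thus applies these respectively $p_1,p_2,q_c,q_4$ times, in some order, producing the tuple with $a_1'=a_1-q_4$, $a_2'=a_2-p_2-q_c$, $b_1'=b_1-p_1-q_4$, $b_2'=b_2-q_c$ and $r'=r-p_1-p_2$, all required to be nonnegative. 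Since (by the remark following Definition~\ref{defrep}) the resulting Betti stratum is, up to a trivial generization, independent of the order, the reachable minimal tuples are exactly those arising from an admissible quadruple $(p_1,p_2,q_c,q_4)$. By Corollary~\ref{cormaxcomp} these split into two \emph{disjoint} families, according to whether $r'>0$ or $r'=0$: the Buchsbaum tuples $(a_1',0,0,b_2',r')$ with $r'>0$, counted by $N_B$, and the ACM tuples $(a_1',a_2',b_1',b_2',0)$ with $a_1'b_1'=0$ and $a_2'b_2'=0$, counted by $N_{CM}$.

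For $N_B$ the conditions $a_2'=b_1'=0$ force $q_c=a_2-p_2$ and $q_4=b_1-p_1$, so the quadruple is determined by $(p_1,p_2)$, and nonnegativity of $q_c,q_4,a_1',b_2'$ together with $r'\ge 1$ becomes
\[
\hat b_1 \le p_1 \le b_1, \qquad \hat a_2 \le p_2 \le a_2, \qquad p_1+p_2 \le r-1 .
\]
The resulting tuple $(a_1-b_1+p_1,\,0,\,0,\,b_2-a_2+p_2,\,r-p_1-p_2)$ depends injectively on $(p_1,p_2)$ through $a_1'$ and $b_2'$, so $N_B$ is exactly the number of such $(p_1,p_2)$. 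Shifting by $\hat b_1,\hat a_2$ and using $b_1-\hat b_1=\min\{a_1,b_1\}$ and $a_2-\hat a_2=\min\{a_2,b_2\}$, this is a count of lattice points below a line inside a box, and a routine inclusion-exclusion (subtracting the two over-shoots of the box and adding back their overlap) yields precisely \eqref{eq:NB}.

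For $N_{CM}$ I would exploit that $r'=0$ forces $p_1+p_2=r$, and compute the differences $a_1'-b_1'=a_1-b_1+p_1$ and $a_2'-b_2'=a_2-b_2-p_2=a_2-b_2-r+p_1$. Because $a_1'b_1'=0$ and $a_2'b_2'=0$, each difference $D$ determines its pair uniquely in the normal form $(\max\{D,0\},\max\{-D,0\})$; hence the whole minimal ACM tuple is determined by the single parameter $p_1$, and distinct $p_1$ give distinct tuples (via $a_1'-b_1'$). It then remains to find the admissible range of $p_1$: the existence of $q_4\ge 0$ with $a_1-q_4\ge 0$, $b_1-p_1-q_4\ge 0$ and one of $a_1',b_1'$ zero reduces to $p_1\le b_1$, while the symmetric analysis on the $(a_2,b_2)$ side gives $p_2=r-p_1\le a_2$, i.e. $p_1\ge r-a_2$; together with $0\le p_1\le r$ this yields $\max\{0,r-a_2\}\le p_1\le\min\{r,b_1\}$. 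Counting integers in this interval (and reading off the empty case) gives $\min\{r,b_1\}-\max\{0,r-a_2\}+1$, which I would then check agrees term-by-term with the three cases of \eqref{eq:NCM}.

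The main obstacle I anticipate is not the two arithmetic counts but the bijectivity and feasibility claims underpinning them: that every reachable minimal tuple arises from an admissible count, that different counts give genuinely different tuples (handled above by injectivity of $(p_1,p_2)\mapsto(a_1',b_2')$ and of $p_1\mapsto a_1'-b_1'$), and that the required nonnegativity can actually be realized by some legal \emph{ordering} of the operations rather than merely by the net bookkeeping. I would dispose of the last point by performing all (P1),(P2) steps first and all (Qc),(Q($c+4$)) steps afterward: since the (Q)-steps act on the two disjoint coordinate pairs $\{a_1,b_1\}$ and $\{a_2,b_2\}$, one checks directly that every intermediate $5$-tuple stays nonnegative (and each operation's precondition holds) exactly when the final tuple is admissible, so net admissibility is equivalent to step-by-step admissibility.
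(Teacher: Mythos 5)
Your proof is correct and follows essentially the same route as the paper's: both reduce the problem to net counts of the four elementary operations subject to componentwise nonnegativity, split into the cases $r'>0$ (Buchsbaum, counted by a box-under-a-line lattice count giving $N_B$) and $r'=0$ (ACM, where the minimal tuple is determined by one parameter, giving $N_{CM}$). The only difference is that you make explicit two points the paper leaves implicit -- that net admissibility can be realized by a legal ordering (P-steps before Q-steps) and that distinct parameter choices yield distinct minimal tuples -- which is a welcome tightening rather than a new method.
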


\begin{proof}
  The four basic reductions of a 5-tuple $(a_1,a_2,b_1,b_2,r)$ are given by
  the vectors $\underline \alpha_1 = (1,0,1,0,0)$, $\underline \alpha_2 =
  (0,0,1,0,1)$, $\underline \alpha_3 = 
  (0,1,0,0,1)$ and $\underline \alpha_4 = (0,1,0,1,0)$. Any reduction of the
  5-tuple can 
  be written as
  \begin{equation*}
    (a_1,a_2,b_1,b_2,r) - \sum_{i=1}^4 k_i \underline \alpha_i = (a_1-k_1,
    a_2-k_3-k_4, 
    b_1-k_1-k_2, b_2-k_4, r-k_2-k_3)
  \end{equation*}
  with each $k_i \ge 0$. These numbers cannot be negative, giving us the
  following five inequalities:
  \begin{equation*}
    k_1 \le a_1 \qquad k_1+k_2 \le b_1 \qquad k_2+k_3 \le r \qquad k_3+k_4 \le
    a_2 \qquad k_4 \le b_2 \,.
  \end{equation*}
  Clearly, we have arrived at a minimal 5-tuple if and only if no $k_i$ can be
  increased, implying that among each pair of neighbouring inequalities in the
  above, one must be an equality. To count the number of minimal 5-tuples, we
  will divide into two cases, depending on whether $r$ is reduced to zero or
  not.


  \emph{Case} 1. If $r$ is non-zero in the minimal 5-tuple, then the
  reductions of $b_1$ and $a_2$ must both be zero. Hence the minimal 5-tuple
  is of the form $(*,0,0,*,+)$, giving the following:
  \begin{equation*}
    k_1 \le a_1 \qquad k_1+k_2 = b_1 \qquad k_2+k_3 < r \qquad k_3+k_4 = a_2
    \qquad k_4 \le b_2 \,.
  \end{equation*}
  This requires that $k_1 \le \min\{a_1,b_1\}$, and therefore $k_2 \ge b_1 -
  \min\{a_1,b_1\} = \max\{0,b_1-a_1\} = \hat b_1$. Conversely, $\hat b_1 \le k_2
  \le b_1$ implies $0 \le k_1 \le \min\{a_1,b_1\}$. Hence the minimal 5-tuples
  in \emph{Case} 1 are in one-to-one correspondence with all pairs $(k_2,
  k_3)$ within the square $\hat b_1 \le k_2 \le b_1$ and $\hat a_2 \le k_3 \le
  a_2$ that satisfy $k_2 + k_3 < r$. The number of such pairs can be expressed
  using triangular numbers as
  \begin{equation*}
    N_B = \binom {r-\hat b_1-\hat a_2+1} 2 - \binom {r-b_1-\hat a_2} 2 -
    \binom {r-\hat b_1-a_2} 2 + \binom {r-b_1-a_2-1} 2 \,.
  \end{equation*}
  Note that $N_B \le (\min \{a_1,b_1\} +1)(\min \{a_2,b_2\} +1)$, with
  equality if and only if $r > b_1+a_2$.

  \emph{Case} 2. If $r$ is reduced to zero, we get a 5-tuple of the
  form $(*,*,*,*,0)$. This form is a specialization of a unique minimal
  5-tuple, found by reducing the pairs $(a_1,b_1)$ and $(a_2,b_2)$, i.e.
  increasing $k_1$ and $k_4$, until one of the integers in each pair reach
  zero. Therefore, we only have to count in how many ways $r$ can be reduced
  to zero, and the constraints for these minimal 5-tuples are as follows:
  \begin{equation*}
    k_1 = \min \{a_1, b_1-k_2\} \qquad k_2 \le b_1 \qquad k_2+k_3 = r \qquad
    k_3 \le 
    a_2 \qquad k_4 = \min \{b_2, a_2-k_3\} \,.
  \end{equation*}
  In other words, the minimal 5-tuples in \emph{Case} 2 correspond to those
  pairs $(k_2, k_3)$ on the line $k_2+k_3 = r$ that satisfy $k_2 \le b_1$ and
  $k_3 \le a_2$, implying formula \eqref{eq:NCM}.
\end{proof}

\begin{example} \label{johan}
  Let us count the number of minimal 5-tuples that specialize to $(3,7,5,5,6)$
  (disregarding if this is a 5-tuple of a diameter-1 curve that exists). In
  this case $\hat b_1 = b_1-a_1 = 2$ and $\hat a_2 = a_2-b_2 = 2$. The minimal
  5-tuples are easily visualized in the $k_2k_3$-plane:

  \newlength{\scalefactor}%
  \setlength{\scalefactor}{1.6pt}%
  \newlength{\miniwidth}%
  \setlength{\miniwidth}{\linewidth}%
  \addtolength{\miniwidth}{-85\scalefactor}%
  \noindent%
  \begin{minipage}[c]{85\scalefactor}
    \setlength{\unitlength}{\scalefactor}
    \begin{picture}(85,100)(-15,-10)
      \small
      \linethickness{.1pt}
      \put (0,0) {\vector (1,0) {60}}%
      \put (0,0) {\vector (0,1) {80}}%
      \put (61.5,0) {\makebox(0,0)[l]{$k_2$}}%
      \put (1,83) {\makebox(0,0)[c]{$k_3$}}%
      \multiput (10,-1) (10,0) {5} {\line (0,1) {2}}%
      \multiput (-1,10) (0,10) {7} {\line (1,0) {2}}%
      \footnotesize
      \put (0,-1) {\makebox(0,0)[tr]{$0$}}%
      \put (20,-2.5) {\makebox(0,0)[t]{$2$}}%
      \put (50,-2.5) {\makebox(0,0)[t]{$5$}}%
      \put (-1.7,20) {\makebox(0,0)[r]{$2$}}%
      \put (-1.7,70) {\makebox(0,0)[r]{$7$}}%
      \multiput (20,1) (0,2) {35} {\line (0,1) {1}}%
      \multiput (50,1) (0,2) {35} {\line (0,1) {1}}%
      \multiput (1,20) (2,0) {25} {\line (1,0) {1}}%
      \multiput (1,70) (2,0) {25} {\line (1,0) {1}}%
      \linethickness{.5pt}
      \put (0,60) {\line (1,-1) {50}}%
      \multiput (0,60) (10,-10) {6} {\circle 2}%
      \put (20,20) {\circle* 2}%
      \put (20,30) {\circle* 2}%
      \put (30,20) {\circle* 2}%
    \end{picture}%
  \end{minipage}%
  \hspace{15pt}\addtolength{\miniwidth}{-15pt}%
  \begin{minipage}[c]{\miniwidth}
    The minimal 5-tuples counted by $N_B$
    are determined by the points inside the rectangle $2 \le k_2 \le 5$ and $2
    \le k_3 \le 7$ below the line $k_2+k_3=6$. These are marked as filled
    dots. We see that $N_B = 3$.

    \medskip

    The minimal 5-tuples counted by $N_{CM}$
    are given by the points on the line $k_2+k_3=6$ inside the larger
    rectangle $0 \le k_2 \le 5$ and $0 \le k_3 \le 7$. These are marked as
    open dots. We easily count that $N_{CM} = 6$.

    \medskip

    In total we have $N_B + N_{CM} = 9$ different minimal 5-tuples.

  \end{minipage}
\end{example}

\begin{remark}
  In some cases there is only one minimal 5-tuple that specializes to a given
  5-tuple $(a_1,a_2,b_1,b_2,r)$. This happens if and only if the original
  5-tuple has the following property: if $(x,y,z)$ is any of the triplets
  $(a_1,b_1,r)$, $(b_1,r,a_2)$ or $(r,a_2,b_2)$, then either $xyz=0$ or $y \ge
  x+z$. Indeed, each of these triplets have two possible basic reductions,
  given by the vectors $(1,1,0)$ and $(0,1,1)$. If there is a unique minimal
  5-tuple, then also these triplets must have a unique reduced version, and
  this is equivalent to the stated property. Note that this implies that the
  sequence $(a_1,b_1,r,a_2,b_2)$ cannot have 4 neighbouring positive integers.

  In addition to four obvious cases (namely $r=0$, $a_1=a_2=0$, $a_2=b_1=0$
  and $b_1=b_2=0$), this gives us the following three cases: $a_2=0$ and $b_1
  \ge r+a_1$, $b_1=0$ and $a_2 \ge r+b_2$, or $a_1=b_2=0$ and $r \ge b_1+a_2$.
  Example~\ref{ex33} {\rm (a)} belongs to the last case. An example of each of
  the other two main cases is given below.
\end{remark}
 
\begin{example} \label{Wa} {\rm (a)} There is an obstructed curve $C$ in
  $\HH(42,177)_S$
 with minimal resolution
\begin{equation*}
  0 \rightarrow R(-10) \rightarrow R(-11)^{2} \oplus  R(-10)^2
  \oplus  R(-9)^{4} \rightarrow 
  R(-10) \oplus  R(-9)^2 \oplus
  R(-8)^{5} \rightarrow I \rightarrow 0 \,  
  \end{equation*}
  (\cite{W2}, Ex.\;4.2). Since the 5-tuple of $C$ is $(1,0,2,0,1)$, it admits
  two generizations to curves with 5-tuples; $(1,0,1,0,0) \ {\rm and } \
  (0,0,1,0,1)$. 
  These 5-tuples are not minimal. Indeed both curves admit generizations to
  curves with the same 5-tuple $(0,0,0,0,0)$. By Theorem~\ref{Vbetta}, \ $C$
  belongs to a unique component of $\HH(42,177)$. Moreover since all
  generizations above correspond to the removal of ghost terms, they preserve
  postulation. It follows that $(C)$, which is a singular point of $\HH_{
    \gamma}=\HH(42,177)_{ \gamma}$, belongs to a unique component of $\HH_{
    \gamma}$ (or one may use that $ {_0\!\Hom_R}(I(C),M(C)) = 0$ implies
  $\HH_{ \gamma} \cong \HH(42,177)$ at $(C)$, cf.\;Theorem~\ref{gen}, to see
  it). 

  {\rm (b)} If we link the curve of  {\rm (a)} via a CI of type $(8,8)$ we get
  an obstructed curve $D$ 
  with 5-tuple $(0,2,0,1,1)$. The curve $D$ admits two generizations to two
  curves with 5-tuples $(0,1,0,0,1) \ {\rm and } \ (0,1,0,1,0)$,
  and two further generizations to
  curves with the same 5-tuple $(0,0,0,0,0)$. By Theorem~\ref{Vbetta}, \ $D$
  belongs to a unique irreducible components of $\HH(22,57)$. 
 \end{example}  
 \vspace*{-0.17in}

\section{The Hilbert scheme of  curves of diameter at most one}
 \vspace*{-0.07in}
In this section we study 
the open subscheme, $ \HH(d,g;c)$, of $ \HH(d,g)$ whose $k$-points are given by
\begin{equation*}
  \{(C) \in  \HH(d,g) \arrowvert  \  H^1(\sI_{C}(v)) = 0 \ { \rm for \ every \
  } v  \ne c  \} \, ,
  \end{equation*}
  $c$ an integer.
  Our main concern is to determine its singular locus. To do so,
  Theorem~\ref{newmain}, which describe ``all'' generizations of curves in $
  \HH(d,g;c)$, together with the characterization of obstructed curves in
  Corollary~\ref{introth3}, will be the main
  ingredient. 
  Note that Theorem~\ref{Vbetta}, whose proof strongly needed
  Theorem~\ref{newmain}, 
  directly transfers to a theorem for $ \HH(d,g;c)$ with similar statements
  because all components of Theorem~\ref{Vbetta} properly intersect $
  \HH(d,g;c(C))$.
%

  In the following let $C$, $(C) \in \HH(d,g;c)$, be a generic curve of a
  Betti stratum $\HH(\underline \beta)$, and let $\underline \beta_5$ be the
  5-tuple of $C$. We write $\HH(\underline \beta)$ as $\HH(\underline
  \beta_5)$ if the graded Betti numbers that do not belong to $\underline
  \beta_5$ are chosen as small as possible (cf. Corollary~\ref{mainrecor}),
  i.e. so that they satisfy
 \begin{equation} \label{pluss}
 \ \beta_{1,c+3} \cdot (\beta_{2,c+3}-4 \beta_{3,c+4})=0  ,  \
 \beta_{1,i} \cdot \beta_{2,i} = 0 \ \ {\rm for \ } i \notin \{c,c+3,c+4\} \, .
\end{equation}
Note that if ${\overline \HH}(\underline \beta)$, ${\overline \HH}(-)$ the
  closure of $ { \HH}(-)$ in $ \HH(d,g)$, is an irreducible component of $
  \HH(d,g)$, then $\underline \beta$ satisfies \eqref{pluss} by
  Corollary~\ref{cormaxcomp}. Suppose $\HH(\underline \beta) = \HH(\underline
  \beta_5)$, i.e. that $C$ satisfies \eqref{pluss}, and let $V(\underline
  \beta_5)_{B}:= {\overline \HH}(\underline \beta_5) \cap \HH(d,g;c)$. If
  $(C') \in V(\underline \beta_5)_{B}$ then $C$ is a generization of $C'$ in
  $\HH(d,g)$ generated by (PQ) by Theorem~\ref{newmain}, see also \cite{H1},
  Ch.\;II, Ex.\;3.17. 
  Now we denote by \[ {\underline p}_1:=(0,0,1,0,1), \ {\underline
    p_2}:=(0,1,0,0,1), \ {\underline q_c}:=(0,1,0,1,0), \ {\underline
    q_{c+4}}:=(1,0,1,0,0) \] the vectors that correspond to the operations
  (P1), (P2) and (Qj) for $j=c, c+4$ respectively. 
We define
 \begin{equation}
    V(\underline \beta_5 + \underline q_J)_{B} :=
    \begin{cases}
      V(\underline \beta_5 + \underline q_c)_{B} \cup V(\underline
      \beta_5 + \underline q_{c+4})_{B}\, , & \text{if \ diam}\; M(C)=1 \\
      \ \emptyset & \text{if} \ C \ {\rm is  \ ACM} \,. \\
      \end{cases}
  \end{equation}
  Below $+$, resp.\;$*$ \, in an entry of a 5-tuple means a positive,
  resp.\;non-negative integer. Moreover if $V(\underline \beta_5)_B$ is an
  irreducible component of \ $\HH(d,g;c)$, then we denote by $Sing \
  V(\underline \beta_5)_B$ the part of the singular locus of $\HH(d,g;c)$ that
  are contained in $ V(\underline \beta_5)_B$. We get
  \begin{theorem} \label{singloc} With the above notations, suppose
    $V(\underline \beta_5)_B$ is an irreducible component of \
    $\HH(d,g;c)$. 
    Then $\underline \beta_5$ is given as in {\rm (i)-(v)}, and
    \\[2mm]
    {\rm (i)} \hspace{0.2cm} if $\underline \beta_5$ is equal to {\rm
      ($+,0,0,+,*$) \ or \
      ($0,+,+,0,0$), \hspace{0.1cm}} then \\[-2mm]
    \[ Sing \ V(\underline \beta_5)_B = V(\underline \beta_5 + \underline
    p_1)_{B} \cup V(\underline \beta_5 + \underline p_{2})_{B} \cup
    V(\underline \beta_5 + \underline q_J)_{B},
    \]
    \\[-2mm]
    {\rm (ii)} \hspace{0.2cm} if {\rm $\underline \beta_5 = \ $($0,0,0,+,*$) \
      or \ ($0,0,+,*,0$)}, \ then \ $ Sing \ V(\underline \beta_5)_B =
    V(\underline \beta_5 + \underline p_{2})_{B} \cup V(\underline \beta_5 +
    \underline q_J)_{B}$,
    \\[2mm]
    {\rm (iii)} \hspace{0.05cm} if {\rm $\underline \beta_5 = \ $($+,0,0,0,*$)
      \ or \ ($*,+,0,0,0$),} \ then \ $ Sing \ V(\underline \beta_5)_B =
    V(\underline \beta_5 + \underline p_1)_{B} \cup V(\underline \beta_5 +
    \underline q_J)_{B}$,
    \\[2mm]
    {\rm (iv)} \hspace{0.1cm} if {\rm $\underline \beta_5 = \ $($0,0,0,0,+$)},
    \ then $ \ Sing \ V(\underline \beta_5)_B = V(\underline \beta_5 +
    \underline p_1+\underline p_{2})_{B} \cup V(\underline \beta_5 +
    \underline q_J)_{B}$,
    \\[2mm]
    {\rm (v)} \hspace{0.2cm} if {\rm $\underline \beta_5 = \ $($0,0,0,0,0$)},
    \ \ then \hspace{7mm} $ \ \ \ Sing \ V(\underline \beta_5)_B \ \ = \ \
    V(\underline
    \beta_5 + \underline p_1+\underline p_{2})_{B} \ \cup \\[3mm]
    {} \hspace{12mm}V(\underline \beta_5 + \underline p_1 + \underline
    q_c)_{B} \cup V(\underline \beta_5 + \underline p_1 + \underline
    q_{c+4})_{B} \cup V(\underline \beta_5 + \underline p_2 + \underline
    q_c)_{B} \cup V(\underline \beta_5 + \underline p_2 + \underline
    q_{c+4})_{B}\, .$
    \\[1mm]
     \end{theorem}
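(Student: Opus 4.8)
The plan is to reduce the statement to the combinatorics of $5$-tuples under the four reduction vectors, using two facts already in hand: the characterization of singular points for curves of diameter at most one, and the description of Betti-stratum closures. First I would record that $Sing\,V(\underline\beta_5)_B$ is, by its very definition, the set of obstructed curves lying in $V(\underline\beta_5)_B$: an unobstructed curve is a smooth point of $\HH(d,g)$, hence of the open subscheme $\HH(d,g;c)$, while an obstructed curve is singular. Since ACM curves are always unobstructed (Remark~\ref{rem5tupleobstr}), only diameter-one curves contribute, and for those Corollary~\ref{introth3} says that $(a_1,a_2,b_1,b_2,r)$ with $r>0$ is obstructed precisely when $a_2b_1\neq0$, $a_1b_1\neq0$ or $a_2b_2\neq0$. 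The classification of $\underline\beta_5$ into the cases (i)-(v) is then just the enumeration by positivity pattern of the two shapes of minimal $5$-tuple furnished by Corollary~\ref{cormaxcomp}.

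Next I would fix the poset picture. By Theorem~\ref{newmain} a curve $C'$ lies in $V(\underline\beta_5)_B$ exactly when the generic curve of the component, carrying the minimal $5$-tuple $\underline\beta_5$, is a generization of $C'$ generated by $({\rm PQ})$; equivalently $\underline\beta(C')_5=\underline\beta_5+k_1\underline q_{c+4}+k_2\underline p_1+k_3\underline p_2+k_4\underline q_c$ for some $k_i\ge0$. Hence the Betti strata inside $V(\underline\beta_5)_B$ are indexed by these reachable $5$-tuples, with $V(\underline\gamma_5)_B\subseteq V(\underline\delta_5)_B$ whenever $\underline\gamma_5$ dominates $\underline\delta_5$ in this ordering, so $Sing\,V(\underline\beta_5)_B$ equals the union of the closures of the minimal reachable strata that are obstructed. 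Obstruction propagates upward—if a minimal tuple satisfies one inequality and has $r>0$, then so does every reachable tuple above it—so these closures consist entirely of obstructed curves. The problem thus becomes the arithmetic one of locating, for each of (i)-(v), the minimal reachable tuples meeting the obstruction conditions together with $r>0$.

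Then I would run the case analysis, reading the inequalities off the coefficients $k_i$ through $\underline\beta_5+k_1\underline q_{c+4}+k_2\underline p_1+k_3\underline p_2+k_4\underline q_c=(a_1+k_1,\,a_2+k_3+k_4,\,b_1+k_1+k_2,\,b_2+k_4,\,r+k_2+k_3)$. When $\underline\beta_5$ has $r=0$ (an ACM component) the $\underline q_J$-additions keep $r=0$ and stay unobstructed, which is exactly why $V(\cdot+\underline q_J)_B$ is declared empty there, whereas the $\underline p_i$ raise $r$ to $1$. In cases (i)-(iii) a positive entry of $\underline\beta_5$ already furnishes one factor of a product, so a single reduction supplying the missing factor while keeping $r>0$ is minimal and obstructed, and which of $\underline p_1,\underline p_2$ qualifies is dictated by positivity: e.g.\ $\underline p_1$, which raises $b_1$, obstructs only when some $a$ is positive, explaining its presence in (i) and (iii) and its absence in (ii). Case (iv) has all of $a_1,a_2,b_1,b_2$ zero, so no single $\underline p_i$ obstructs and the minimal $\underline p$-obstruction is $\underline p_1+\underline p_2$ (giving $a_2b_1\neq0$), while each $\underline q_J$ obstructs alone; and in the ACM component (v) no single operation both makes $r>0$ and creates a product, so a short check shows the minimal obstructed tuples are exactly the five listed pairs.

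The main obstacle will be the completeness half—that the closures of these minimal strata exhaust $Sing\,V(\underline\beta_5)_B$, i.e.\ that every obstructed reachable tuple dominates one of them. I would settle this by translating each obstruction inequality together with $r>0$ into inequalities among $k_1,\dots,k_4$ and reading off a dominated minimal vector in each branch; for instance in (v) an active $a_1b_1\neq0$ forces $k_1\ge1$, and since $r>0$ gives $k_2+k_3\ge1$ one obtains domination of $\underline p_1+\underline q_{c+4}$ when $k_2\ge1$ and of $\underline p_2+\underline q_{c+4}$ when $k_3\ge1$, with the remaining branches handled symmetrically. The point demanding care is that these additions keep us inside the closure of this very component rather than slipping into another: this is precisely what Theorem~\ref{newmain} provides, with Proposition~\ref{BettiStrata} ensuring that each reachable $5$-tuple has a well-defined irreducible stratum whose generic curve realizes the asserted generization. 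Assembling the minimal obstructed strata together with their closures then delivers the displayed unions in (i)-(v), completing the identification of $Sing\,V(\underline\beta_5)_B$.
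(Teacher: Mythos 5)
Your overall strategy is the same as the paper's: identify $Sing\, V(\underline\beta_5)_B$ with the locus of obstructed curves (Corollary~\ref{introth3}, Remark~\ref{rem5tupleobstr}), use Theorem~\ref{newmain} together with \eqref{pluss} to encode a curve's position in the component by a reachable $5$-tuple $\underline\beta_5+k_1\underline q_{c+4}+k_2\underline p_1+k_3\underline p_2+k_4\underline q_c$, and finish by combinatorics on the $k_i$. One caveat on your second paragraph: membership in $V(\underline\beta_5)_B$ only \emph{implies} that the $5$-tuple is reachable; the converse would also require compatibility of the Betti numbers outside degrees $c,\dots,c+4$, so your ``exactly when'' is an overstatement, though only the forward implication is really used.

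The genuine gap is in your completeness step. The claim that every obstructed reachable tuple dominates one of the listed minimal strata is \emph{false} in cases (ii) and (iii), precisely when the component is ACM so that $V(\underline\beta_5+\underline q_J)_B=\emptyset$ by definition. Take case (iii) with $\underline\beta_5=(a_1,0,0,0,0)$, $a_1>0$: the reachable tuple $\underline\beta_5+\underline p_2+\underline q_{c+4}=(a_1+1,1,1,0,1)$ is obstructed (it has $\beta_{1,c}\cdot\beta_{2,c+4}=1\neq 0$ and $r'=1>0$), yet it does not dominate the only listed stratum $\underline\beta_5+\underline p_1=(a_1,0,1,0,1)$, since the difference $(1,1,0,0,0)$ is not a nonnegative combination of the four (linearly independent) vectors. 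Symmetrically, in case (ii) with $\underline\beta_5=(0,0,0,b_2,0)$, the obstructed tuple $\underline\beta_5+\underline p_1+\underline q_{c+4}=(1,0,2,b_2,1)$ fails to dominate $\underline\beta_5+\underline p_2$. So the ``short check'' you defer to cannot close these branches: your argument proves (i), (iv), (v) and the diameter-one subcases of (ii)--(iii), but not their ACM subcases. Note that the paper runs the converse inclusion differently: assuming $C'$ lies outside the listed union, the reordering argument shows the corresponding operations cannot occur in the (PQ)-chain from $C'$ to the generic curve, and one then checks that the \emph{remaining} reachable tuples are unobstructed. That exclusion argument is complete for case (i) (where all the relevant one-vector strata are listed) and for (iv)--(v), and there it is equivalent to your domination claim; but in the ACM subcases of (ii)--(iii) it meets exactly the same rogue branches (chains using $\underline p_1$ together with some $\underline q_j$, resp.\ $\underline p_2$ together with some $\underline q_j$), a point the paper compresses into ``the other cases are proven similarly''. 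To complete your proposal you would have to show that curves realizing these mixed tuples never occur inside such a component --- a geometric statement, not a combinatorial one --- or treat these cases by a separate argument.
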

     \begin{proof} 
       It is easily checked that the minimal 5-tuples are of the form
       (i)-(v). Now let $C$ be a generic curve of $V(\underline
       \beta_5)_{B}$.

       (i) A generic curve $\tilde C$ of a non-empty $V(\underline \beta_5 +
       \underline p_1)_{B}$ has 5-tuple without consecutive 0's in its first 4
       entries, whence $\tilde C$ is obstructed by
       Remark~\ref{rem5tupleobstr}. The same argument, using
       Remark~\ref{rem5tupleobstr}, holds for $V(\underline \beta_5 +
       \underline p_2)_{B}$. If $C$ is not ACM, the argument also holds for
       the generic curve $\tilde C$ of $V(\underline \beta_5 + \underline
       q_i)_{B}$, $i=c$ and $c+4$. Since $C$ 
       is a generization of  $\tilde C$,
       it follows that $(\tilde C)$ belongs to the closure of $\HH(\underline
       \beta_5)$ in $\HH(d,g)$, 
       i.e. that $(\tilde C) \in V(\underline \beta_5)_B$ and we get
       \[ Sing \ V(\underline \beta_5)_B \supseteq V(\underline \beta_5 +
       \underline p_1)_{B} \cup V(\underline \beta_5 + \underline
       p_{2})_{B} \cup V(\underline \beta_5 + \underline q_J)_{B}.
    \]
     (also in the case some of the Betti strata to the right are empty).

    Conversely suppose a curve $C'$ of $ V(\underline \beta_5)_B$ is not in
    the union of the $V$-sets above. If the generic curve $ C$ of
    $V(\underline \beta_5)_{B}$ is not ACM, then $C$ is by
    Theorem~\ref{newmain} a generization of $C'$ in $\HH(d,g)$ generated by
    (PQ) without using (P1), (P2), nor (Qi) for $i=c$ and $c+4$. This follows
    from the fact that we can change the order in which we use (Pj) and (Qi).
    Indeed if e.g. (P2) is used, then $\underline \beta_5 + \underline p_2$
    must specialize to the 5-tuple of $C'$ which implies that $(C')$ belongs
    to the closure of $\HH(\underline \beta_5 + \underline p_2)$
    and we get a
    contradiction. 
    Thus $ C$ is a trivial generization of $C'$, which implies that $C'$ has
    exactly the same 5-tuple as $C$. It follows that $C'$ is unobstructed.

    If $C$ is ACM, then $C$ is a generization of $C'$ in
    $\HH(d,g)$ generated by (PQ) without using (P1) nor (P2), i.e. only
    generizations given by (Qi) are used. Then $C'$ is ACM and hence
    unobstructed. This proves (i).

    The other cases (ii)-(v) are proven similarly, and we get the theorem.
\end{proof}

Finally we remark that we can find the dimension of the singularities given in
Theorem~\ref{singloc} in some cases. Indeed let $ {\HH}(\underline \beta_5)
\subseteq {\HH}_{\gamma,\rho}$ be a Betti stratum with generic curve $C$, $(C)
\in \HH(d,g;c)$, and let $C'$ be a generic curve of ${\HH}_{\gamma,\rho}$
satisfying \eqref{pluss} by Theorem~\ref{mainres}. Then $C'$ is a generization
of $C$ in ${\HH}(d,g)$ without using (P1) and (P2). Indeed (P1) and (P2)
change $\rho$. It follows that $C'$ is a generization of $C$ generated by
${\rm (Q_J)}$, $J= \{c,c+4\}$. Suppose $\underline \beta_5=\underline
\beta_5(C)$ is of the form
 \begin{equation} \label{formbeta}
\underline \beta_5 = (0,
\beta_{1,c},\beta_{2,c+4},0,\beta_{3,c+4}).
\end{equation}
Then neither (Qc) nor (Q(c$\,+\,4$)) are used, i.e. $C'$ is a trivial
generization of $C$ and $ (C') \in {\HH}(\underline \beta_5)$. It follows that
$ V(\underline \beta_5)_B = \overline{\HH}_{\gamma,\rho} \cap \HH(d,g;c)$. 
Since $\dim {\HH}_{\gamma,\rho}$ is known (\cite{krao}, Rem.\;2.3, first
proved in \cite{MDP1}, Thm.\;3.8, p.\;171), we can compute the dimension of
the singularities $ V(\underline \beta_5 + a \underline p_1 +b \underline
p_2)_{B}$ for $a,b \in \{0,1\}$, of Theorem~\ref{singloc} 
because their generic curves satisfy
\eqref{formbeta}:

\begin{example} \label{exsingloc} {\rm (a)} The singularity ``$(0,1,1,0,2)$''
  of Example~\ref{ex33} (a) belongs to a unique irreducible component of
  $\HH(32,109)_S$ with 5-tuple $(0,0,0,0,0)$.
  The  codimension of the singularity, i.e.   $\dim V(0,0,0,0,0)_{B}- \dim V(0,1,1,0,2)_{B}$, is\, $3$.  \\[2mm]
  {\rm (b)} By \cite{krao}, Ex.\, 3.12, there exists a singularity
  ``$(0,1,1,0,r)$'' belonging to a unique irreducible component of
  $\HH(d,g)_S$ for any $r \ge 2$, and the
  codimension of the singularity is $2r-1$.  \\[2mm]
  {\rm (c)} 
  The singularity of Example~\ref{ex33} (c) sits in the intersection of two
  irreducible components of $\HH(18,39)_S$,
  and 
  the codimension of the singularity in each of its components is $1$ (cf.
  \cite{Se} and \cite{EF}).
\end{example}

\bigskip \bigskip


\end{document}